\documentclass[a4paper]{article}
\usepackage{amsthm,amssymb,amsmath,enumerate,graphicx,epsf}
\usepackage{bbold}
\usepackage{mathtools,enumitem}
\usepackage{authblk}
\usepackage[percent]{overpic}
\usepackage{hyperref}
\RequirePackage{latexsym} \RequirePackage{amsthm}
\RequirePackage{amsmath}
\usepackage{authblk}
\usepackage{tikz}
\usepackage[dvipsnames]{xcolor}

\usepackage{enumerate}
\RequirePackage{amssymb} \RequirePackage{makeidx}
\usepackage{dsfont}
\usepackage{mathrsfs}
\usepackage[noabbrev,capitalise,nameinlink]{cleveref}

\newcommand{\COLORON}{1}
\newcommand{\NOTESON}{0}
\newcommand{\Debug}{0} 

\usepackage{amsthm,amssymb,amsmath,bbm,enumerate,graphicx,epsf,stmaryrd,accents}

\usepackage{authblk}

\hyphenation{com-pac-ti-fi-cation}

\newcommand{\comment}[1]{}
\newcommand{\COMMENT}[1]{}

\definecolor{darkgray}{rgb}{0.3,0.3,0.3}
\newcommand{\defi}[1]{{\color{darkgray}\emph{#1}}}



\comment{
	\begin{lemma}\label{}	
\end{lemma}
\begin{proof}

\end{proof}

\begin{theorem}\label{}
\end{theorem} 
\begin{proof} 	

\end{proof}

}



\newtheorem{proposition}{Proposition}[section]

\newtheorem{theorem}[proposition]{Theorem}
\newtheorem{corollary}[proposition]{Corollary}

\newtheorem{lemma}[proposition]{Lemma}

\newtheorem{conjecture}[proposition]{Conjecture}

\newtheorem{problem}[proposition]
{{Problem}}

\newtheorem{question}[proposition]{{Question}}

\newtheorem{examp}[proposition]{Example}



\newcommand{\FIG}{0}

\ifnum \NOTESON = 1 \newcommand{\note}[1]{ 

\hspace*{-30pt}
	{\color{blue}  NOTE: \color{Turquoise}{\small  \tt \begin{minipage}[c]{1.1\textwidth}  #1 \end{minipage} \ignorespacesafterend }} 
	
	}
\else \newcommand{\note}[1]{} \fi

\newcommand{\afsubm}[1]{ \ifnum \Debug = 1 {\mymargin{#1}}
\fi} 

\ifnum \Debug = 1 
\else  \fi

\ifnum \FIG = 1 
\else  \fi

\ifnum \FIG = 1 
\else  \fi

\ifnum \Debug = 1 \usepackage[notref,notcite]{showkeys}
\fi

\ifnum \COLORON = 0 \renewcommand{\color}[1]{}
\fi



\newcommand{\N}{\ensuremath{\mathbb N}}
\newcommand{\R}{\ensuremath{\mathbb R}}

\newcommand{\ca}{\ensuremath{\mathcal A}}
\newcommand{\cb}{\ensuremath{\mathcal B}}
\newcommand{\cc}{\ensuremath{\mathcal C}}
\newcommand{\cd}{\ensuremath{\mathcal D}}
\newcommand{\ce}{\ensuremath{\mathcal E}}

\newcommand{\ch}{\ensuremath{\mathcal H}}

\newcommand{\cj}{\ensuremath{\mathcal J}}

\newcommand{\cp}{\ensuremath{\mathcal P}}
\newcommand{\cq}{\ensuremath{\mathcal Q}}

\newcommand{\cgr}{\ensuremath{\mathcal R}}

\newcommand{\cu}{\ensuremath{\mathcal U}}



\makeatletter
\DeclareRobustCommand{\cev}[1]{%
  \mathpalette\do@cev{#1}%
}
\newcommand{\do@cev}[2]{%
  \fix@cev{#1}{+}%
  \reflectbox{$\m@th#1\vec{\reflectbox{$\fix@cev{#1}{-}\m@th#1#2\fix@cev{#1}{+}$}}$}%
  \fix@cev{#1}{-}%
}
\newcommand{\fix@cev}[2]{%
  \ifx#1\displaystyle
    \mkern#23mu
  \else
    \ifx#1\textstyle
      \mkern#23mu
    \else
      \ifx#1\scriptstyle
        \mkern#22mu
      \else
        \mkern#22mu
      \fi
    \fi
  \fi
}

\makeatother


 

\newcommand{\g}{\ensuremath{G\ }}
\newcommand{\G}{\ensuremath{G}}








\newcommand{\Lr}[1]{Lemma~\ref{#1}}

\newcommand{\Tr}[1]{Theorem~\ref{#1}}

\newcommand{\Sr}[1]{Section~\ref{#1}}

\newcommand{\Cr}[1]{Corollary~\ref{#1}}
\newcommand{\Cnr}[1]{Con\-jecture~\ref{#1}}




\newcommand{\fe}{for every}

\newcommand{\st}{such that}






\newcommand{\labtequtag}[3]{%
 \begin{equation} \label{#1} 	\begin{minipage}[c]{0.9\textwidth}  #2 \end{minipage} \ignorespacesafterend \tag{#3} \end{equation} }

\newcommand{\mymargin}[1]{
 \ifnum \Debug = 1
  \marginpar{%
    \begin{minipage}{\marginparwidth}\small%
      \begin{flushleft}%
        {\color{blue}#1}%
      \end{flushleft}%
   \end{minipage}%
  }%
 \fi
}%

\newcommand{\extras}[1]{
 \ifnum \Debug = 1
\section{Extras} #1
 \fi
}%

\newcommand{\mySection}[2]{}













\newcommand{\diam}{\mathrm{diam}}

\newcommand{\ncm}{near-component}






\DeclareMathOperator{\dist}{dist}

\begin{document}

\title{Excluding $K_{2,t}$ as a fat minor}

\author[1]{Sandra Albrechtsen\thanks{Supported by the Alexander von Humboldt Foundation in the framework of the Alexander von Humboldt Professorship of Daniel Král' endowed by the Federal Ministry of Education and Research.}}
\affil[1]{{Institute of Mathematics}\\ {Leipzig University}\\ {Augustusplatz 10}\\ {04109 Leipzig}\\ {Germany}}
\author[2]{Marc Distel\thanks{Supported by Australian Government Research Training Program Scholarship.}}
\affil[2]{{School of Mathematics}\\ {Monash University}\\ {Melbourne}\\ {Australia}}
\author[3]{Agelos Georgakopoulos\thanks{Supported by EPSRC grant  EP/V009044/1.}}
\affil[3]{{Mathematics Institute}\\ {University of Warwick}\\  {CV4 7AL, UK}}

\date{October 17, 2025}
\maketitle

\begin{abstract}
We prove that for every $t \in \N$, the graph $K_{2,t}$ satisfies the fat minor conjecture of Georgakopoulos and Papasoglu:  \fe\ $K\in \N$ there exist $M,A\in \N$ such that every graph with no $K$-fat $K_{2,t}$ minor is $(M,A)$-quasi-isometric to a graph with no $K_{2,t}$ minor. We use this to obtain an efficient algorithm for approximating the minimal multiplicative distortion of any embedding of a finite graph into a $K_{2,t}$-minor-free graph, answering a question of Chepoi, Dragan, Newman, Rabinovich, and Vax\`es from 2012.
\end{abstract}

{\bf{Keywords:} } coarse graph theory, quasi-isometry, asymptotic minor,\\ approximation algorithm.\\

{\bf{MSC 2020 Classification:}} 05C83, 05C10, 68R12, 05C63, 51F30.

\maketitle

\section{Introduction}
Coarse graph theory is a rapidly developing new area that studies graphs from a geometric perspective, and conversely, transfers graph-theoretic results to metric spaces. The focus is on large-scale properties of the graphs and spaces involved, in particular on properties that are stable under quasi-isometries (defined in \Sr{sec QI}). A central notion of this area is that of a \defi{$K$-fat minor}, a geometric analogue of the classical notion of graph minor whereby branch sets are required to be at distance at least some distance $K$ from each other, and the edges connecting them are replaced by long paths, also at distance  $K$ from each other, and from their non-incident branch sets; see \Sr{sec FM} for details. We say that a graph $J$ is an \defi{asymptotic minor} of a graph $G$, if $J$ is a $K$-fat minor of $G$ \fe\ $K\in \N$. For any fixed $J$, this property is easily seen to be invariant under quasi-isometry on $G$ (\cite[Observation~2.4]{GeoPapMin}). 

Much of the impetus of coarse graph theory is due to the following conjecture of \cite{GeoPapMin}:

\begin{conjecture}[\cite{GeoPapMin}] \label{conj fat min}
For every finite graph $J$ and every $K\in \N$ there exist $M,A\in \N$  such that every graph with no $K$-fat $J$~minor is $(M,A)$-quasi-isometric to a graph with no $J$~minor.
\end{conjecture}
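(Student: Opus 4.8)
The plan is to prove \Cnr{conj fat min} by establishing a coarse analogue of the \RST\ and then using it to assemble, piece by piece, a $J$-minor-free graph quasi-isometric to the given~$G$. Concretely, Step~1 is to prove a \emph{coarse structure theorem}: for every finite graph $J$ and every $K$ there is $W = W(J,K)$ so that every graph $G$ with no $K$-fat $J$~minor has a tree-decomposition $(T,\cv)$ of adhesion at most $W$ whose every torso is, after deleting at most $W$ vertices (``coarse apices''), $(W,W)$-quasi-isometric to a graph embedded in a surface of Euler genus at most $W$ together with at most $W$ vortices each of width at most $W$. This is the exact coarse shadow of Robertson--Seymour's almost-embeddability description of $J$-minor-free graphs, and, granting it, \Cnr{conj fat min} follows by constructing the target graph $G'$ top-down along $T$.

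Step~2 supplies the building blocks. For a single torso one needs a \emph{coarse surface-embedding theorem}: a graph quasi-isometric to a metric subspace of a bounded-genus surface-with-vortices is itself quasi-isometric to a graph genuinely embedded in that surface with those vortices. The planar instances of this are precisely the coarse planarity statements behind the fat-$K_5$ and fat-$K_{3,3}$ pictures, and one would want to prove the higher-genus version by cutting the surface along a bounded system of curves into a planar piece and iterating. Reattaching the $\le W$ coarse apices changes distances by a bounded additive amount only, so each torso is replaced by a $J'$-minor-free graph for a suitable $J'$ depending on $J$ and $W$. Step~3 is the assembly: glue the local models along $T$ in a way compatible with the adhesion sets, so that the global graph inherits the clique-sum structure by which the \RST\ certifies the absence of a $J$~minor, and check that the local quasi-isometries patch to an $(M,A)$-quasi-isometry $G \to G'$ with $M,A$ depending only on $J$ and $K$.

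The present paper is the base of this program: it treats the first case in which $J$-minor-free graphs genuinely leave the bounded-treewidth world, namely $J = K_{2,t}$, whose minor-free graphs split along small separators into near-outerplanar parts. The tools developed here --- coarsely contracting pendant subtrees and low-order bridges, extracting a coarse cycle-or-tree skeleton, and certifying near-outerplanarity by a metric obstruction --- are exactly the ``straighten one torso'' subroutine that the general program needs, now in the simplest non-trivial surface (the disc). For bounded genus one would feed these through a coarse version of the two-disjoint-paths and flat-wall theorems, locating a large near-flat patch whenever the fat-minor obstruction is absent and peeling it off; but that step is not yet available.

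The main obstacle is Step~1, the coarse structure theorem, and it is wide open even for cliques: whether excluding $K_t$ as a fat minor forces a tree-like decomposition into near-embeddable pieces is unknown, and a proof would seem to require a coarse grid/wall theorem well beyond the currently known coarse grid-type results (which, like the finiteness of $\asdim$ for fat-minor-excluding classes, control only the low-complexity end). A second obstacle survives even granting Step~1: the tree-decomposition may be infinite with parts of unbounded size, so one cannot choose the decomposition and then the local quasi-isometries separately --- they must be produced uniformly so that multiplicative and additive errors do not accumulate along $T$. Controlling precisely this accumulation is already the delicate heart of the $K_{2,t}$ argument, and is where the general case will most plausibly demand genuinely new ideas.
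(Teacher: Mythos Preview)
The statement you are attempting to prove is a \emph{conjecture}, not a theorem, and the paper explicitly records that it has been \emph{disproven}: Davies, Hickingbotham, Illingworth and McCarty \cite{DHIM} constructed counterexamples, and the companion paper \cite{ADGSmallCounterexamples} shows it already fails for $J=K_t$ with $t\ge 6$ and for $J=K_{s,t}$ with $s,t\ge 4$. Moreover, even the weakening in which one is allowed to forbid a possibly larger graph $J'$ as a minor in the target has been refuted by Albrechtsen and Davies \cite{ADWeakCounterex}. So no proof strategy can succeed; the task is to understand \emph{for which $J$} the statement survives, and the paper contributes the case $J=K_{2,t}$.

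Your specific program fails at Step~1, and the counterexamples explain why. A ``coarse structure theorem'' of the shape you propose would in particular yield the conjecture (or at least its $J'$-weakening) for every clique $J=K_t$, contradicting the results just cited. So a coarse Robertson--Seymour decomposition with torsos that are each quasi-isometric to bounded-genus-plus-vortices graphs simply cannot hold for all graphs excluding a fat $K_t$ minor. Your Step~2 is likewise too optimistic: the ``coarse Kuratowski'' question (the $K_{3,3}$ case) is singled out in the paper as open, not available as an input. Finally, your description of what ``the present paper'' does is inaccurate: there are no pendant-subtree contractions, bridge reductions, or outerplanarity certificates here. The actual argument builds a rooted $H$-partition of $G$ layer by layer, with each bag a ball in $G-G^{i-1}$ above a carefully merged piece of the boundary, equipped with a buffer zone so that any $2$-connected subgraph of $H$ can be inflated back to a $K$-fat minor of $G$ (Lemmas~\ref{lem:GPwithNiceProperties} and~\ref{lem:ObtainingaGPwithNiceProperties}).
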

In other words, the conjecture asks whether every graph (family) forbidding $J$ as an asymptotic minor is (uniformly) quasi-isometric with a graph (family) forbidding $J$ as a minor. This was a natural conjecture to make, as the converse is easily seen to be true. However, \Cnr{conj fat min} was disproven by Davies, Hickingbotham, Illingworth and McCarty \cite{DHIM}. In a companion paper \cite{ADGSmallCounterexamples} we will provide much smaller counterexamples; in particular, we will prove that it is false for $J=K_t, t\geq 6$, and for $K_{s,t}, s,t\geq 4$. 
Recently, Albrechtsen and Davies~\cite{ADWeakCounterex} also disproved a weaker version of Conjecture~\ref{conj fat min}, stated in \cite{DHIM}, postulating a quasi-isometry to a graph forbidding some possibly much larger graph $J'$ as a minor.

This negative answer to \Cnr{conj fat min} fuels the interest in the broader quest, already initiated by Bonamy et al.\ \cite{BBEGLPS}, to understand graphs (or graph families) with a forbidden asymptotic minor. A substantial aspect of this quest, motivating the current paper, is to understand the limits of the validity of the conjecture. Several positive results have been obtained so far: \Cnr{conj fat min} is true e.g.\ for $J = K_3$ (more generally, for any cycle $J$) \cite{GeoPapMin}, for $J = K_{1,t}$ \cite{GeoPapMin,NgScSeAsyII}, for $J = K^-_4$ \cite{FujPapCoa, AJKW}, $J=K_{2,3}$ \cite{CDNRV,FujPapCoa},  and $J = K_4$ \cite{AJKW}. An important open question, due to its connection with induced minors,  is whether \Cnr{conj fat min} is true for $K=2$.

Given the above results, a central outstanding case towards understanding which graphs satisfy Conjecture~\ref{conj fat min} is the case $J= K_{2,t}, t\geq 4$. This question is implicit in earlier work of Chepoi, Dragan, Newman, Rabinovich and Vax\`es \cite{CDNRV}, where a variant of the notion of fat minor is introduced. The aim of this paper is to settle this question in the affirmative; we prove

\begin{theorem} \label{thm:FatK2t}
    For every $t \in \N$ there exists a function $f: \N \rightarrow \N^2$ such that every graph with no $K$-fat $K_{2,t}$ minor is $f(K)$-quasi-isometric to a graph with no $K_{2,t}$ minor. 
\end{theorem}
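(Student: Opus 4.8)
\medskip
\noindent\emph{Proof strategy.} The plan is to transport, to the coarse setting, the classical structure theory of $K_{2,t}$-minor-free graphs, and then to read off a $K_{2,t}$-minor-free model from the resulting coarse structure. Here $t$ is fixed throughout, and all constants below are allowed to depend on $t$ -- the content is that the final $f$ depends on the fatness parameter $K$ only. Recall two features of $K_{2,t}$-minor-free graphs. First, a large grid has a $K_{2,t}$ minor, so by the excluded-grid theorem such graphs have treewidth bounded by a function of $t$. Second, and more precisely, each such graph admits a tree-decomposition of bounded adhesion and bounded width in which \emph{parallelism is bounded}: for every adhesion set $S$ there are at most $t-1$ ``parallel'' pieces hanging off $S$, i.e.\ pieces attached to $S$ along two or more of its vertices by internally disjoint paths -- informally, the graph is a tree of ``thin'' pieces with at most $t-1$ parallel strands. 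Conversely, any graph with such a decomposition is $K_{2,t}$-minor-free: since $K_{2,t}$ is $2$-connected, a $K_{2,t}$ minor would have to force $t$-fold parallelism at some separator. This converse is what will certify that our model graph is $K_{2,t}$-minor-free.

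\medskip
\noindent\emph{The coarse toolkit.} To run this coarsely I would develop two ingredients, each with bounds depending only on $K$ and $t$. (1) A \emph{coarse separation / Menger statement}: if a graph contains two $K$-fat-separated ``fat regions'' joined by $t$ pairwise $K$-fat-separated wide paths, then it contains a $K'$-fat $K_{2,t}$ minor; contrapositively, in a graph with no $K$-fat $K_{2,t}$ minor, suitably separated balls are joined by at most $t-1$ coarsely disjoint wide paths, which -- iterated -- yields a \emph{coarse tree-decomposition} of $G$: a tree $T$, subsets $V_u\subseteq V(G)$ for $u\in V(T)$ covering $V(G)$, with the overlaps along edges of $T$ ``coarsely small'' and with coarse parallelism at most $t-1$ at each node. (2) A \emph{coarse width statement}: since a large grid has a $K_{2,t}$ minor, $G$ has no $K$-fat large-grid minor, so by a coarse excluded-grid argument each coarse piece $G[V_u]$ is $(M_1,A_1)$-quasi-isometric to a graph of bounded treewidth; combined with (1) applied inside the piece, $G[V_u]$ is $(M_1,A_1)$-quasi-isometric to an honest ``thin'' $K_{2,t}$-minor-free piece $H_u$.

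\medskip
\noindent\emph{The model and the quasi-isometry.} Choose for each $u$ such an $H_u$ and a quasi-isometry $\phi_u\colon G[V_u]\to H_u$, arranged so that $\phi_u$ maps each overlap $V_u\cap V_{u'}$ into a bounded-diameter subset of $H_u$. Contract those subsets to single vertices (a perturbation bounded in terms of $K,t$) and glue the $H_u$ along $T$ by identifying the contracted images of common overlaps; the result is a graph $H$ with a genuine tree-decomposition of bounded adhesion and bounded width whose parallelism inherits the bound $t-1$, so by the converse recalled above $H$ has no $K_{2,t}$ minor. Finally, gluing the $\phi_u$ along $T$ gives a map $G\to H$; since the overlaps are coarsely small and the $\phi_u$ agree on them up to bounded error, this map is an $(M_2,A_2)$-quasi-isometry with $M_2,A_2$ depending only on $K$ and the fixed $t$, and we set $f(K)=(M_2,A_2)$.

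\medskip
\noindent\emph{The main obstacle.} The crux is the second step. Coarse analogues of Menger's theorem are known to fail in general, so to get ingredient (1) in exactly the form needed one must exploit the special geometry of $K_{2,t}$ -- its two-vertex cover, which forces every ``$B_i$'' part of a $K_{2,t}$ minor to be a path between two fixed branch sets -- rather than argue abstractly. The second difficulty is promoting the coarse data (``coarsely disjoint'', ``coarsely small overlaps'', ``coarse parallelism $\le t-1$'') into a genuine tree-decomposition of $H$ with the parallelism bound intact: this calls for a coarse analogue of the torso/separation machinery to extract an actual tree $T$ rather than a mere covering by overlapping pieces, together with bookkeeping that keeps the final $f$ a function of $K$ alone. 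Verifying that the $\phi_u$ can be chosen compatibly on overlaps, and that the contract-and-glue step neither creates a $K_{2,t}$ minor nor spoils the quasi-isometry constants, is the remaining delicate but routine part.
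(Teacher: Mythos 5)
Your proposal is a strategy outline rather than a proof: both of the ``coarse toolkit'' ingredients you rely on are left unestablished, and at least one of them cannot be obtained by the route you describe. Ingredient (1) asks for a coarse Menger/separation statement yielding a coarse tree-decomposition with coarse parallelism at most $t-1$. You yourself note that coarse Menger fails in general (indeed it has been disproven even in weak forms), and you offer no actual argument exploiting the geometry of $K_{2,t}$ to recover the statement you need; this is precisely the hard content of the theorem, not a lemma one can cite. Ingredient (2) --- that excluding a fat grid minor makes each piece quasi-isometric to a bounded-treewidth graph, uniformly in $K$ --- is likewise not an available black box; whether forbidding fat minors yields quasi-isometry to the corresponding minor-closed class is exactly the (now refuted in general) fat minor conjecture, so any such step must be proved for the specific graphs at hand. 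Finally, the contract-and-glue step, which you call ``routine'', is where one must verify that the assembled graph $H$ really has no $K_{2,t}$ minor and that the glued maps form a quasi-isometry with constants depending only on $K$; without a concrete construction of the pieces and overlaps there is nothing to verify.

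For comparison, the paper's proof does not pass through tree-decompositions or excluded grids at all. It builds an $H$-partition of $G$ directly, layer by layer from a BFS root, using two ``find a fat $\Theta_t$ minor or else'' lemmas to bound the number and diameter of near-components of boundaries, and a merging lemma to force distinct bags (plus balls around them) to be far apart. The decisive structural point is that the resulting partition satisfies conditions under which \emph{every $2$-connected subgraph of $H$ is a $K$-fat minor of $G$} --- proved via the ``undergrowth'' and ``buffer zone'' devices that make branch sets connected and well separated. Since $K_{2,t}$ is $2$-connected, $H$ is then automatically $K_{2,t}$-minor-free, and the quasi-isometry is immediate from the bounded diameter of the bags. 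You may want to study how the paper substitutes this single implication (``$2$-connected minors of $H$ lift to fat minors of $G$'') for the entire tree-decomposition machinery you were trying to import.
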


We remark that this problem bears some similarity to the coarse Menger conjecture \cite{AHJKW,GeoPapMin}, which has been disproven even in a much weaker form \cite{NgScSeAsyIV}.

\medskip
Our proof is constructive, and we obtain the bound $(9t^{12}K+204t^9K, 1)$ on $f(K)$. In other words, the additive distortion we obtain is $1$, and the multiplicative distortion $O(K)$. From this it is easy to obtain a map of additive distortion~$0$ (and still with multiplicative distortion $O(K)$ \cite[Observation 2.2]{GeoPapMin}\footnote{The additive error can always be hidden inside the multiplicative factor, unless more than one vertex of $G$ is mapped to the same vertex of $H$. In this case, attach a star of size $|V(G)|$ to each vertex $h$ of $H$ (which does not create any $K_{2,t}$ minors), and for each vertex $v$ of $G$ previously mapped to $h$, map $v$ to a distinct leaf of the star attached to $h$.}).

Given a finite graph \G, let \defi{$\alpha_t(G)$}  denote the minimal multiplicative distortion of any embedding of \g into a $K_{2,t}$-minor-free graph. Chepoi et al.\ \cite{CDNRV} asked whether there is an efficient algorithm that approximates $\alpha_t(G)$ to a constant factor. Using the above remarks we answer this question in the affirmative: 

\begin{corollary} \label{cor:algo}
    For every $t\in \N$, there is a polynomial-time algorithm that given a finite graph \G, approximates $\alpha_t(G)$ up to a universal multiplicative constant. 
\end{corollary}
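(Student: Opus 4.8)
The plan is to pin $\alpha_t(G)$, up to a constant factor depending only on $t$, to the coarsest scale at which $G$ still has a fat $K_{2,t}$ minor, and then to compute that scale in polynomial time. Write $\kappa(G)$ for the largest $K\in\N$ such that $G$ has a $K$-fat $K_{2,t}$ minor, with $\kappa(G):=0$ if $G$ has no $K_{2,t}$ minor at all; since the branch sets and connecting paths of a $K$-fat minor lie pairwise at distance $\ge K$, we have $\kappa(G)\le\diam(G)<|V(G)|$ (we may assume $G$ is connected, else work componentwise). The first step is to prove
\[
\tfrac1{c(t)}\,(\kappa(G)+1)\ \le\ \alpha_t(G)\ \le\ c(t)\,(\kappa(G)+1)
\]
for some constant $c(t)$.

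The upper bound follows from \Tr{thm:FatK2t}: since $G$ has no $(\kappa(G)+1)$-fat $K_{2,t}$ minor, the theorem gives a $K_{2,t}$-minor-free graph $H$ with an $f(\kappa(G)+1)$-quasi-isometry $G\to H$, and the remarks after the theorem (the star-augmentation trick that removes the additive error without creating a $K_{2,t}$ minor) turn this into an embedding of $G$ into a $K_{2,t}$-minor-free graph of multiplicative distortion $O_t(\kappa(G)+1)$. The lower bound is the quantitative quasi-isometry invariance of fat minors, cf.\ \cite[Observation~2.4]{GeoPapMin}: if $\psi$ embeds $G$ into a $K_{2,t}$-minor-free graph $H$ with multiplicative distortion $\delta$, then pushing the branch sets and paths of a $K$-fat $K_{2,t}$ minor of $G$ forward along $\psi$ and re-linking the images by geodesics of $H$ (each edge of $G$ being stretched by at most $\delta$) produces a $K_{2,t}$ minor of $H$ whose parts are pairwise at distance at least $K-O_t(\delta)$; as a $K_{2,t}$-minor-free graph contains no $1$-fat $K_{2,t}$ minor, this forces $K\le O_t(\delta)$, so $\kappa(G)\le O_t(\alpha_t(G))$, which together with $\alpha_t(G)\ge1$ gives the displayed estimate.

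It then remains to approximate $\kappa(G)$ within a constant factor in polynomial time and report the corresponding value from the displayed estimate. Since $\kappa(G)<|V(G)|$, it is enough to decide, for $O(\log|V(G)|)$ geometrically spaced values of $K$, whether $G$ has a $K$-fat $K_{2,t}$ minor; for fixed $t$ this takes time $|V(G)|^{O_t(1)}$. Indeed, such a fat minor can be taken minimal, so each of its $t+2$ branch sets is a (possibly subdivided) tree with only $O(t)$ vertices of degree $\neq 2$; enumerating these skeleton vertices together with the path endpoints over the $|V(G)|^{O(t)}$ possibilities reduces the task to routing $O(t)$ internally disjoint paths between prescribed vertices with prescribed pairs of them at distance $\ge K$, and, up to halving $K$, the distance constraints are equivalent to disjointness of the closed $\lfloor K/2\rfloor$-neighbourhoods of the paths, so the problem becomes an instance of the disjoint connected subgraphs problem with a bounded number of parts, which is polynomial. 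Alternatively, and avoiding a dedicated fat-minor detector, one can exploit that the proof of \Tr{thm:FatK2t} is constructive: run its algorithm at scales $1,2,4,\dots$; at the largest scale $K^\ast$ for which it still returns a quasi-isometry to a $K_{2,t}$-minor-free graph one has $\alpha_t(G)\le O_t(K^\ast)$, whereas its failure at scale $2K^\ast$ exhibits a $2K^\ast$-fat $K_{2,t}$ minor and hence $\alpha_t(G)\ge\Omega_t(K^\ast)$, so $K^\ast$ is again a constant-factor proxy for $\alpha_t(G)$.

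The step I expect to be the main obstacle is making the fat-minor test provably polynomial for fixed $t$: one must pin down the reduction of $K$-fat $K_{2,t}$ minor containment to a bounded disjoint-paths problem while controlling the interaction of the distance-$\ge K$ constraints with the a priori unbounded branch sets, in particular justifying that minimal branch sets have bounded-size skeletons and that passing to neighbourhoods costs only a constant factor in $K$. The self-bootstrapping alternative sidesteps this, but then one must extract an explicit polynomial-time algorithm from the constructive (if intricate) proof of \Tr{thm:FatK2t} and check that a failure of that algorithm genuinely certifies a fat $K_{2,t}$ minor of comparable scale.
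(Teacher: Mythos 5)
Your overall framing --- tying $\alpha_t(G)$ to the critical fatness scale, getting the upper bound from \Tr{thm:FatK2t} plus the star trick and the lower bound from quasi-isometry invariance of fat minors (the paper cites \cite[Proposition~3]{CDNRV} for the latter) --- matches the paper. The gap is in your primary route for the algorithmic step, the ``dedicated fat-minor detector''. After guessing the $|V(G)|^{O(t)}$ skeletons, what remains is to route boundedly many internally disjoint paths between prescribed terminals so that prescribed pairs of them are at distance at least $K$, equivalently so that their $\lfloor K/2\rfloor$-neighbourhoods are disjoint. This is \emph{not} the disjoint connected subgraphs problem: the objects that must be disjoint are the $\lfloor K/2\rfloor$-balls around the paths, not the paths themselves, and no polynomial-time algorithm is known for this ``far-apart disjoint paths'' problem. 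Indeed, computing the largest $K$ for which $n$ pairwise $K$-far $S$--$T$ paths exist is NP-hard \cite{BalMcMmet}, and whether it can even be \emph{approximated} to a constant factor is exactly the open problem the paper poses in \Sr{sec:algo}; your reduction, if it worked, would resolve that problem, so it cannot be dismissed as ``a bounded disjoint connected subgraphs instance, which is polynomial''. Losing a factor $2$ in $K$ does not remove this obstacle.

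Your fallback route is the paper's actual argument, but you leave precisely the substantive part unverified. The paper runs the construction of \Lr{lem:ObtainingaGPwithNiceProperties} at each scale $K=1,\dots,n$ and observes that the no-fat-minor hypothesis is used only inside \Lr{lem:NearComps:NumberAndRadius} and \Lr{lem:FatK2t:Components}, each of which is stated together with an algorithmic version that either confirms the needed property or outputs a $K$-fat $\Theta_t$ minor (ultimately via \Lr{lem:FindingKFatK2tMinors}); moreover, if the construction succeeds but the resulting $H$ happens to contain a $K_{2,t}$ minor (testable in polynomial time by Robertson--Seymour \cite{GMXIII}), then \Cr{cor:FirstLemma:PolyTimeAlgo} converts that minor into a $K$-fat model in $G$. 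These certification steps are exactly what make ``failure at scale $K$ exhibits a fat $K_{2,t}$ minor of comparable scale'' true, and they are the content your proposal defers. As it stands, route (a) rests on an unjustified (and essentially open, if not false) algorithmic claim, and route (b) is correct in outline but missing its key verification.
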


We prove this in \Sr{sec:algo}, where we offer some related open problems. 

\subsection{Other problems}
As mentioned above, \Tr{thm:FatK2t} becomes false if we replace 
$K_{2,t}$ by $K_{4,t}, t\geq 4$ (even in a weak form as in Question 1.2 below), but we do not know if it is true for $K_{3,t}, t\geq 3$. The case $J=K_{3,3}$ is particularly important, as it is closely related to the `coarse Kuratowski conjecture' of \cite{GeoPapMin}:

\begin{question}
    Are there functions $f: \N \rightarrow \N^2$ and $s: \N \rightarrow \N$ such that every graph with no $K$-fat $K_{3,t}$ minor is $f(K)$-quasi-isometric to a graph with no $K_{3,s(t)}$ minor? Can we choose $s(t) = t$?
\end{question}

Another question of \cite{GeoPapMin} is for which $J$ we can achieve $M=1$ in \Cnr{conj fat min}, and variants of this question are discussed by Nguyen, Scott and Seymour \cite{NgScSeAsyII,NgScSeCoa}. Settling this for $J=K_{2,t}$ would be interesting, but our proof does not provide evidence.

\subsection{Proof approach}
Like many results in the area, our proof of \Tr{thm:FatK2t} is achieved by decomposing the vertex set of the underlying graph \g into `bags', of bounded diameter, so that after collapsing each bag into a vertex, the resulting graph $H$ is quasi-isometric to \G. The standard technique is to achieve such a decomposition by first decomposing \g into its distance layers from a fixed `root' vertex, and place nearby vertices of a fixed layer, or a fixed number of consecutive layers, into a bag, see e.g.\ \cite[Theorem~3.1]{GeoPapMin}. Our decomposition is based on a rather intricate refinement of this technique, whereby the number of consecutive layers from which a bag is formed is not fixed but depends on the local structure. Once $H$ is constructed, one then needs a way to turn any $K_{2,t}$ minor of $H$ into a $K$-fat minor of \G; this is not straightforward, one of the difficulties being that bags are not necessarily connected. Thus our proof requires new ideas involving a new way of forming branch sets in \g out of bags in $H$ by using vertices from bags of lower layers. To ensure that distinct branch sets are $K$-far apart, we use a new `buffer zone' technique within each bag, i.e.\ a sequence of layers that can only be used to accommodate branch paths. A more detailed overview of our proof is given in \Sr{sec:overview}. 

\section{Preliminaries}

Graphs in this paper are allowed to be infinite, unless stated otherwise. 
We follow the basic graph-theoretic terminology of~\cite{Bibel}; 
in particular, $\N$ includes $0$, and we denote by \defi{$||G||$} the number of edges of a graph $G$. Note that if $P$ is a path, then $||P||$ is its length.
Moreover, a set $U$ of vertices in a graph $G$ is \defi{connected}, if the subgraph $G[U]$ it induces is connected. 

Given a graph $G$, we write \defi{$\cc(G)$} for the set of components of $G$.
Given a subgraph $Y$ of~$G$, the \defi{boundary} \defi{$\partial_G Y$} of $Y$ is the set of all vertices of $Y$ that send an edge to $G-Y$. The \defi{neighbourhood $N_G(Y)$} of $Y$ is the set of vertices of  $G-Y$ sending an edge to $Y$ (and therefore to $\partial_G Y$).

\subsection{Distances}

Let $G$ be a graph.
We write~\defi{$d_G(v, u)$} for the distance between two vertices~$v$ and~$u$ in~$G$. 
For two sets~$U$ and~$U'$ of vertices of~$G$, we write~\defi{$d_G(U, U')$} for the minimum distance of two elements of~$U$ and~$U'$, respectively.
If one of~$U$ or~$U'$ is just a singleton, then we omit the braces, writing $d_G(v, U') := d_G(\{v\}, U')$ for $v \in V(G)$.

Given a set~$U$ of vertices of~$G$, the \defi{ball (in~$G$) around~$U$ of radius $r \in \N$}, denoted by~\defi{$B_G(U, r)$}, is the set of all vertices in~$G$ of distance at most~$r$ from~$U$ in~$G$.
If~$U = \{v\}$ for some~$v \in V(G)$, then we again omit the braces, writing~$B_G(v, r)$ instead of $B_G(\{v\}, r)$. 

The \defi{diameter}~\defi{$\diam(G)$} of~$G$ is the smallest number~$k \in \N \cup \{\infty\}$ such that $d_G(u,v) \leq k$ for every two~$u,v \in V(G)$. If $G$ is empty, then we define its diameter to be~$0$.
We remark that if $G$ is disconnected but not the empty graph, then its diameter is $\infty$.
The \defi{diameter of a set $U\subseteq V(G)$ in $G$}, denoted by \defi{$\diam_G(U)$}, is the smallest number $k \in \N$ such that $d_G(u,v) \leq k$ for all $u,v \in U$ or $\infty$ if such a $k \in \N$ does not exist. 

If $Y$ is a subgraph of $G$, then we abbreviate $d_G(U,V(Y))$, $\diam_G(V(Y))$ and $B_G(V(Y),r)$ as $d_G(U,Y)$, $\diam_G(Y)$ and $B_G(Y,r)$, respectively.
\medskip

Let $G$ be a graph. We say that $U \subseteq V(G)$ is \defi{$K$-near-connected} for $K\in \N$, if for every $x,y\in U$, there is a sequence $x=x_0,x_1, \ldots, x_k=y$ of vertices in $U$ such that $d(x_i,x_{i+1})\leq K$ \fe\ $i<k$.
Such a sequence $P=x_0, \dots, x_k$ will be called an $K$-\defi{near path} from $x$ to $y$. 
A \defi{$K$-\ncm} of $U$ is a maximal subset of $U$ that is $K$-near-connected.

\subsection{Fat minors} \label{sec FM}

Let $J, G$ be (multi-)graphs.
A \defi{model} $(\cu,\ce)$ of $J$ in $G$ is a collection $\cu$ of disjoint, connected sets $U_x \subseteq V(G), x\in V(J)$, and a collection $\ce$ of internally disjoint $U_{x}$--$U_{y}$ paths $E_{e}$, one for each edge $e=xy$ of $J$, \st\ $E_{e}$ is disjoint from every $U_z$ with $z \neq x, y$.
The $U_x$ are the \defi{branch sets} and the $E_e$ are the \defi{branch paths} of the model.
A model $(\cu, \ce)$ of $J$ in $G$ is \defi{$K$-fat} for $K \in \N$ if $\dist_G(Y,Z) \geq K$ for every two distinct $Y,Z \in \cu \cup \ce$ unless $Y = E_e$ and $Z = U_x$ for some vertex $x \in V(J)$ incident to $e \in E(J)$, or vice versa.
We say that $J$ is a \defi{($K$-fat) minor} of $G$, if $G$ contains a ($K$-fat) model of $X$.
We remark that the $0$-fat minors of $G$ are precisely its minors. 

\begin{lemma} \label{lem:SubdivingEdgesOfFatMinor}
    Let $J, G$ be (multi-)graphs, and let $\dot{J}$ be the graph obtained from $J$ by subdividing each of its edges precisely once. If $J$ is a $3K$-fat minor of~$G$ for some $K \in \N$, then $\dot{J}$ is a $K$-fat minor of $G$.
\end{lemma}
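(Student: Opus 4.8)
The plan is to start from a $3K$-fat model $(\cu,\ce)$ of $J$ in $G$ and surgically split each branch path $E_e$, $e\in E(J)$, into three parts realising, around the subdivision vertex $w_e$ of $\dot J$, the branch-path--branch-set--branch-path pattern. We may assume $K\ge 1$. For $x\in V(J)$ keep the branch set $U_x$. Fix an edge $e=xy\in E(J)$ and write $E_e=v_0v_1\cdots v_\ell$ with $v_0\in U_x$, $v_\ell\in U_y$ (so $v_1,\dots,v_{\ell-1}\notin\bigcup_{z}U_z$); since $\dist_G(U_x,U_y)\ge 3K$ we get $\ell=||E_e||\ge\dist_G(v_0,v_\ell)\ge 3K$. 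We look for $P_e^x,W_e,P_e^y$ with: \textup{(a)} $P_e^x,W_e,P_e^y\subseteq V(E_e)$ and $W_e$ connected; \textup{(b)} $P_e^x$ is a path from $v_0$ to some $a_e\in W_e$ with $P_e^x\subseteq B_G(U_x,K)$; \textup{(c)} $P_e^y$ is a path from $v_\ell$ to some $b_e\in W_e$ with $P_e^y\subseteq B_G(U_y,K)$; \textup{(d)} $\dist_G(v,U_x)\ge K$ and $\dist_G(v,U_y)\ge K$ for every $v\in W_e$; \textup{(e)} $P_e^x$ and $P_e^y$ are disjoint, $P_e^x\cap W_e=\{a_e\}$, $P_e^y\cap W_e=\{b_e\}$. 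The family $\big(\{U_x\}_{x\in V(J)}\cup\{W_e\}_{e\in E(J)},\ \{P_e^x\}_{e}\cup\{P_e^y\}_{e}\big)$, with $P_e^x$ realising $xw_e$ and $P_e^y$ realising $w_ey$, is then the desired $K$-fat model of $\dot J$.

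Granting such splittings, $K$-fatness is a routine triangle-inequality computation exploiting the factor $3$. Each new branch set or branch path lies in $V(E_e)$ for a single $e$, or equals some $U_x$. Two pieces lying on different $E_e,E_{e'}$, or one on $E_e$ ($e=xy$) and the other a $U_z$ with $z\notin\{x,y\}$, or two distinct $U_z$'s, have $G$-distance $\ge 3K\ge K$ by the $3K$-fatness of $(\cu,\ce)$ (here one uses that $v_1,\dots,v_{\ell-1}$ avoid every $U_z$, so any piece inside $V(E_e)$ is at distance $0$ from $E_e$). For pieces on a common $E_e$ ($e=xy$) together with $U_x$ or $U_y$: by \textup{(b)}, $\dist_G(P_e^x,U_y)\ge\dist_G(U_x,U_y)-K\ge 2K$, and likewise $\dist_G(P_e^y,U_x)\ge 2K$; by \textup{(b)},\textup{(c)}, $\dist_G(P_e^x,P_e^y)\ge\dist_G(U_x,U_y)-2K\ge K$; by \textup{(d)}, $\dist_G(W_e,U_x)\ge K$ and $\dist_G(W_e,U_y)\ge K$; while $\{P_e^x,U_x\}$, $\{P_e^x,W_e\}$, $\{P_e^y,W_e\}$, $\{P_e^y,U_y\}$ are exactly the incident branch-path/branch-set pairs of $\dot J$, for which no bound is required. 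Finally the combinatorial axioms of a model (disjointness and connectedness of branch sets; internal disjointness of branch paths, each meeting a branch set only in the relevant endpoint) follow from \textup{(a)}, \textup{(e)} and the fact that all distances just estimated are $\ge K\ge 1$, so the relevant pieces are genuinely vertex-disjoint.

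It remains to perform the split, and this is where I expect the real difficulty to lie. Put $r:=\min\{j:\dist_G(v_j,U_x)\ge K\}$ and $s:=\max\{j:\dist_G(v_j,U_y)\ge K\}$; both exist since the distances reach $3K$ at the far ends, and one checks $\dist_G(v_r,U_x)=K=\dist_G(v_s,U_y)$ and $1\le r<s\le\ell-1$ (the strict inequality because $r\ge s$ would give $\dist_G(U_x,U_y)<2K$). Then $P_e^x:=v_0\cdots v_r\subseteq B_G(U_x,K)$ and $P_e^y:=v_s\cdots v_\ell\subseteq B_G(U_y,K)$ are disjoint, so \textup{(b)},\textup{(c)},\textup{(e)} will hold once $v_r,v_s\in W_e$, and $v_r,v_s$ already satisfy \textup{(d)}. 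Let $F_e:=\{v_j:\dist_G(v_j,U_x)\ge K\ \text{and}\ \dist_G(v_j,U_y)\ge K\}$, so $v_r,v_s\in F_e$. If $v_r$ and $v_s$ lie in one component of $G[F_e]$, take $W_e$ to be that component with $a_e:=v_r$, $b_e:=v_s$, and we are done. The obstruction is that $E_e$ need not be geodesic, so $F_e$ can be disconnected with $v_r,v_s$ in different components; one must then choose a component $W_e$ of $G[F_e]$ admitting a path to $v_0$ inside $B_G(U_x,K)\cap V(E_e)$ and a path to $v_\ell$ inside $B_G(U_y,K)\cap V(E_e)$ (these become $P_e^x,P_e^y$, with end-vertices $a_e,b_e$, automatically distinct since otherwise $\dist_G(v_0,v_\ell)\le 2K$). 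The idea for existence: traversing $E_e$ and labelling each $v_j$ by whether it is within distance $K-1$ of $U_x$, within distance $K-1$ of $U_y$, or in $F_e$, one never passes directly from the first label to the second (that forces $\dist_G(U_x,U_y)<3K$), so the $F_e$-stretches separate the ``near $U_x$'' stretches from the ``near $U_y$'' stretches along $E_e$; a suitably chosen $F_e$-stretch flanked by near-$U_x$ on one side and near-$U_y$ on the other gives the required component, the detours being routed inside $V(E_e)$ through those flanking stretches, possibly using chords of $E_e$ rather than $E_e$ itself. Making this choice precise — checking that a single $F_e$-component admits detours of the right kind to *both* $v_0$ and $v_\ell$ within $V(E_e)$ — is the technical heart of the argument, and the step I expect to be most delicate.
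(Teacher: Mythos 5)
Your overall strategy coincides with the paper's: keep the $U_x$, carve a connected ``middle piece'' out of each $E_e$ to serve as the branch set of the subdivision vertex $w_e$, and join it to $U_x$ and $U_y$ by short paths; your distance computations in the second paragraph are correct and essentially those of the paper. But the step you yourself flag as the technical heart is a genuine gap, and as set up it does not close. With $r=\min\{j:d_G(v_j,U_x)\ge K\}$ and $s=\max\{j:d_G(v_j,U_y)\ge K\}$, the path $E_e$ may re-enter $B_G(U_x,K-1)$ after $v_r$ (it need not be geodesic), so $v_r$ and $v_s$ can indeed lie in different components of $G[F_e]$. Your proposed repair then needs a single component of $G[F_e]$ reachable from $v_0$ by a path inside $B_G(U_x,K)\cap V(E_e)$ and from $v_\ell$ by one inside $B_G(U_y,K)\cap V(E_e)$; but $B_G(U_x,K)\cap V(E_e)$ can itself induce a disconnected graph (take $E_e$ induced and oscillating in and out of $B_G(U_x,K)$), and the labelling argument you sketch does not produce the required detours. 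The sketch stops exactly where the work is.

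The fix is a different choice of endpoints for the middle piece, which is what the paper does. Let $u_e$ be the \emph{last} vertex of $E_e$ (traversed from $U_x$ to $U_y$) with $d_G(U_x,u_e)\le K$, and let $v_e$ be the first vertex after $u_e$ with $d_G(U_y,v_e)\le K$. Then every vertex strictly after $u_e$ has $d_G(U_x,\cdot)>K$ and every vertex strictly between $u_e$ and $v_e$ has $d_G(U_y,\cdot)>K$, while comparing each of $u_e,v_e$ with its neighbour on $E_e$ gives $d_G(U_x,u_e)=d_G(U_y,v_e)=K$, and $3K$-fatness gives $d_G(U_y,u_e)\ge 2K$. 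Hence the subpath of $E_e$ from $u_e$ to $v_e$ is connected by fiat, lies in your $F_e$, and is at distance exactly $K$ from each of $U_x$ and $U_y$: your conditions \textup{(a)} and \textup{(d)} hold with no component-chasing. For the connecting paths, drop the requirement that they lie in $V(E_e)$: since $d_G(U_x,U'_{w_e})=K$, any geodesic of length $K$ serves as the branch path for $xw_e$, and it lies in $B_G(U_x,K)\cap B_G(E_e,K)$, which is all your estimates need (for pieces on different original edges, use $d_G(E_e,E_f)\ge 3K$ and subtract $2K$ for the two $K$-neighbourhoods). With these two changes your argument becomes the paper's proof.
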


This lemma is a variant of \cite[Lemma~5.3]{GeoPapMin}; we include a proof for convenience.

\begin{proof}
    Let $(\cu, \ce)$ be a $3K$-fat model of $J$ in $G$. We construct a $K$-fat model $(\cu', \ce')$ of $\dot{J}$ in $G$ as follows. For every $x \in V(J)$, we keep $U'_x := U_x$ as a branch set. For every edge $e = xy \in E(J)$, we let $u_e$ be the last vertex on $E_e$, as we move from $U_x$ to $U_y$ along $E_e$, such that $d_G(U_x, u_e) \leq K$, and we let $v_e$ be the first vertex after $u_e$ along $E_e$ such that $d_G(U_y, v_e) \leq K$. We let the branch set~$U'_{w_e}$ for the subdivision vertex of $\dot{J}$ on $e$ be the subpath of $E_e$ between $u_e$ and~$v_e$. For $z \in \{x,y\}$, we let $E'_{zw_e}$ be an $U'_z$--$U'_{w_e}$ path of length $K$. This completes the definition of $(\cu', \ce')$.

    As $(\cu,\ce)$ is $3K$-fat and $E'_{xw_e} \subseteq B_G(E_e,K)$ for all edges of $\dot{J}$, we have $d_G(E'_{xw_e}, E'_{yw_f})$ $\geq 3K-2K=K$ for all edges $xw_e \neq yw_f$ of $\dot{J}$, unless $e = f$, in which case we have $d_G(E'_{xw_e}, E'_{yw_e}) \geq d_G(U_x, U_y) - ||E'_{xw_e}|| - ||E'_{yw_e}|| = 3K-K-K = K$ by the choice of the branch paths of $\dot{J}$.
    Similarly and because $U'_{w_e} \subseteq E_e$ for all subdivision vertices of $\dot{J}$, we have $d_G(U'_x, U'_y) \geq 3K$ for all $x \neq y \in V(\dot{J})$, unless one of $x,y$ is a subdivision vertex $w_e$ on an edge $e$ of $J$ incident with the other, in which case we have $d_G(U'_x, U'_y) \geq K$ by the choice of the $U'_{w_e}$. 
    Hence, it remains to consider $x \in V(\dot{J})$ and $yw_e \in E(\dot{J})$. If $x$ is a subdivision vertex on an edge $f$ of $J$, then $d_G(U'_x, E'_{yw_e}) \geq d_G(E_f, E_e) \geq 3K-K=2K$. 
    Otherwise, $d_G(U'_x, E'_{yw_e}) \geq d_G(U_x, U_y)- ||E'_{yw_e}|| = 3K - K = 2K$, as desired.
\end{proof}

\subsection{Quasi-isometries and graph-partitions} \label{sec QI}

Let $G,H$ be graphs.
For $M \in \R_{\geq 1}$ and $A \in \R_{\geq 0}$, an \defi{$(M, A)$-quasi-isometry} from~$G$ to~$H$ is a map~$\varphi : V(G) \rightarrow V(H)$ such that
\begin{enumerate}[label=\rm{(Q\arabic*)}]
    \item \label{quasiisom:1} $M^{-1} \cdot d_G(u,v) - A \leq d_H(\varphi(u),\varphi(v)) \leq M\cdot d_G(u,v)+A$ for every $u,v \in V(G)$, and
    \item \label{quasiisom:2} for every $h \in V(H)$ there is $v \in V(G)$ such that $d_H(h,\varphi(v)) \leq A$.
\end{enumerate}

We say that a map ~$\varphi : V(G) \rightarrow V(H)$ has \defi{multiplicative distortion} $M$ (respectively, \defi{additive distortion} $A$) if it satisfies \ref{quasiisom:1} with $A=0$ (resp.\ $M=1$). 
\smallskip

A \defi{graph-partition of}~$G$ \defi{over}~$H$, or $H$-\defi{partition} for short, is a partition $\ch := (V_h : h \in V(H))$ of~$V(G)$ indexed by the nodes of~$H$ such that for every edge $uv \in E(G)$, if $u \in V_{g}$ and $v \in V_h$, then $g = h$ or $gh \in E(H)$.
(This notion generalizes tree-partitions.) 

We say that $\ch$ is \defi{honest}, if $V_h$ is non-empty for all $h \in V(H)$ and if for every edge $gh \in E(H)$ there exists an edge $uv \in V(G)$ such that $u \in V_g$ and $v \in V_h$. 
We say that $\ch$ is \defi{$R$-bounded}, if each $V_h$ has diameter at most~$R(K)$.

\begin{lemma} \label{lem part QI}
    Let $H, G$ be graphs, and let $\ch$ be an honest, $R$-bounded $H$-partition  of $G$ for some $R\in \R$. Then $G$ is $(R + 1, R/(R+1))$-quasi-isometric to~$H$. 
\end{lemma}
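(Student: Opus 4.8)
The plan is to define a candidate quasi-isometry $\varphi\colon V(G)\to V(H)$ by sending each vertex $v\in V_h$ to the node $h$, and then verify \ref{quasiisom:1} and \ref{quasiisom:2} directly from the two hypotheses on $\ch$. Property \ref{quasiisom:2} is essentially immediate: since $\ch$ is honest, every $V_h$ is non-empty, so for each $h\in V(H)$ we may pick any $v\in V_h$ and get $d_H(h,\varphi(v))=0\le R/(R+1)$, which certifies surjectivity-up-to-$A$ with $A=R/(R+1)$.

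For \ref{quasiisom:1} the two inequalities are proved separately. For the upper bound, I would take a shortest $u$--$v$ path $P$ in $G$ and walk along it: consecutive vertices of $P$ lie in bags indexed by equal or adjacent nodes of $H$ (by the definition of an $H$-partition), so the sequence of distinct bag-indices traversed forms a walk in $H$ from $\varphi(u)$ to $\varphi(v)$ of length at most $||P|| = d_G(u,v)$. Hence $d_H(\varphi(u),\varphi(v)) \le d_G(u,v) \le M\cdot d_G(u,v)$ with $M=R+1$, and the additive slack $A$ is not even needed here. For the lower bound, I would take a shortest $\varphi(u)$--$\varphi(v)$ path $Q = h_0 h_1 \cdots h_k$ in $H$ with $h_0=\varphi(u)$, $h_k=\varphi(v)$, and use honesty to pick, for each edge $h_ih_{i+1}$ of $Q$, an actual edge $a_ib_i\in E(G)$ with $a_i\in V_{h_i}$ and $b_i\in V_{h_{i+1}}$. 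Concatenating a path inside $V_{h_0}$ from $u$ to $a_0$, then the edge $a_0b_0$, then a path inside $V_{h_1}$ from $b_0$ to $a_1$, and so on, finally a path inside $V_{h_k}$ from $b_{k-1}$ to $v$, gives a $u$--$v$ walk in $G$. Using that each $V_{h_i}$ has diameter at most $R$ (so each in-bag segment has length at most $R$) and that there are $k+1$ bags and $k$ bridging edges, its length is at most $(k+1)R + k = k(R+1) + R$. Therefore $d_G(u,v) \le (R+1)\,d_H(\varphi(u),\varphi(v)) + R$, which rearranges to $M^{-1}d_G(u,v) - A \le d_H(\varphi(u),\varphi(v))$ with $M = R+1$ and $A = R/(R+1)$, since $\frac{1}{R+1}\big(d_G(u,v)-R\big) = \frac{1}{R+1}d_G(u,v) - \frac{R}{R+1}$.

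The only genuine subtlety — and the single point I would be careful about — is the case analysis when $u$ and $v$ lie in the same bag or in adjacent bags, i.e.\ when $d_H(\varphi(u),\varphi(v))$ is $0$ or $1$, so that the lower-bound inequality must still hold with the claimed constants. If $\varphi(u)=\varphi(v)$ then $d_G(u,v)\le R$ and we need $R \le (R+1)\cdot 0 + R$, which holds with equality; if $d_H(\varphi(u),\varphi(v))=1$ the bound above with $k=1$ gives $d_G(u,v)\le 2R+1 = (R+1)\cdot 1 + R$, again fine. So no separate treatment is really needed — the general estimate already covers the small cases — but it is worth remarking that this is exactly why the additive constant is forced to be $R/(R+1)$ rather than something smaller. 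I do not expect any real obstacle here; the whole argument is a routine concatenation-of-paths estimate, and the main task is just bookkeeping of the $R$'s and the off-by-one in the number of bags versus bridging edges. One should also note $\varphi$ is well-defined precisely because $\ch$ is a partition (every vertex lies in exactly one $V_h$), and that $R+1\ge 1$ and $R/(R+1)\ge 0$ so the constants are admissible for an $(M,A)$-quasi-isometry.
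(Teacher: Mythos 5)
Your proof is correct and follows essentially the same route as the paper: the same map $\varphi(v)=h_v$, the same walk-projection argument for the upper bound, and the same honesty-plus-diameter concatenation giving $d_G(u,v)\le (R+1)\,d_H(\varphi(u),\varphi(v))+R$. One tiny imprecision: since $\diam_G(V_h)\le R$ is measured in $G$ and bags need not be connected, the connecting segments need not lie \emph{inside} the bags, but as you only need a $u$--$v$ walk in $G$ this does not affect the estimate.
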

This is a special case of \cite[Lemma~3.9]{radialpathwidth}; we include a proof for convenience:
\begin{proof} 
    As the $V_h$ are pairwise disjoint and cover $V(G)$, there is for every $v \in V(G)$ a unique $h_v \in V(H)$ such that $v \in V_{h_v}$. We claim that $\varphi: V(G) \rightarrow V(H)$ with $\varphi(v) := h_v$ is the desired quasi-isometry from~$G$ to~$H$. Let us check that $\varphi$ satisfies both properties of the definition of quasi-isometry:

    \ref{quasiisom:2}: As the $V_h$ are non-empty, there is for every $h \in V(H)$ some $v \in V(G)$ such that $h = \varphi(v)$, and hence $h$ has distance $0$ from $\varphi(v)$.

    \ref{quasiisom:1}: Fix $u, v \in V(G)$. Since $w \in V_{\varphi(w)}$ for all $w \in V(H)$, every $u$--$w$ path~$P$ in~$G$ of length $\ell \in \N$ induces a $\varphi(u)$--$\varphi(w)$ walk in~$H$ of length at most~$\ell$ with vertex set $\{h \in V(H) \mid \exists p \in V(P) : p \in V_{h}\}$. 
    Hence, $d_H(\varphi(u), \varphi(v)) \leq d_G(u,v)$. 
    
    Conversely, every $\varphi(u)$--$\varphi(v)$ path in~$H$ of length~$\ell$ can be turned into a $u$--$v$ walk in~$G$ of length at most $\ell \cdot (R+1) + R$ as the~$V_h$ have diameter at most~$R$ and~$\ch$ is honest. Hence, $d_G(u,v) \leq (R+1)\cdot d_H(\varphi(u),\varphi(v)) + R$.
\end{proof}

\section{Structure of the proof of Theorem~\ref{thm:FatK2t}} \label{sec:overview}

For the proof of Theorem~\ref{thm:FatK2t} we construct a graph-partition of a graph~$G$ with no $K$-fat $K_{2,t}$ minor, and then employ Lemma~\ref{lem part QI} to obtain the desired quasi-isometry. More precisely, we will prove the following stronger version of Theorem~\ref{thm:FatK2t}: 

\begin{theorem} \label{thm:FatK2t:GraphPartition}
    For every $t \in \N$ there exists a function $R: \N \rightarrow \N$ such that every graph $G$ with no $K$-fat $K_{2,t}$ minor has an honest, $R(K)$-bounded graph-partition over a graph~$H$ \st\ every $2$-connected multi-graph which is a minor of~$H$ is a $K$-fat minor of~$G$.
\end{theorem}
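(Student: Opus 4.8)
The plan is to build the partition by a breadth-first layering of $G$, then verify a ``lifting'' property promoting minors of the quotient $H$ to fat minors of $G$. We may assume $G$ is connected, since the partition can be constructed componentwise and the resulting graphs $H$ combined by disjoint union, and every $2$-connected multigraph minor of a disjoint union lies in a single component. Fix a root $r\in V(G)$, let $L_i:=\{v:d_G(r,v)=i\}$ be the distance layers, and set $c:=c_t\cdot K$ for a large enough constant $c_t$ depending only on $t$ (the claimed bound needs $c_t=O(t^{12})$). I would partition $\N$ into consecutive finite intervals $I_0,I_1,\dotsc$, the \emph{super-layers}, put $S_j:=\bigcup_{i\in I_j}L_i$, and obtain the bags of $\ch$ by cutting each $S_j$ into its $c$-near-components, refining these further, if needed, into pieces of bounded $G$-diameter. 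The vertices of $H$ are the bags, with an edge $gh$ (possibly several in parallel) for each $G$-edge joining the bags $g$ and $h$; then $\ch=(V_h:h\in V(H))$ is an $H$-partition of $G$, honest by construction. Two features are non-standard. First, $|I_j|$ is not fixed: one grows the current super-layer one layer at a time and closes it off as soon as the exposed piece of $G$ displays a prescribed amount of ``branching'', a rule calibrated so that the diameter bound below holds; thus the width of $S_j$ adapts to the local structure. Second, inside each super-layer one reserves its first $K$ and last $K$ layers as a \emph{buffer zone}, a region the lifting step may use to route branch paths but never to host a branch set.

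The hypothesis that $G$ has no $K$-fat $K_{2,t}$ minor enters, via the adaptive rule, to force that every bag has $G$-diameter at most some $R(K)=O_t(K)$; this is what makes $\ch$ an honest $R$-bounded partition. Indeed, were a bag $B\subseteq S_j$ to contain vertices $x,y$ with $d_G(x,y)$ large, then a $c$-near-path from $x$ to $y$ in $B$, fleshed out with short $G$-paths, would be a long path confined to the band $B_G(S_j,c)$ of layers; combining it with BFS-geodesics descending from $x$ and from $y$ towards $r$, and using that the stopping rule bounds the width of $S_j$ while still forcing the path to be long, one would extract $t$ internally disjoint, pairwise $K$-far detours between two $K$-far branch sets, i.e.\ a $K$-fat $K_{2,t}$ minor, a contradiction. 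Making this extraction rigorous, and tuning the adaptive rule so that it simultaneously keeps the band width bounded and leaves bags serviceable for the lifting, is one technical core.

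The other, and I expect the harder, core is the \emph{lifting lemma}: every $2$-connected multigraph $J$ that is a minor of $H$ must be realised as a $K$-fat minor of $G$. Given a model $(\widehat{\cu},\widehat{\ce})$ of $J$ in $H$, the root gives each bag a level and each branch set $\widehat U_x$ an apex bag of least level. I would assemble a $K$-fat model $(\cu,\ce)$ in $G$ whose branch sets come from the bags of the $\widehat U_x$ but are \emph{augmented downwards}: in each bag of $\widehat U_x$ take a connected subgraph lying below the buffer zones, then knit these pieces together — repairing, along the way, the possible disconnectedness of an individual bag in $G$ — by following BFS-geodesics down into bags of lower level, so that $U_x$ becomes a connected set hanging beneath $\widehat U_x$. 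Each branch path $E_e$, $e=xy$, tracks $\widehat E_e$ but is pushed into the buffer zone of every bag it enters, which keeps it at distance $\ge K$ from all branch sets; distinct branch paths are steered into different portions of the buffer zones so as to stay $K$-far from one another, and $E_e$ is spliced onto $U_x,U_y$ at its ends. The obstructions this must overcome are exactly that the bags of a branch set may be disconnected in $G$ and that two branch sets may occupy neighbouring bags of $H$, so a naive realisation would break $K$-fatness; $2$-connectedness of $J$ is used to re-route branch paths around the branch sets as these devices require. Verifying $K$-fatness then reduces to a distance bookkeeping — branch sets whose $\widehat U$'s are $H$-far are $G$-far because bags have diameter $\le R(K)$, the downward augmentation and buffer zones must be shown to provide the remaining $K$-separations, and distances to branch paths are controlled by the buffer width $\ge K$. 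The main obstacle, and where I expect genuinely new ideas to be needed, is that the two demands ``super-layers short enough for the no-fat-$K_{2,t}$ hypothesis to yield the diameter bound'' and ``super-layers wide enough to house the buffers and the downward re-routing for \emph{every} $2$-connected minor of $H$ at once'' pull against each other, and reconciling them is exactly what forces the intricate adaptive construction together with the buffer-zone technique sketched above.
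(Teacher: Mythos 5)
Your architecture matches the paper's surprisingly closely: adaptive (non-fixed) layer heights, a buffer zone inside each bag reserved for branch paths, branch sets augmented downwards into lower layers to repair disconnectedness, and a lifting lemma promoting $2$-connected minors of $H$ to $K$-fat minors of $G$, with the no-fat-minor hypothesis entering only to bound bag diameters via an explicit fat-minor extraction. But as written the plan has a step that would fail and omits the ingredients that make the construction terminate with bounded diameters. The failing step is ``cutting each $S_j$ into its $c$-near-components, refining these further, if needed, into pieces of bounded $G$-diameter'': two pieces obtained by refining a $c$-near-connected set lie within distance $c$ of each other, so the corresponding bags (and hence the lifted branch sets) cannot be kept $K$-far apart, and the lifting collapses exactly in the situation the refinement was meant to rescue. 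The paper moves in the opposite direction: it first \emph{proves}, from the absence of a $K$-fat $\Theta_t$ minor (working with $\Theta_t$ rather than $K_{2,t}$, via a subdivision lemma), that each $3K$-near-component of the boundary of a component of $G-G^n$ already has diameter $O(tK)$ and that there are at most $t-1$ of them, and then \emph{coarsens}, merging near-components that are too close by a merging lemma whose output diameter is bounded only because the number of pieces being merged is bounded a priori.

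That last point is the second gap: nothing in your sketch controls how many components of $G-G^n$ attach to two or more of the already-built bags, nor how many pieces accumulate in a single merging step. The paper needs a separate fat-$\Theta_t$ extraction (its Lemma~\ref{lem:FatK2t:Components}) together with the invariant that every component of $G-G^{n-1}$ meets at most $t-1$ bags of the current partition; without these, the merging can cascade and the diameter bound $R(K)$ is lost. Relatedly, your proposed extraction for the diameter bound --- two descending BFS-geodesics from $x$ and $y$ as branch sets joined by $t$ detours --- is not obviously $K$-fat (the two geodesics converge near the root); the paper instead takes one branch set inside the already-constructed inner graph $G^n$ (far from the component $C$) and the whole component $C$ as the other, with the $t$ far-apart boundary vertices supplying short branch paths. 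These are not cosmetic omissions: you correctly identify them as the technical cores, but the proposal does not yet contain the ideas that resolve them.
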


Let us first show that Theorem~\ref{thm:FatK2t:GraphPartition} implies Theorem~\ref{thm:FatK2t}:

\begin{proof}[Proof of \Tr{thm:FatK2t} given \Tr{thm:FatK2t:GraphPartition}]
    Fix $t, K \in \N$, and let~$G$ be a graph with no $K$-fat $K_{2,t}$ minor. Let $(H, (V_h)_{h\in V(H)})$ be an $R$-bounded graph-partition of~$G$ as provided by Theorem~\ref{thm:FatK2t:GraphPartition}. Then $G$ is $(R + 1, R/(R+1))$-quasi-isometric to $H$ by Lemma~\ref{lem part QI}, and $H$ has no $K_{2,t}$ minor.
\end{proof}

In this proof of \Tr{thm:FatK2t} we showed that~$G$ is quasi-isometric to the graph~$H$ from \Tr{thm:FatK2t:GraphPartition}. Since~$H$ has the property that all its $2$-connected minors are $K$-fat minors of~$G$, we have the following corollary: 

\begin{corollary} \label{cor:FatK2t:ForbiddingMoreGraphs}
    Fix $t \in \N$, and let $\cj$ be a class of finite, $2$-connected graphs containing $K_{2,t}$. 
    Then there exists a function $f: \N \rightarrow \N^2$ such that 
    every graph with no $K$-fat minor  in $\cj$ is $f(K)$-quasi-isometric to a graph with no minor in~$\cj$. 
    \qed
\end{corollary}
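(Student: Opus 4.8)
The statement follows almost directly from \Tr{thm:FatK2t:GraphPartition}, once one notices that the graph $H$ it produces carries more information than is used in the deduction of \Tr{thm:FatK2t}: \emph{every} $2$-connected minor of $H$, and not just $K_{2,t}$, lifts to a $K$-fat minor of $G$. So forbidding a $2$-connected graph as a $K$-fat minor of $G$ automatically forbids it as a minor of $H$. The plan is thus to rerun the proof of \Tr{thm:FatK2t} from \Tr{thm:FatK2t:GraphPartition}, but keeping track of the whole class $\cj$ instead of the single graph $K_{2,t}$.

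Concretely, fix $t\in\N$ and a class $\cj$ of finite $2$-connected graphs with $K_{2,t}\in\cj$, and let $R\colon\N\to\N$ be the function supplied by \Tr{thm:FatK2t:GraphPartition}. Given $K\in\N$ and a graph $G$ with no $K$-fat minor in $\cj$, I would first note that, since $K_{2,t}\in\cj$, the graph $G$ has in particular no $K$-fat $K_{2,t}$ minor. Hence \Tr{thm:FatK2t:GraphPartition} yields an honest, $R(K)$-bounded graph-partition of $G$ over a graph $H$ whose every $2$-connected multigraph minor is a $K$-fat minor of $G$. By \Lr{lem part QI}, $G$ is $(R(K)+1,\,R(K)/(R(K)+1))$-quasi-isometric to $H$; since any $(M,A)$-quasi-isometry is also an $(M,A')$-quasi-isometry for $A'\geq A$, we may set $f(K):=(R(K)+1,\,1)\in\N^2$.

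It then remains to verify that $H$ has no minor in $\cj$. Suppose not, say $J\in\cj$ is a minor of $H$. As $J$ is a finite $2$-connected graph, it is in particular a $2$-connected multigraph that is a minor of $H$, so by the defining property of $H$ it is a $K$-fat minor of $G$; since $J\in\cj$, this contradicts the hypothesis that $G$ has no $K$-fat minor in $\cj$. I do not expect any genuine obstacle in this argument: all the substance lies in \Tr{thm:FatK2t:GraphPartition}, and the corollary only exploits that its conclusion quantifies simultaneously over all $2$-connected minors of $H$.
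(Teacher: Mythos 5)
Your proposal is correct and matches the paper's intended argument exactly: the paper derives this corollary from the same observation that the graph $H$ of \Tr{thm:FatK2t:GraphPartition} has the property that all its $2$-connected minors are $K$-fat minors of $G$, so no member of $\cj$ can be a minor of $H$, while \Lr{lem part QI} supplies the quasi-isometry. No issues.
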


Our proof of Theorem~\ref{thm:FatK2t:GraphPartition} will be divided into two steps. The first step is to structure our graph $G$ as an $H$-partition as in Lemma~\ref{lem part QI}, but with additional properties (Lemma~\ref{lem:ObtainingaGPwithNiceProperties} below). The second step is to show that these properties imply that any 2-connected subgraph of $H$ is a $K$-fat minor of $G$ (Lemma~\ref{lem:GPwithNiceProperties}).
To describe these additional properties (\ref{itm:FatK2t:NEW:Layers}--\ref{itm:FatK2t:NEW:DistanceBtwBags} below), we need the following definitions.

A \defi{rooted} graph is a pair $(H,s)$ where $H$ is a graph and $s$ is one of its  vertices, called its \defi{root}. We will sometimes omit $s$ from the notation if it is clear from the context. A rooted graph $(H,s)$ has a natural layering: we denote by $L^i = L^i_{H,s} := \{h \in V(H) : d_H(s,h) = i\}$ the \defi{$i$-th layer} of $H$. Given a vertex $h \in V(H)$ we denote by $i_h = i_{h,s}$ the unique integer satisfying $h \in L^{i_h}$.

Let $\ch= (H, (V_h)_{h \in V(H)})$ be a graph-partition of a graph~$G$ over a  graph~$H$.
If $H$ is rooted, then for every $n \in \N$ we let $G^n=G^n_\ch$  denote the  subgraph of~$G$ induced by those vertices that are contained in partition classes~$V_h$ of nodes $h$ in the layers of $H$ up to $L^n$, i.e.\ $G^n := G[\bigcup_{i \leq n} \bigcup_{h \in L^i} V_h]$.

All graphs $H$ used in graph-partitions $\ch = (H, (V_h)_{h \in V(H)})$ in the remainder of this paper will be rooted, and we will ensure that 
\begin{enumerate}[label=\rm{(\roman*)}]
    \item \label{itm:FatK2t:NEW:Layers} for all $i \in \N$ the layer $L^i$ is an independent set,
\end{enumerate}
i.e.\ there are no edges $xy\in E(H)$ with $x,y\in L^i$.
In particular, $H$ is bipartite, and for every edge $gh \in E(H)$ there exists $i \in \N$ such that $g \in L^i$ and $h \in L^{i+1}$.

Given $\ch$ as above, and a node $h$ of $H$ which is not the root, we let \defi{$\partial^\downarrow_h$} be the set of vertices of $V_h$ that send an edge to some vertex of $G^{i_h-1}$.

The \defi{height $R_h$} of a node $h$ of $H$ is the maximum distance 
$\max_{v\in V_h} d(\partial^\downarrow_h,v)$ of one of its vertices from its `bottom' $\partial^\downarrow_h$. We say that $V_h$ is \defi{level}, if 
\begin{enumerate}[label=\rm{(\roman*)}]
\setcounter{enumi}{1}
  \item \label{itm:FatK2t:NEW:Bags} $V_h = B_{G-G^{ i_h-1}}(\partial^\downarrow_h, R_h)$, 
  \end{enumerate}
with the exception that for the root $s$ of $H$, we say that $V_s$ is \defi{level} if there is a vertex $o \in V_s$ such that $V_s = B_G(o, R_s)$. In that case, we assume that some such $o$ is fixed, and let \defi{$\partial^\downarrow_h$} be the singleton set containing $o$. In particular, $V_s$ then satisfies \ref{itm:FatK2t:NEW:Bags}. 

Recall that we are trying to produce a graph-partition \ch\ of our graph \g as in Theorem~\ref{thm:FatK2t:GraphPartition}, so that every 2-connected minor $J$ of $H$ is a $K$-fat minor of~\G. The naive way to try to turn $J<H$ into a $K$-fat minor of $G$ is to replace each vertex $h\in V(H)$ in the model of $J$ by $V_h$. 
But this is too naive for two reasons: firstly, the $V_h$ are not necessarily connected, and secondly, they are not necessarily $K$-far apart when we want them to be. To address these issues, instead of using a $V_h$ in our branch sets, we will instead use a connected region of $G$ around $\partial^\downarrow_h$. This region (depicted in (dark) blue in Figure~\ref{fig:FatK2t:Lemma1}) will consist of a subgraph of $V_h$ of height less than $R_h -K$, as well as an \defi{undergrowth}, i.e.\ a subgraph of the layer below $i_h$ (hence outside $V_h$) used to ensure connectedness. We use the following notation to describe these subgraphs precisely. For $h\in V(H)$ and $R\in \N$, let 
$$\partial^\uparrow_h(R):= B_{G-G^{i_h-1}}(\partial^\downarrow_h, R).$$
In particular, \ref{itm:FatK2t:NEW:Bags} can be reformulated as $V_h=\partial^\uparrow_h(R_h)$, but we will use this notation with $R< R_h$ to capture a shorter subgraph of $V_h$. To define the aforementioned undergrowth, we similarly introduce
$$\partial^\downarrow_h(r):= B_G(\partial^\downarrow_h, r) \setminus \partial_h^\uparrow(r)$$
for $h \in V(H)$ and $r \in \N$. We remark that we think of $\partial_h^\downarrow(r)$ as lying `below' $\partial^\downarrow_h$ and being mostly contained in $G^{i_h-1}$. In fact, whenever we use $\partial^\downarrow_h(r)$, we will make sure that for most other nodes $g \in V(H)$ in the same layer as $h$, their $\partial_g^\downarrow$ is more than $r$ far apart from $\partial_h^\downarrow$, so that $\partial^\downarrow_h(r)$ cannot enter $G^{i_h}$ through $\partial_g^\downarrow$ (and hence will be disjoint from $V_g$). (The only exception will be nodes $g \in L^{i_h}$ that can be separated from $h$ by removing a single node of $H$ (see \ref{itm:FatK2t:NEW:DistanceBtwBags} below).) 

\medskip
The second step of our proof of Theorem~\ref{thm:FatK2t:GraphPartition} mentioned above is made precise by the following lemma (see Figure~\ref{fig:FatK2t:Lemma1} for a sketch of the properties \ref{itm:FatK2t:NEW:Bags} to \ref{itm:FatK2t:NEW:DistanceBtwBags}):

\begin{lemma} \label{lem:GPwithNiceProperties}
    Let $K, \ell \in \N$, let $H$ be a rooted graph, and let~$G$ be a graph with an honest graph-partition $(H, (V_h)_{h \in V(H)})$ satisfying \ref{itm:FatK2t:NEW:Layers} and \ref{itm:FatK2t:NEW:Bags} \fe\ $h$.
    Suppose every $h \in V(H)$ has height $R_h\geq \ell+K$, and there is $r_h\in \N$ with $0 < r_h \leq \ell$ such that
    \begin{enumerate}[label=\rm{(\roman*)}]
        \setcounter{enumi}{2}
        \item \label{itm:FatK2t:NEW:Connected} $\partial^\uparrow_h(R_h - \ell - K) \cup \partial^\downarrow_h(r_h)$ is connected, and
        \item \label{itm:FatK2t:NEW:DistanceBtwBags} for all non-adjacent $g \neq h \in V(H)$ either $d_G(V_g, V_h) \geq 2\cdot\max\{r_g,r_h\}+3K$,
        or there is a node in $H$ that separates $g,h$.
    \end{enumerate}
    Then every $2$-connected subgraph of~$H$ is a $K$-fat minor of~$G$.
\end{lemma}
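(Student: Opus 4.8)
Let $J$ be a $2$-connected subgraph of $H$. The plan is to build a $K$-fat model $(\cu', \ce')$ of $J$ in $G$ by assigning to each vertex $h \in V(J)$ a connected branch set $\widehat V_h := \partial^\uparrow_h(R_h - \ell - K) \cup \partial^\downarrow_h(r_h)$, which is connected by~\ref{itm:FatK2t:NEW:Connected}, and by routing the branch path for each edge $gh \in E(J)$ through the ``buffer zone'' $V_g \setminus \partial^\uparrow_g(R_g - \ell - K)$ (and the symmetric part in $V_h$), which has width at least $\ell + K$ by the height assumption $R_h \ge \ell + K$. Concretely, for an edge $gh$ with $g \in L^i$, $h \in L^{i+1}$ (using~\ref{itm:FatK2t:NEW:Layers}), honesty gives an edge $uv \in E(G)$ with $u \in V_g$, $v \in V_h$; I would take $v$ together with a path in $G - G^{i}$ from $v$ down to $\partial^\downarrow_h$ (of length $\le R_h$), then continue via $u$ and a path inside $V_g$ from $u$ to $\partial^\downarrow_g$, yielding a $\partial^\downarrow_g$--$\partial^\downarrow_h$ path $Q_{gh}$ contained in $V_g \cup V_h$. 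The branch path $E'_{gh}$ is then the subpath of $Q_{gh}$ strictly between $\widehat V_g$ and $\widehat V_h$; since $Q_{gh}$ must cross the full buffer of width $\ge \ell+K$ in at least one of the two bags, this subpath is nonempty and, crucially, every internal vertex is at distance $> K$ from the endpoints' branch sets on that side — this is exactly what the buffer zone buys us.

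The core of the argument is verifying the four required distance inequalities. \emph{(Branch set vs.\ branch set.)} For distinct $g, h \in V(J)$: if $gh \notin E(H)$, apply~\ref{itm:FatK2t:NEW:DistanceBtwBags}. The case where a node of $H$ separates $g$ from $h$ cannot occur for a $2$-connected $J$ if that separating node lies in $J$; if it lies outside $J$ one still has to rule this out — I expect one shows that $2$-connectivity of $J$ forces any two of its vertices to be ``close'' only through $J$ itself, so the relevant separations happen at distance $\ge 2\max\{r_g,r_h\}+3K$ in $G$, and then $\widehat V_g \subseteq B_G(V_g, r_g)$ (as $\partial^\downarrow_h(r_h) \subseteq B_G(\partial^\downarrow_h, r_h) \subseteq B_G(V_h,r_h)$) gives $d_G(\widehat V_g, \widehat V_h) \ge (2\max\{r_g,r_h\}+3K) - r_g - r_h \ge 3K \ge K$. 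If $gh \in E(H)$ (so they are adjacent in $H$, possibly in $J$), then $\widehat V_g \subseteq \partial^\uparrow_g(R_g - \ell - K) \cup \partial^\downarrow_g(r_g)$ and the undergrowth $\partial^\downarrow_g(r_g)$ sits in layer $i_g - 1$ while $\partial^\uparrow_h$-part of $\widehat V_h$ sits in layers $\ge i_h = i_g + 1$; the separation comes from the combined buffer of width $\ge \ell+K$ in $V_g$ (and the analogous one in $V_h$) together with the fact that $r_g, r_h \le \ell$, so a short path between them would have to traverse more than $K$ layers — this needs the layering structure~\ref{itm:FatK2t:NEW:Layers} and a careful count. \emph{(Branch path vs.\ branch path, and branch path vs.\ non-incident branch set.)} Two branch paths $E'_{gh}$, $E'_{g'h'}$ with $\{g,h\} \cap \{g',h'\} = \emptyset$ live in $V_g \cup V_h$ and $V_{g'} \cup V_{h'}$ respectively; reduce to bag-vs-bag distances via the case analysis above, losing at most the path-endpoint slack, which the $+3K$ absorbs. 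If the pairs share one vertex, say $h = h'$, both paths approach $\widehat V_h$ from within the buffer of $V_h$ but are routed to hit $\partial^\downarrow_h$ along essentially disjoint directions — here I would either make the endpoints in $\partial^\downarrow_h$ coincide (allowed, since branch paths sharing a branch set may meet it) and argue the internal vertices stay $\ge K$ apart using that each is $> K$ deep into $V_h$ on ``opposite sides'', or, more robustly, invoke a fattening trick as in Lemma~\ref{lem:SubdivingEdgesOfFatMinor} after proving a $3K$-fat model and dividing.

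For the branch-path vs.\ incident-branch-set pairs there is nothing to check (the definition of $K$-fat exempts them). I would organize the write-up by first fixing all the branch paths, then proving the single uniform estimate ``for $x \in \widehat V_g$ (or on $E'_{gh}$) and $y \in \widehat V_h$ (or on $E'_{g'h'}$) that are not in an incident pair, $d_G(x,y) \ge K$'', splitting on whether the underlying $H$-vertices are equal, adjacent, or non-adjacent, and in the last case on the dichotomy in~\ref{itm:FatK2t:NEW:DistanceBtwBags}.

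\textbf{Main obstacle.} The delicate point is handling~\ref{itm:FatK2t:NEW:DistanceBtwBags}'s escape clause ``there is a node in $H$ that separates $g,h$'': one must show this clause is harmless, which is precisely where $2$-connectivity of $J$ enters — a single $H$-node cannot separate two vertices of a $2$-connected subgraph \emph{within that subgraph}, but it could separate them in $H$, and then one needs that the branch paths of $J$ (which only use edges/paths corresponding to $J$'s edges) genuinely avoid the problematic region. Making this precise — that each branch path $E'_{gh}$ stays inside $V_g \cup V_h$ and that the only bags a branch set $\widehat V_h$ can accidentally invade are those of $H$-neighbours or single-node-separated nodes, which are then controlled by the layering — is the crux, and I expect it to require the full strength of the ``$\partial^\downarrow_h(r_h)$ cannot enter $G^{i_h}$'' discussion flagged in the text preceding the lemma.
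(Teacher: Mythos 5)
Your overall architecture (branch sets $V_h^\uparrow\cup V_h^\downarrow$, branch paths routed through the buffer zone, case split on equal/adjacent/non-adjacent nodes of $H$) matches the paper's, and your dispatch of the separator clause is essentially right — though you treat it as the main obstacle when it is in fact a one-liner: for $g,h\in V(J)$ with $J$ $2$-connected, $J-z$ is connected for \emph{every} single node $z$ of $H$ (whether or not $z\in V(J)$), so $z$ cannot separate $g$ from $h$ in $H$, and the distance alternative of \ref{itm:FatK2t:NEW:DistanceBtwBags} must hold. The real crux lies elsewhere, and there your proposal has a genuine gap.

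The gap is in the branch paths. You take the \emph{entire} subpath of $Q_{gh}$ between $\widehat V_h$ and $\widehat V_g$ as the branch path $E'_{gh}$; this path has length up to $\ell+K$, not $K$. Consequently: (a) your reduction of path-vs-path and path-vs-nonincident-set distances to bag distances loses $2(\ell+K)$ rather than $2K$, which the $+3K$ in \ref{itm:FatK2t:NEW:DistanceBtwBags} does not absorb; and (b), fatally, for two edges $gh$ and $gh'$ of $J$ sharing their \emph{lower} endvertex $g$, both branch paths descend the full buffer of $V_g$ and terminate adjacent to $V_g^\uparrow$, and nothing prevents them from converging, coming within distance $<K$, or even intersecting near the bottom. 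Your proposed fixes do not work: letting the endpoints coincide in $\partial^\downarrow_g$ violates the definition of a ($K$-fat) model, since distinct branch paths must be internally disjoint and at distance $\geq K$ even when they share an end branch set (the only exemption in the definition is a branch path versus an \emph{incident branch set}); the claim that the two paths approach along ``essentially disjoint directions'' is unsupported and false in general; and Lemma~\ref{lem:SubdivingEdgesOfFatMinor} addresses subdividing the pattern graph, not convergence of two branch paths inside one bag. The paper's resolution is the \emph{tentacle} device you are missing: the branch path for $gh$ is truncated to length exactly $K$ at the \emph{top} of the lower bag, where its starting point lies in the undergrowth $V_h^\downarrow$ of the upper bag; for two edges sharing the lower endvertex $g$, those starting points lie in the undergrowths of distinct nodes of the same layer, which are non-adjacent by \ref{itm:FatK2t:NEW:Layers} and hence $\geq 3K$ apart by \ref{itm:FatK2t:NEW:DistanceBtwBags}, giving $3K-K-K=K$. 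The remaining descent from the truncation point down to $V_g^\uparrow$ is absorbed into the branch set $U_g$ itself, where intersections among these ``tentacles'' are harmless; the price is a further (and genuinely delicate) verification that each tentacle stays $K$-far from the branch sets of the adjacent nodes, which the paper does by splitting the tentacle into an upper and a lower part and using \eqref{eq:FatK2t:Distance} together with the height bound $R_h\geq\ell+K$ — an analysis your sketch of the adjacent-bags case does not supply.
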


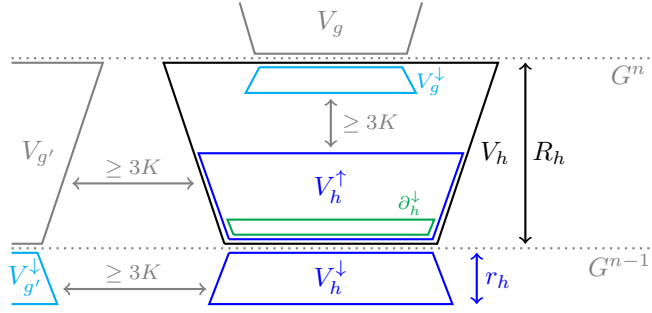
\begin{figure}[ht]
    \centering
    \begin{tikzpicture}[scale=0.4]

        \draw[gray,dotted,thick] (0,-0.15)--(21,-0.15);
        \draw[gray,dotted,thick] (0,6.15)--(21,6.15);

        \draw[thick,gray] (0,0)--(1,0)--(3,6)--(0,6);
        \draw[cyan,thick] (0,-0.3)--(0.85,-0.3)--(1.5,-2)--(0,-2);

        \draw[thick] (7,0)--(14,0)--(16,6)--(5,6)--(7,0);
        \draw[blue,thick] (7.15,0.15)--(13.85,0.15)--(14.85,3)--(6.15,3)--(7.15,0.15);
        \draw[blue,thick] (7.15,-0.3)--(13.85,-0.3)--(14.5,-2)--(6.5,-2)--(7.15,-0.3);

        \draw[gray,thick] (7.5,8)--(8,6.3)--(13,6.3)--(13.5,8);
        \draw[cyan,thick] (8.15,5.85)--(12.85,5.85)--(13.3,5)--(7.7,5)--(8.15,5.85);

        \draw[Green,thick] (7.3,0.3)--(13.7,0.3)--(13.9,0.8)--(7.1,0.8)--(7.3,0.3);

        \draw[gray,thick,<->] (2,2)--(6,2);
        \draw[gray,thick,<->] (1.7,-1.5)--(6.4,-1.5);
        \draw[gray,thick,<->] (10.5,3.2)--(10.5,4.8);
        \draw[thick,<->] (16.9,0)--(16.9,6);
        \draw[blue,thick,<->] (15.3,-0.3)--(15.3,-2);

        \draw[gray] (20.3,5.6) node {$G^{n}$};
        \draw[gray] (20,-0.7) node {$G^{n-1}$};

        \draw[gray] (0.9,3) node {$V_{g'}$};
        \draw[cyan] (0.45,-1.1) node {$V_{g'}^\downarrow$};
        \draw[blue] (10.5,-1.1) node {$V_h^\downarrow$};
        \draw[blue] (10.5,1.9)  node {$V_h^{\uparrow}$};
        \draw[Green] (13.1,1.35) node {\scriptsize{$\partial_h^{\downarrow}$}};
        \draw[cyan] (13.8,5.4) node {\footnotesize{$V_g^\downarrow$}};
        \draw[gray] (10.5,7.3) node {$V_g$};
        \draw (15.9,3) node {$V_h$};

        \draw (17.7,3) node {$R_h$};
        \draw[blue] (16,-1.2) node {$r_h$};

        \draw[gray] (11.8,4) node {\footnotesize{$\geq 3K$}};
        \draw[gray] (4,2.5) node {\footnotesize{$\geq 3K$}};
        \draw[gray] (4,-1) node {\footnotesize{$\geq 3K$}};
        
    \end{tikzpicture}
    \vspace{-2em}
    \caption{Depicted is a partition class $V_h$ of the graph-partition in Lemma~\ref{lem:GPwithNiceProperties}. The (dark) blue vertex set $V_h^\uparrow \cup V_h^\downarrow$ is connected by \ref{itm:FatK2t:NEW:Connected}, and $d_G(V_h \cup  V_h^\downarrow, V_{g'} \cup V^\downarrow_{g'}) \geq 3K$ holds by \ref{itm:FatK2t:NEW:DistanceBtwBags}.}
    \label{fig:FatK2t:Lemma1}
\end{figure}

Given the setup of this lemma, let $V_h^\uparrow:= \partial^\uparrow_h(R_h - \ell - K)$ and $V_h^\downarrow:= \partial^\downarrow_h(r_h)$. 
Thus \ref{itm:FatK2t:NEW:Connected} says that $V_h^\uparrow \cup V_h^\downarrow$ is connected. 

Let us briefly sketch how Lemma~\ref{lem:GPwithNiceProperties} is proved.  
Given a 2-connected $J \subseteq H$, we build a model of $J$ in \g by replacing each  vertex $h\in V(J)$  by $V_h^\uparrow \cup V_h^\downarrow$, which is connected by \ref{itm:FatK2t:NEW:Connected} as just mentioned. For each edge $e=hg\in E(J)$ where $g$ is in the layer above that of $h$, we model $e$ by a branch path within $V_h$ incident with the undergrowth $V^\downarrow_g$ of $g$ inside $V_h$. We have tuned our parameters (by demanding $r_h\leq \ell)$ so that each $V_h$ has a \defi{buffer zone} above $V_h^\uparrow$ and below all undergrowths protruding from the layer above, where it is safe to choose the branch paths (which are geodesics of length $K$).  
We then use \ref{itm:FatK2t:NEW:DistanceBtwBags} to show that the branch sets in $G$ are pairwise far apart.

\medskip
The final step in the proof of Theorem~\ref{thm:FatK2t:GraphPartition} will then be to show that if a graph does not contain $K_{2,t}$ as a fat minor, then it has a graph-partition satisfying \ref{itm:FatK2t:NEW:Layers} to \ref{itm:FatK2t:NEW:DistanceBtwBags} whose partition classes all have small radius. In fact, it will be more convenient to exclude $\Theta_t$ as a fat minor, where $\Theta_t$ denotes the multi-graph on two vertices with $t$ parallel edges. Note that $K_{2,t}$ can be obtained from $\Theta_t$ by subdividing each of its edges precisely once.

\begin{lemma} \label{lem:ObtainingaGPwithNiceProperties}
    There exists a function $R : \N^2_{\geq 1} \rightarrow \N$ satisfying the following. Let $t, K \in \N_{\geq 1}$ with $t \geq 3$, and let $G$ be a graph with no $K$-fat $\Theta_t$ minor. Then~$G$ admits an $R(t, K)$-bounded, honest graph-partition satisfying \ref{itm:FatK2t:NEW:Layers} to \ref{itm:FatK2t:NEW:DistanceBtwBags} for some $\ell \in \N$. 
\end{lemma}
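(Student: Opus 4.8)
The plan is to build the graph-partition by a breadth-first search from a fixed root of $G$, forming each partition class from a \emph{variable-length} run of consecutive distance layers --- the length being dictated by how the connectivity of $G$ above the class evolves --- and to use the hypothesis ``no $K$-fat $\Theta_t$ minor'' both to bound that length and to control how close distinct classes can come. Here $\ell$ (the ``buffer zone'' thickness) and $R(t,K)$ (a cap on class heights) will be chosen as large, explicitly polynomial, functions of $t$ and $K$, which is what ultimately yields the bound on $f(K)$ claimed in the introduction.

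Concretely, I would fix $o\in V(G)$, put $D^j:=\{v:d_G(o,v)=j\}$, and construct the layers $L^0,L^1,\dots$ of $H$ together with the classes $V_h$ recursively in the $H$-layer index $n$, maintaining the region $G^n$ and its frontier $F_{n+1}:=N_G(G^n)$, with $G^{-1}=\emptyset$ and $F_0=\{o\}$. At step $n$ I partition $F_n$ into \emph{clusters}; each cluster becomes the set $\partial^\downarrow_h$ of one node $h\in L^n$, and I set $V_h:=B_{G-G^{n-1}}(\partial^\downarrow_h,R_h)$ for a chosen height $R_h$, with $V_s:=B_G(o,R_s)$ for the root as in the statement, taking $H$-edges to be the induced ones and discarding empty nodes; then \ref{itm:FatK2t:NEW:Layers}, \ref{itm:FatK2t:NEW:Bags}, and honesty hold by construction. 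There are two adaptive choices. First, the clusters of $F_n$ are its near-components for short detours: two vertices of $F_n$ lie in a common cluster iff they are joined by a path of $G$ that stays within distance $\le\ell$ of $F_n$. This makes each $\partial^\downarrow_h$ connected inside $B_G(\partial^\downarrow_h,r_h)$ for some $r_h\in\{1,\dots,\ell\}$, namely the least radius for which this holds. Second, each $R_h$ is taken maximal subject to $2\ell+K\le R_h\le R(t,K)$ and to the condition that extending $V_h$ one layer further would neither make its ball collide with the ball of another cluster of the same layer nor let the connectivity above $\partial^\downarrow_h$ split into pieces that are mutually close but not separated in $H$ (any region that cannot reach the lower bound $2\ell+K$ is merged into a neighbouring cluster before heights are assigned). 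Making choices one and two coexist --- so that the classes of a layer are pairwise disjoint metric balls \emph{and} uniformly of height $\ge 2\ell+K$ --- is itself a delicate part of the bookkeeping that the recursion has to get right.

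Property \ref{itm:FatK2t:NEW:Connected} then comes out cleanly: writing $r:=r_h$ and using $r\le\ell\le R_h-\ell-K$, one has $\partial^\uparrow_h(r)\cup\partial^\downarrow_h(r)=B_G(\partial^\downarrow_h,r)$ since $\partial^\uparrow_h(r)\subseteq B_G(\partial^\downarrow_h,r)$, and $B_G(\partial^\downarrow_h,r)$ is connected --- it contains $\partial^\downarrow_h$, which is connected inside it by the choice of $r$, and every other vertex reaches $\partial^\downarrow_h$ along a geodesic all of whose vertices lie within $r$ of $\partial^\downarrow_h$. As $\partial^\uparrow_h(r)\subseteq V_h^\uparrow$, this shows $V_h^\uparrow\cup V_h^\downarrow=V_h^\uparrow\cup B_G(\partial^\downarrow_h,r)$ is a union of two connected sets both containing $\partial^\downarrow_h$, hence connected; and $V_h^\uparrow$ is connected because each of its vertices reaches $\partial^\downarrow_h$ along an initial segment of a geodesic of $G-G^{n-1}$ of length $\le R_h-\ell-K$.

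The heart of the proof, and where I expect almost all the work to lie, is \ref{itm:FatK2t:NEW:DistanceBtwBags} together with the diameter bound, both established by contradiction: any violation is converted into a $K$-fat $\Theta_t$ minor of $G$. If the natural height of some class wants to exceed $R(t,K)$, then the connectivity above its cluster stayed ``thick and unfragmentable'' over more than $R(t,K)$ distance layers; a fan/Menger argument inside this region yields $t$ internally disjoint near-geodesics, and because the region did not fragment --- i.e.\ is also ``wide'' --- these can be rerouted through $t$ pairwise $\ge K$-far columns, producing a $K$-fat $\Theta_t$ minor. Similarly, if non-adjacent $g\neq h$ satisfy $d_G(V_g,V_h)<2\max\{r_g,r_h\}+3K$ and no single node of $H$ separates $g$ from $h$, then a shortest $V_g$--$V_h$ path, together with the fact that $g$ and $h$ remain connected in $H$ after deleting any one node (two ``independent'' detours, by non-separability), can be amplified --- using the guaranteed thickness $\ge 2\ell+K$ of all intervening classes, with $\ell$ chosen large --- into $t$ pairwise $K$-far $V_g$--$V_h$ paths, again a $K$-fat $\Theta_t$ minor. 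The genuine obstacle is exactly this amplification: turning a $2$-connectivity-type fact about $H$ plus uniform thickness of classes into \emph{$t$} paths that are not merely disjoint but pairwise at distance $\ge K$ and at distance $\ge K$ from the branch sets, and simultaneously choosing $\ell$, the $r_h$, the $R_h$ and the cap $R(t,K)$ so that the thresholds in \ref{itm:FatK2t:NEW:DistanceBtwBags} come out exactly as stated and the adaptive rule for the heights provably terminates below the cap --- which is also where the hypothesis $t\ge 3$ enters.
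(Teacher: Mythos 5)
Your high-level frame (BFS layering from a root, variable-height bags, buffer zones, using the no-fat-$\Theta_t$ hypothesis to control sizes) matches the paper, but the two steps you yourself identify as ``the heart of the proof'' are exactly where the proposal has genuine gaps, and the mechanisms you sketch for them are not ones that can be made to work. First, the height/diameter bound. You propose to grow each $R_h$ maximally until collision or fragmentation, capped at $R(t,K)$, and to argue that a bag wanting to exceed the cap yields a region that is ``thick and unfragmentable'' over many layers, whence a fan/Menger argument gives a $K$-fat $\Theta_t$ minor. This implication is false as stated: a long path (or any long thin region) never fragments and never collides with anything, so under your rule its bag would grow indefinitely, yet it contains no $\Theta_2$ minor at all. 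The paper's mechanism is different and does not argue by contradiction on the height: the heights are bounded \emph{a priori}. One first shows (\cref{lem:NearComps:NumberAndRadius}) that each component boundary $\partial_G C$ has at most $t-1$ $3K$-near-components, each of diameter less than $6K(t-1)$, and (\cref{lem:FatK2t:Components}) that across all components attaching to two or more existing bags there are at most $(t-1)^3(t-2)$ such pieces pairwise $3K$ apart --- these two lemmas are the \emph{only} places the fat-minor hypothesis enters. A merging lemma (\cref{lem:MergingLemma}) then coarsens these boundedly many pieces so that the resulting classes are pairwise far apart, and its output height is at most $r+\lfloor nd/2\rfloor$ with $n$ bounded by the counts above; this is what caps $R_h$ by $L'(t,K)$.

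Second, property \ref{itm:FatK2t:NEW:DistanceBtwBags}. You propose to derive a contradiction from a violation by ``amplifying'' a single short $V_g$--$V_h$ path, together with non-separability in $H$, into $t$ pairwise $K$-far paths. This is a coarse-Menger-type amplification, and nothing in your setup supplies it; the paper explicitly notes that such statements fail in general, and it never argues this way. Instead \ref{itm:FatK2t:NEW:DistanceBtwBags} is \emph{ensured by construction}: candidate classes in the same layer that would be too close (including after the buffer zone of height $\ell+K$ is added, which requires a \emph{second} round of merging) are merged into a single class, classes in different components of $G-G^{n-1}$ are handled by the separator alternative, and classes in layers differing by at least two are automatically $\geq 2\ell+3K$ apart because every intervening bag has height at least $2\ell+3K$. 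Your treatment of \ref{itm:FatK2t:NEW:Connected} is essentially fine (modulo the minor point that $V_h^\uparrow$ alone need not be connected --- only $V_h^\uparrow\cup V_h^\downarrow$ is), but the bookkeeping you flag as ``delicate'' (making the clustering, the depths $r_h$ and the heights $R_h$ coexist) is precisely the two-stage merging argument, and deferring it leaves the lemma unproved.
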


Together, Lemmas~\ref{lem:GPwithNiceProperties} and~\ref{lem:ObtainingaGPwithNiceProperties} imply Theorem~\ref{thm:FatK2t:GraphPartition}:

\begin{proof}[Proof of Theorem~\ref{thm:FatK2t:GraphPartition}]
    If $K = 0$, then $G$ itself is $\Theta_t$-minor-free, and the graph-partition $(G, (V_g)_{g \in V(G)})$ with $V_g = \{g\}$ is as desired. So we may assume $K \geq 1$.
    For $t=0$, every graph excluding $K_{2,0}$ as a fat minor has bounded radius, and hence the assertion follows trivially.
    For $t = 1$, it is easy to see that every graph excluding $K_{2,1}$ as fat minor consists only of components that each have bounded diameter, and hence the assertion follows trivially. For $t=2$, the result follows from (the proof of) the $K_3$ case of Conjecture~\ref{conj fat min} (see \cite[Theorem~3.1]{GeoPapMin}) and Lemma~\ref{lem:SubdivingEdgesOfFatMinor}, where we note that in this case, $G$ admits a tree-partition over a tree $T$, which has no $2$-connected minors. Hence, we may assume $t \geq 3$.

    Since $K_{2,t}$ is not a $K$-fat minor of $G$, it follows by Lemma~\ref{lem:SubdivingEdgesOfFatMinor} that $\Theta_t$ is not a $3K$-fat minor of $G$.
    Let $(H, (V_h)_{h \in V(H)})$ be the graph-partition provided by Lemma~\ref{lem:ObtainingaGPwithNiceProperties} for $G, t, 3K$. Let $J$ be a $2$-connected (multi)-graph that is a minor of~$H$, and let $J'$ be an $\subseteq$-minimal subgraph of~$H$ which still contains~$J$ as a minor. It is straight forward to check that~$J'$ is $2$-connected.
    By Lemma~\ref{lem:GPwithNiceProperties}, $J'$ is a $K$-fat minor of~$G$, and so $J$ is a $K$-fat minor of $G$.
\end{proof}

\section{Proof of Lemma~\ref{lem:GPwithNiceProperties}} \label{subsec:FatK2tProof:Lemma1}

For every $h \in V(H)$, recall that 
\begin{align*}
    V_h^\uparrow &:= \partial^\uparrow_h(R_h - \ell - K), \text{ and} \\ 
    V_h^\downarrow &:= \partial^\downarrow_h(r_h) 
\end{align*}
(see Figure~\ref{fig:FatK2t:Lemma1}). 
In particular, $V_h^\uparrow \cup V_h^\downarrow$ is connected by \ref{itm:FatK2t:NEW:Connected}. Let us also remark that by \ref{itm:FatK2t:NEW:Bags} and because $r_h \leq R_h$, we have $V_h^\downarrow = B_{G-G^{i_h-2}}(\partial_h^\downarrow, r_h) \setminus V_h$. 
Let also $L^i := L^i_{H,s}$, for $i \in \N$, denote the $i$-th layer of $H$ with respect to its root $s$.

Let $J$ be a $2$-connected subgraph of~$G$. Our aim is to find a $K$-fat model of $J$ in $G$, and we start with the branch paths.
Let $f = gh \in E(J) \subseteq E(H)$. By \ref{itm:FatK2t:NEW:Layers}, we may assume that $h \in L^{i-1}$ and $g \in L^{i}$ for some $i \in \N$. 
Since $\ch$ is honest, there exists an edge $uv \in E(G)$ with $u \in V_{h}$ and $v \in V_{g}$, and hence $V^\downarrow_g \cap V_h \neq \emptyset$ since $r_g > 0$.
By \ref{itm:FatK2t:NEW:Bags}, observe that there exists a path $Q^f = q_0^f\dots q^f_{R_h+1}$, with $q_0^f=v$ and $q_1^f=u$, such that $q_1^f,\dots,q^f_{R_h+1}\in V_h$ and $q^f_{R_h+1}\in \partial_h^{\downarrow}$. Thus and since $r_g>0$, $\tilde{Q}^f = q^f_{r_g},\dots,q^f_{\ell+K+1}$ is a $V_g^{\downarrow}-V_h^{\uparrow}$ path contained in $V_h$ (of length $\ell+K+1-r_g\geq K+1$). 

We declare the initial segment $E_f := q^{f}_{r_g} \dots q^{f}_{r_g+K}$ of length $K$ of $\tilde{Q}^f$ to be the branch path corresponding to $f$. The remaining subpath $T_f:= q^{f}_{r_g+K} \dots q^f_{\ell+K+1}$ of $\tilde{Q}^f$ will be called the \defi{tentacle} of $f$, and we will make it part of the branch set below, to ensure that each branch path attaches to the branch sets of its end-vertices (see Figure~\ref{fig:FatK2t:FatXMinor}). 
    
To complete our construction of a model of $J$ in $G$, we now define the branch sets $U_x$ as follows:
for each $x \in V(J)$, let $F_x$ be the set of edges of $J$ that are incident with $x$ and whose other endvertex lies in $L^{i_x+1}$, and let (see Figure~\ref{fig:FatK2t:FatXMinor}) 
\begin{equation*}
    U_x := V^\uparrow_x \cup V^\downarrow_x \cup \bigcup_{e \in F_x} T_e \subseteq V_x \cup V_x^\downarrow.
\end{equation*}

We claim that these $U_x$ and $E_e$ form the branch sets and branch paths of a $K$-fat model of $J$ in $G$. 
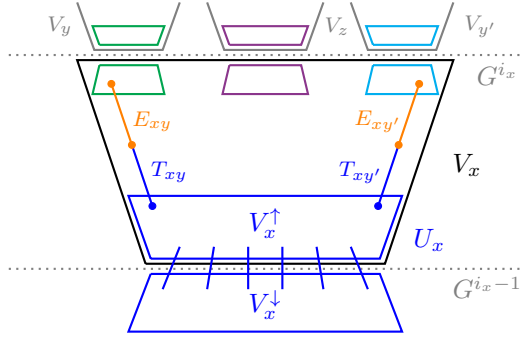
\begin{figure}[ht]
    \centering
        \begin{tikzpicture}[scale=0.45]

    \draw[gray,dotted,thick] (3,-0.15)--(18,-0.15);
    \draw[gray,dotted,thick] (3,6.15)--(18,6.15);

    \draw[thick] (7,0)--(14,0)--(16,6)--(5,6)--(7,0);
    \draw[blue,thick] (7.15,-0.3)--(13.85,-0.3)--(14.5,-2)--(6.5,-2)--(7.15,-0.3);
    \draw[blue,thick] (7.15,0.15)--(13.85,0.15)--(14.5,2)--(6.5,2)--(7.15,0.15);

    \draw[gray,thick] (8.8,7.7)--(9.3,6.3)--(11.7,6.3)--(12.2,7.7);
    \draw[gray,thick] (5,7.7)--(5.5,6.3)--(7.5,6.3)--(8,7.7);
    \draw[gray,thick] (13,7.7)--(13.5,6.3)--(15.5,6.3)--(16,7.7);

    \draw[Green,thick] (5.65,5.85)--(7.35,5.85)--(7.55,5)--(5.45,5)--(5.65,5.85);
    \draw[Fuchsia,thick] (9.45,5.85)--(11.55,5.85)--(11.75,5)--(9.25,5)--(9.45,5.85);
    \draw[cyan,thick] (13.65,5.85)--(15.35,5.85)--(15.55,5)--(13.45,5)--(13.65,5.85);

    \draw[Green,thick] (5.65,6.45)--(7.35,6.45)--(7.55,7)--(5.45,7)--(5.65,6.45);
    \draw[Fuchsia,thick] (9.45,6.45)--(11.55,6.45)--(11.75,7)--(9.25,7)--(9.45,6.45);
    \draw[cyan,thick] (13.65,6.45)--(15.35,6.45)--(15.55,7)--(13.45,7)--(13.65,6.45);

    \draw[blue,thick] (8,0.5)--(7.5,-0.75);
    \draw[blue,thick] (9,0.5)--(8.8,-0.75);
    \draw[blue,thick] (10,0.5)--(10,-0.75);
    \draw[blue,thick] (11,0.5)--(11,-0.75);
    \draw[blue,thick] (12,0.5)--(12.2,-0.75);
    \draw[blue,thick] (13,0.5)--(13.5,-0.75);

    \draw[fill,orange] (6,5.3) circle (.1);
    \draw[thick,orange] (6,5.3)--(6.6,3.5);
    \draw[thick,blue] (6.6,3.5)--(7.2,1.7);
    \draw[fill,orange] (6.6,3.5) circle (.1);
    \draw[fill,blue] (7.2,1.7) circle (.1);

    \draw[fill,orange] (15,5.3) circle (.1);
    \draw[thick,orange] (15,5.3)--(14.4,3.5);
    \draw[thick,blue] (14.4,3.5)--(13.8,1.7);
    \draw[fill,orange] (14.4,3.5) circle (.1);
    \draw[fill,blue] (13.8,1.7) circle (.1);

    \draw[gray] (17.3,5.6) node {$G^{i_x}$};
    \draw[gray] (17,-0.7) node {$G^{i_x-1}$};

    \draw[blue] (15.3,0.7) node {$U_x$};
    \draw (16.4,2.9) node {$V_x$};
    \draw[blue] (10.5,1.2) node {$V_x^\uparrow$};
    \draw[blue] (10.5,-1.2) node {$V_x^\downarrow$};

    \draw[orange] (7.2,4.2) node {\footnotesize{$E_{xy}$}};
    \draw[orange] (13.8,4.2) node {\footnotesize{$E_{xy'}$}};

    \draw[blue] (7.7,2.7) node {\footnotesize{$T_{xy}$}};
    \draw[blue] (13.3,2.7) node {\footnotesize{$T_{xy'}$}};

    \draw[gray] (4.5,7) node {\footnotesize{$V_{y}$}};
    \draw[gray] (12.6,7) node {\footnotesize{$V_{z}$}};
    \draw[gray] (16.8,7) node {\footnotesize{$V_{y'}$}};
    
    \end{tikzpicture}
    \vspace{-2em}
    \caption{Depicted is an illustration of~$U_x$, where $y,y',z \in V(J)$ and $xy, xy' \in E(J)$ and $xz \notin E(J)$.} 
    \label{fig:FatK2t:FatXMinor}
\end{figure}

By \ref{itm:FatK2t:NEW:Connected} and because $q^e_{\ell_e} \in V^\uparrow_x$ for all $e \in F_x$, the sets $U_x$ are connected.  
Since, by definition, every branch path $E_e$ of an edge $e = xy \in E(J)$, with $i_x < i_y$, starts in $q_0^{e} \in V^\downarrow_{y} \subseteq U_y$ and ends in $q^{e}_{K} \in V(T_{xy}) \subseteq U_x$, it follows that $((U_x)_{x \in V(J)}, (E_e)_{e \in E(J)})$ is a model of $J$ once we have shown that all pairs of non-equal and non-incident branch sets and/or paths are disjoint.
We will prove that they are even $K$-far apart in $G$, showing that our model of $J$ is $K$-fat.

For this, let us first note that since $J$ is $2$-connected, it follows by \ref{itm:FatK2t:NEW:DistanceBtwBags} that 
\[
d_G(V_x, V_y) \geq 2\cdot\max\{r_x,r_y\} + 3K 
\tag{$\ast$} \label{eq:FatK2t:Distance}
\]
for all $x, y \in V(J) \subseteq V(H)$ with $xy \notin E(H)$. 
In particular, 
\[
d_G(V_x \cup V^\downarrow_x, V_y \cup V^\downarrow_y) \geq 3K \tag{$\ast\ast$} \label{eq:FatK2t:Distance2}
\]
for all $x, y \in V(J) \subseteq V(H)$ with $xy \notin E(H)$.
    
Let $e = xy, e' = x'y' \in E(J)$ be distinct edges of $J$. 
Since $H$ is bipartite by \ref{itm:FatK2t:NEW:Layers} and hence triangle-free, and because $e \neq e'$, it follows that there are $a \in \{x, y\}$ and $b \in \{x', y'\}$ such that $a \neq b$ and $ab \notin E(H)$. Thus, by \eqref{eq:FatK2t:Distance2} and because $E_e$ and $E_{e'}$ meet $V_a \cup V^\downarrow_a$ and $V_b \cup V^\downarrow_b$, respectively, we have that
\[
d_G(E_e, E_{e'}) \geq d_G(V_{a} \cup V^\downarrow_a, V_{b} \cup V^\downarrow_b) - ||E_e|| - ||E_{e'}|| \geq 3K - K - K = K.
\]

Now let $z \in V(J)$ and $e = xy \in E(J)$ such that $z \notin \{x,y\}$. Once again, because $H$ is triangle-free, there exists $a \in \{x, y\}$ such that $za \notin E(H)$. Hence, as above, 
\[
d_G(U_z, E_e) \geq d_G(V_z \cup V^\downarrow_z, E_e) \geq d_G(V_z \cup V^\downarrow_z, V_{a} \cup V^\downarrow_{a}) - ||E_e|| \geq 3K - K \geq K.
\]
    
Finally, let $x \neq y \in V(J)$. 
If $xy \notin E(H)$, then, by \eqref{eq:FatK2t:Distance2},
\[
d_G(U_x, U_y) \geq d_G(V_x \cup V^\downarrow_x, V_y \cup V^\downarrow_y) \geq 3K \geq K,
\]
where we used that $U_z \subseteq V_{z} \cup V^\downarrow_{z}$ for all $z \in V(J)$.
    
So we may assume that $xy \in E(H)$. Then by \ref{itm:FatK2t:NEW:Layers} and without loss of generality, $x \in L^{i-1}$ and $y \in L^i$ for some $i \in \N$. 
If a shortest path in $G$ between $V^\uparrow_x \cup V^\downarrow_x$ and $V_y \cup V^\downarrow_y$ meets $G^{i-2}$, then by \ref{itm:FatK2t:NEW:Layers} it contains a subpath in $G-G^{i-2}$ from $\partial_h^\downarrow$ (for some $h \in L^{i-1}$) to $N_G(V_h) \cap V(G^{i}-G^{i-1})$. 
In that case, it follows by \ref{itm:FatK2t:NEW:Bags} that 
\[
d_{G}(V^\uparrow_x \cup V^\downarrow_x, V_y \cup V^\downarrow_y) \geq \min\{R_h : h \in L^{i-1}\} +1 \geq \ell+K+1 \geq K. 
\]
Otherwise, we have $d_{G}(V^\uparrow_x \cup V^\downarrow_x, V_y \cup V^\downarrow_y) = d_{G-G^{i-2}}(V^\uparrow_x \cup V^\downarrow_x, V_y \cup V^\downarrow_y)$, and hence, by \ref{itm:FatK2t:NEW:Bags},
\[
d_{G}(V^\uparrow_x \cup V^\downarrow_x, V_y \cup V^\downarrow_y) \geq d_{G-G^{i-2}}(V^\uparrow_x, \partial^\downarrow_y) - r_y \geq (\ell+K+1)-r_y. 
\]
Combining both cases we find
\[
d_{G}(V^\uparrow_x \cup V^\downarrow_x, V_y \cup V^\downarrow_y) \geq (\ell+K+1) - r_y \geq (\ell+K+1)-\ell \geq K.
\tag{$\ast\ast\ast$} \label{eq:FatK2t:Distance3}
\]

It remains to show that $d_G(T_e, V_y \cup V^\downarrow_y) \geq K$ for all edges $e \in F_x$. (Recall that all tentacles of $y$ are contained in $V_y$.) For this, let $e = xz \in E(J)$ with $e \in F_x$ be given. So $z \in L^i$. 
We split $T^e$ into an `upper part' $T^e_1 := V(T^e) \cap B_G(\partial^\downarrow_{z}, r_y + K)$ and a `lower part' $T^e_0 := V(T^e) \setminus B_G(\partial^\downarrow_{z}, r_y+K)$. 
Note that $T^e_1$ is empty if $r_z \geq r_y$, which is in particular the case when $y=z$.
We show separately that both $T^e_1,T^e_0$ have distance at least $K$ from $V_y \cup V^\downarrow_y$. 
Indeed, if $T^e_1$ is non-empty (and hence $z\neq y$), we have 
\begin{align*}
    d_G(T^e_1, V_y \cup V^\downarrow_y) &\geq d_G(B_G(\partial^\downarrow_{z}, r_y + K), V_y \cup V^\downarrow_y)
    \geq d_G(V_{z}, V_y) - (r_y + K) - r_y\\ 
    \intertext{since $\partial_{z}^\downarrow \subseteq V_{z}$ and $V^\downarrow_y \subseteq B_G(V_y, r_y)$. Hence, by \eqref{eq:FatK2t:Distance},}
    d_G(T^e_1, V_y \cup V^\downarrow_y)&\geq (2r_y + 3K) - r_y - K - r_y \geq K.
\end{align*}
Moreover, since $Q^e$ is a $V^\downarrow_{z}$--$V^\uparrow_{x}$ path of length $d_{G[V_x]}(V^\downarrow_{z}, V^\uparrow_x) = \ell+K+1-r_z$ in $G[V_x]$, we have $T^e_0 \subseteq B_G(V^\uparrow_{x}, \ell-r_y+1)$. 
It follows that
\begin{align*}
    d_G(T_0^e, V_y \cup V^\downarrow_y) &\geq d_G(V^\uparrow_x, V_y \cup V^\downarrow_y) - (\ell-r_y+1).\\
    &\geq (\ell+K+1-r_y) - (\ell-r_y+1) = K,
\end{align*}
where we used the first inequality of \eqref{eq:FatK2t:Distance3}. 
This concludes the proof that $d_G(U_x, U_y) \geq K$ for all $x \neq y \in V(J)$, and hence completes the proof of Lemma~\ref{lem:GPwithNiceProperties}. \qed

\begin{corollary} \label{cor:FirstLemma:PolyTimeAlgo}
    There is a polynomial-time algorithm that, given some $K \in \N$, a finite graph $G$, an $H$-partition of $G$ as in Lemma~\ref{lem:GPwithNiceProperties}, and a $2$-connected subgraph $J$ of $H$, returns a $K$-fat model of $J$ in $G$. 
\end{corollary}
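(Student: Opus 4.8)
The plan is to observe that the proof of Lemma~\ref{lem:GPwithNiceProperties} given in \Sr{subsec:FatK2tProof:Lemma1} is already an algorithm: every object it builds is produced by a single breadth-first search or by elementary set operations on $G$, $H$ and $J$, each of which runs in time polynomial in $||G||$. We take as input, besides $K$, $G$ and $J$, the rooted graph $H$ with its root $s$, the classes $(V_h)_{h\in V(H)}$, and the parameters $\ell$ and $(r_h)_{h\in V(H)}$ witnessing the hypotheses of Lemma~\ref{lem:GPwithNiceProperties}; note that $|V(H)|\le|V(G)|$ since the partition is honest. (If one does not wish to assume the input is valid, all of \ref{itm:FatK2t:NEW:Layers} to \ref{itm:FatK2t:NEW:DistanceBtwBags} can be checked in polynomial time: \ref{itm:FatK2t:NEW:Layers} is a scan of $E(H)$, \ref{itm:FatK2t:NEW:Bags} together with $R_h\ge\ell+K$ follows from computing the $R_h$, \ref{itm:FatK2t:NEW:Connected} is a connectivity test, and \ref{itm:FatK2t:NEW:DistanceBtwBags} reduces to finitely many distance computations and single-vertex separator tests in $H$.)

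First I would precompute the relevant structure. A BFS in $H$ from $s$ gives the layers $L^i$ and the index $i_h$ of each node $h$, and hence the subgraphs $G^n$. For $h\ne s$ the set $\partial^\downarrow_h$ is obtained by scanning the edges of $G$ between $V_h$ and $G^{i_h-1}$ (for $h=s$ we use the given vertex $o$ and set $\partial^\downarrow_s:=\{o\}$). One BFS in $G-G^{i_h-1}$ from $\partial^\downarrow_h$ then yields the height $R_h=\max_{v\in V_h}d_{G-G^{i_h-1}}(\partial^\downarrow_h,v)$ and the set $V^\uparrow_h=\partial^\uparrow_h(R_h-\ell-K)=B_{G-G^{i_h-1}}(\partial^\downarrow_h,R_h-\ell-K)$, and one BFS in $G$ from $\partial^\downarrow_h$ yields $V^\downarrow_h=\partial^\downarrow_h(r_h)=B_G(\partial^\downarrow_h,r_h)\setminus\partial^\uparrow_h(r_h)$. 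This amounts to $O(|V(H)|)$ breadth-first searches in $G$.

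Next, for each edge $f=gh\in E(J)$, relabelled so that $i_g=i_h+1$ (possible by \ref{itm:FatK2t:NEW:Layers}), I would scan for an edge $uv\in E(G)$ with $u\in V_h$ and $v\in V_g$ (one exists by honesty), run a BFS in $G-G^{i_h-1}$ to find a shortest $u$--$\partial^\downarrow_h$ path, and prepend $v$; this yields the path $Q^f=q^f_0\dots q^f_{R_h+1}$ of the proof, where the one point to keep in mind --- implicit in the proof of Lemma~\ref{lem:GPwithNiceProperties} --- is that this shortest path has length exactly $R_h$ (otherwise $v$ would lie in $B_{G-G^{i_h-1}}(\partial^\downarrow_h,R_h)=V_h$ by \ref{itm:FatK2t:NEW:Bags}, contradicting $v\in V_g$), so that all indices used below are valid. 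From $Q^f$ I read off, exactly as in the proof, the branch path $E_f:=q^f_{r_g}\dots q^f_{r_g+K}$ and the tentacle $T_f:=q^f_{r_g+K}\dots q^f_{\ell+K+1}$. Finally, for each $x\in V(J)$ I compute $F_x$ by a scan of the edges of $J$ at $x$ (checking the layer of the other endpoint) and output the branch set $U_x:=V^\uparrow_x\cup V^\downarrow_x\cup\bigcup_{e\in F_x}T_e$, together with the branch paths $(E_f)_{f\in E(J)}$.

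Correctness needs nothing new: the returned pair is verbatim the model constructed in the proof of Lemma~\ref{lem:GPwithNiceProperties}, which that proof shows to be a $K$-fat model of $J$ in $G$. The running time is dominated by $O(|V(H)|+|E(J)|)$ breadth-first searches, each linear in $||G||$, and is therefore polynomial. The only real care needed --- and the only place where ``follow the proof'' is not purely mechanical --- lies in the handling of $Q^f$: the shortest path must be computed inside $G-G^{i_h-1}$ (not $G$), one must invoke the length-exactly-$R_h$ observation above, and $E_f$ and $T_f$ must be extracted with the precise indexing of the proof rather than from an arbitrary geodesic of the right length, since the $K$-fatness bounds in Lemma~\ref{lem:GPwithNiceProperties} rely on exactly that indexing. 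I expect this bookkeeping to be the main (and essentially the only) obstacle.
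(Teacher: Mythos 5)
Your proposal is correct and is essentially the paper's own proof: the paper simply observes that the construction in the proof of Lemma~\ref{lem:GPwithNiceProperties} is constructive and hence yields an efficient procedure, and your write-up is that same observation spelled out in detail (including the correct justification that the shortest $u$--$\partial^\downarrow_h$ path in $G-G^{i_h-1}$ has length exactly $R_h$, which the paper leaves implicit).
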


\begin{proof}
    The above proof is constructive, and provides an efficient procedure to turn a subgraph $J$ of $H$ into a $K$-fat model of $J$ in $H$.
\end{proof}

\section{Component structure and \texorpdfstring{$K$}{K}-fat \texorpdfstring{$\Theta_t$}{Theta} minors} \label{subsec:FatK2tProof:FindingFatK2t}

The rest of the paper is devoted to the proof of Lemma~\ref{lem:ObtainingaGPwithNiceProperties}, for which we will construct a graph-partition of our graph~$G$ recursively. At the beginning of the $n$-th step of the recursion, we will already have constructed a graph-partition~$\ch^{n-1}$ of some induced subgraph~$G^{n-1}$ of~$G$. To proceed with the construction, we need that the components $C$ of $G-G^{n-1}$ satisfy two conditions. First, their boundaries $\partial_G C$ should not be too large, so that we can partition them into few sets of bounded radius. For this, we establish Lemma~\ref{lem:NearComps:NumberAndRadius} below, which finds a fat $\Theta_t$ minor otherwise.
Furthermore, we need that not too many components attach to the same bags of $\ch^{n-1}$. For this, we establish Lemma~\ref{lem:FatK2t:Components} below, which again finds a fat $\Theta_t$ minor otherwise. 

We start with a simpler lemma needed for both aforementioned lemmas.

\begin{lemma} \label{lem:FindingKFatK2tMinors}
    Let $G$ be a graph, and $K \in \N$. Let $X, Y \subseteq V(G)$ be connected and $d_G(X, Y) \geq K$.
    For every $t \in \N_{\geq 1}$, if $B_G(X, K) \cap V(Y)$ contains $t$ vertices which are pairwise at least~$3K$ apart, then~$\Theta_t$ is a $K$-fat minor of~$G$.

    Moreover, if \g is finite, then  there is a polynomial-time algorithm (for fixed~$t$) that given the above data either confirms that no such t-tuple of vertices exists, or returns a $K$-fat $\Theta_t$   minor of~$G$.
\end{lemma}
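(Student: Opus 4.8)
The plan is to directly build a $K$-fat $\Theta_t$ model from the two connected sets $X,Y$ together with the $t$ far-apart vertices they almost share. Let $v_1,\dots,v_t$ be the $t$ vertices of $B_G(X,K)\cap V(Y)$ that are pairwise at distance at least $3K$. Each $v_i$ lies within distance $K$ of $X$, so choose a geodesic $P_i$ from $v_i$ to $X$; this has length at most $K$, and I would truncate/pad it to have length exactly $K$ by noting $d_G(X,Y)\geq K$ forces length exactly $K$ once we also insist $P_i$ first hits $X$ at its final vertex (if some $P_i$ were shorter, $v_i\in B_G(X,K)$ still, and a shortest $v_i$--$X$ path together with the lower bound $d_G(X,Y)\ge K$ and $v_i\in Y$ gives length exactly $K$; if one wants uniform length one walks along inside $X$, but since branch paths of a fat model need not have equal length, length $\le K$ with the endpoint the unique vertex on $X$ suffices). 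Take the two branch sets of $\Theta_t$ to be $U_1:=X$ and $U_2:=Y$, which are connected by hypothesis, and take the $i$-th parallel edge to be $P_i$ (an $X$--$Y$ path, internally disjoint from $X$ and $Y$ once we cut it at its first vertex in $Y$ — here $v_i\in Y$, so $P_i$ meets $Y$ exactly in $v_i$, its endpoint, and meets $X$ exactly in its other endpoint).

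The bulk of the work is verifying $K$-fatness, i.e.\ that the pairwise distances among $\{U_1,U_2,P_1,\dots,P_t\}$ are all $\geq K$, with the sole exceptions of an edge and an incident branch set. For $d_G(U_1,U_2)=d_G(X,Y)\geq K$ we use the hypothesis directly. For $d_G(P_i,P_j)$ with $i\neq j$: every vertex of $P_i$ lies in $B_G(v_i,K)$ since $P_i$ has length $\leq K$ and $v_i$ is an endpoint, hence $d_G(P_i,P_j)\geq d_G(v_i,v_j)-||P_i||-||P_j||\geq 3K-K-K=K$. For $d_G(P_i,U_2)$ with $v_i\notin$... — wait, $v_i\in Y=U_2$, so $P_i$ is incident to $U_2$ and no bound is required; similarly $P_i$ is incident to $U_1=X$. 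The only remaining pairs are $d_G(P_i,U_1)$ and $d_G(P_i,U_2)$ which are all incident pairs, so there is nothing further to check. This shows $(\{U_1,U_2\},\{P_1,\dots,P_t\})$ is a $K$-fat model of $\Theta_t$.

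For the algorithmic addendum, I would observe that every object used is computable in polynomial time when $G$ is finite: compute the distance matrix by BFS from every vertex ($O(|V||E|)$); compute $B_G(X,K)\cap V(Y)$; then search for a $t$-subset of this set that is pairwise $\geq 3K$ apart. Since $t$ is fixed, brute force over all $\binom{|V|}{t}$ subsets is polynomial; alternatively this is finding an independent set of fixed size $t$ in the graph on $B_G(X,K)\cap V(Y)$ whose edges join pairs at distance $<3K$, again polynomial for fixed $t$. If no such $t$-tuple exists the algorithm reports this; otherwise it has the $v_i$, computes each geodesic $P_i$ by BFS, and outputs the model described above. I do not expect a genuine obstacle here — the one point needing a little care is the exact-length bookkeeping of the $P_i$ and confirming they meet $X$ and $Y$ only in their endpoints, which follows from taking them to be shortest $v_i$--$X$ paths and invoking $d_G(X,Y)\geq K$.
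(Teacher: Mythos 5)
Your proposal is correct and follows essentially the same route as the paper's proof: take $X$ and $Y$ as the two branch sets, take shortest $v_i$--$X$ paths (of length exactly $K$, as you observe) as the branch paths, and verify fatness via $d_G(P_i,P_j)\geq d_G(v_i,v_j)-\lVert P_i\rVert-\lVert P_j\rVert\geq K$, with the same brute-force search over $t$-tuples for the algorithmic part. The only difference is that you spell out the (easy) bookkeeping that each $P_i$ meets $X$ and $Y$ only in its endpoints, which the paper leaves implicit.
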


\begin{proof}
    Assume that $B_G(X, K) \cap Y$ contains vertices $u_1, \dots, u_t$ which are pairwise at least $3K$ apart in $G$. 
    For every $i \in [t]$, let $P_i$ be a $u_i$--$X$ path of length~$K$. 
    Then $V_1 := Y$ and $V_2 := X$ form the branch sets and the~$P_i$ form the branch paths of a $K$-fat model of~$\Theta_t$ in $G$. Indeed, we have $d_G(V_1, V_2) = d_G(X,Y) \geq K$ by assumption, and $d_G(P_i, P_j) \geq d_G(u_i, u_j) - ||P_i|| - ||P_j|| \geq 3K - K - K = K$. 
    \smallskip
    
    For the second claim, it is straightforward to efficiently check if $B_G(X, K) \cap Y$ contains such a $t$-tuple, as there are at most $n^t$ tuples to consider. If such a $t$-tuple is found, then the above proof provides an efficient procedure for finding a $K$-fat $\Theta_t$   minor.
\end{proof}

\begin{lemma} \label{lem:NearComps:NumberAndRadius}
    Let~$G$ be a graph, and let $X \subseteq V(G)$ be connected. Let further $K \in \N$, and let~$C$ be a component of $G - B_G(X, K-1)$. If~$\Theta_t$ is not a $K$-fat minor of~$G$ for some $t \geq 2$, then $\partial_G C$ has at most $t-1$ $3K$-near-components and each of them has diameter less than $6K(t-1)$.

    Moreover, if \g is finite, then there is a polynomial-time algorithm that  either confirms that $C$ has the aforementioned properties, or returns a $K$-fat $\Theta_t$ minor of~$G$.
\end{lemma}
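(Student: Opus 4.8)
The plan is to prove both assertions — a bound on the number of $3K$-near-components of $\partial_G C$ and a bound on their diameter — by contradiction, in each case extracting from a violating configuration a large collection of vertices in $\partial_G C$ that are pairwise $3K$-far apart and then applying Lemma~\ref{lem:FindingKFatK2tMinors}. To use that lemma I need to exhibit connected sets $X$ and $Y$ with $d_G(X,Y)\ge K$ such that $B_G(X,K)\cap V(Y)$ contains $t$ vertices pairwise $\ge 3K$ apart. The natural choice is $X$ itself (which is connected by hypothesis) and $Y := V(C)\cup \partial_G C$; note $Y$ is connected because $C$ is a component of $G - B_G(X,K-1)$ and every vertex of $\partial_G C$ has a neighbour in $C$. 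Moreover $d_G(X,Y)\ge K$ since $Y$ is disjoint from $B_G(X,K-1)$ except for the vertices of $\partial_G C$ themselves, and each vertex $w\in\partial_G C$ has a neighbour outside $B_G(X,K-1)$ (namely in $C$); a short argument shows $d_G(X,w)\ge K$ for all $w\in\partial_G C$, so $d_G(X,Y)\ge K$. Finally, every vertex of $\partial_G C$ lies in $B_G(X,K)$: by definition of boundary it has a neighbour in $G - C$, and since $C$ is a component of $G-B_G(X,K-1)$ that neighbour lies in $B_G(X,K-1)$, so $w\in B_G(X,K)$. Hence $\partial_G C \subseteq B_G(X,K)\cap V(Y)$, and any $t$-tuple of pairwise $3K$-far vertices of $\partial_G C$ yields a $K$-fat $\Theta_t$ minor via Lemma~\ref{lem:FindingKFatK2tMinors}, contradicting our assumption.

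For the \emph{number} of near-components: if $\partial_G C$ had $t$ distinct $3K$-near-components, pick one vertex from each; by maximality of near-components these $t$ vertices are pairwise at distance $> 3K$ (if two were $\le 3K$ apart they would lie in a common near-component). This gives the desired $t$-tuple and hence a contradiction, so there are at most $t-1$ near-components. For the \emph{diameter} bound: suppose some $3K$-near-component $Z$ of $\partial_G C$ has diameter at least $6K(t-1)$. Then there are $u,v\in Z$ with $d_G(u,v)\ge 6K(t-1)$, and since $Z$ is $3K$-near-connected there is a $3K$-near path $u = z_0, z_1,\dots,z_m = v$ inside $Z$ with consecutive vertices $\le 3K$ apart. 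Along this near path the distance from $u$ grows by at most $3K$ per step, so I can greedily select indices $0 = j_0 < j_1 < \dots < j_{t-1}$ with $d_G(u, z_{j_k})$ in successive windows of width $3K$, forcing $z_{j_0},\dots,z_{j_{t-1}}$ to be pairwise at distance $\ge 3K$ (actually $> 3K$ after choosing the windows with a little slack, so at least $3K$ by integrality): concretely, choose $j_k$ to be the first index with $d_G(u,z_{j_k})\ge 3Kk$; then for $k<k'$ we get $d_G(z_{j_k},z_{j_{k'}})\ge d_G(u,z_{j_{k'}}) - d_G(u,z_{j_k}) \ge 3Kk' - (3Kk + 3K) \ge 3K$ using that $d_G(u,z_{j_k}) < 3Kk + 3K$ by minimality of $j_k$. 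Since the total distance is at least $6K(t-1) > 3K(t-1)$, all $t$ indices $j_0,\dots,j_{t-1}$ exist, giving again $t$ vertices of $\partial_G C$ pairwise $\ge 3K$ apart — contradiction. Hence every near-component has diameter less than $6K(t-1)$.

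For the algorithmic statement, when $G$ is finite all the objects above are computable in polynomial time: distances by BFS, the component $C$ and its boundary directly, the near-components of $\partial_G C$ by building the auxiliary graph on $\partial_G C$ with edges between vertices at distance $\le 3K$ and taking its connected components, and diameters by all-pairs distances within each near-component. If either the count exceeds $t-1$ or some near-component has diameter $\ge 6K(t-1)$, the above arguments produce explicitly a $t$-tuple of pairwise $3K$-far vertices in $\partial_G C\subseteq B_G(X,K)\cap V(Y)$, and then the algorithmic part of Lemma~\ref{lem:FindingKFatK2tMinors} returns a $K$-fat $\Theta_t$ minor; otherwise we confirm the two properties. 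The main obstacle I anticipate is the bookkeeping needed to verify cleanly that $d_G(X,Y)\ge K$ and $\partial_G C\subseteq B_G(X,K)$ with the correct off-by-one treatment of $B_G(X,K-1)$ versus $B_G(X,K)$ — i.e.\ making sure the two ``$K$''s line up so that Lemma~\ref{lem:FindingKFatK2tMinors} applies verbatim — rather than any conceptual difficulty; the extraction of the far-apart tuples is routine greedy selection.
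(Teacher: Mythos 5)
Your overall strategy is exactly the paper's: both properties are reduced to producing $t$ vertices of $\partial_G C$ that are pairwise at least $3K$ apart and then invoking Lemma~\ref{lem:FindingKFatK2tMinors} with $X$ and $Y:=V(C)$ (note that with the paper's convention $\partial_G C\subseteq V(C)$, so your $Y=V(C)\cup\partial_G C$ is just $V(C)$, and your verifications that $d_G(X,Y)\ge K$ and $\partial_G C\subseteq B_G(X,K)$ are correct). The count of near-components is handled the same way as in the paper and is fine: vertices in distinct maximal $3K$-near-components are indeed more than $3K$ apart, since otherwise the union of the two near-components would be $3K$-near-connected.

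The diameter argument, however, contains a genuine arithmetic error. With $j_k$ defined as the first index satisfying $d_G(u,z_{j_k})\ge 3Kk$, minimality only gives $d_G(u,z_{j_k})<3Kk+3K$, so for consecutive selections $k'=k+1$ your chain of inequalities yields $d_G(z_{j_k},z_{j_{k'}})\ge 3K(k+1)-(3Kk+3K)=0$, not $\ge 3K$; the claimed bound only holds when $k'\ge k+2$. The parenthetical appeal to ``a little slack'' and integrality does not rescue this: two consecutively selected vertices could be at distance $1$. The fix is immediate and you already have the needed room, since you observed that the hypothesis gives $6K(t-1)$ while your existence argument only uses $3K(t-1)$: take $j_k$ to be the first index with $d_G(u,z_{j_k})\ge 6Kk$. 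Then $6Kk\le d_G(u,z_{j_k})<6Kk+3K$, so for $k<k'$ one gets $d_G(z_{j_k},z_{j_{k'}})\ge 6Kk'-(6Kk+3K)\ge 3K$, and $j_{t-1}$ exists because $d_G(u,v)\ge 6K(t-1)$. With this correction your selection directly along the near-path is actually a little cleaner than the paper's route, which first builds a $u$--$v$ walk $W$ by filling in paths of length at most $3K$, picks $t$ vertices on $W$ pairwise $6K$ apart, and then snaps each back to a vertex of the near-path at distance at most $3K/2$. The algorithmic paragraph is fine once the selection rule is repaired.
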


\begin{proof}
    If $\partial_G C$ has at least $t$ $3K$-near components, then taking one vertex from each $3K$-near component yields $t$ vertices in $\partial_G C$ which are pairwise at least $3K$ apart. Applying \Lr{lem:FindingKFatK2tMinors} (with $X:=X$ and $Y:=V(C)$) yields that~$\Theta_t$ is a $K$-fat minor of~$G$.

    Now suppose that some $3K$-near component~$C'$ of $\partial_G C$ has diameter at least $6K(t-1)$, and pick vertices $u, v \in V(C')$ with $d_G(u,v) \geq 6K(t-1)$. Since~$C'$ is a $3K$-near component, there exists a $3K$-near path $P = x_0 \dots x_n$ in~$C'$ from $u = x_0$ to $v = x_n$. Let $W$ be an $u$--$v$ walk in~$G$ obtained from~$P$ by adding for every $i \in \{0, \dots, n-1\}$ an $x_i$--$x_{i+1}$ path of length at most~$3K$ to~$P$. 
    Since $d_G(u,v) \geq 6K(t-1)$, the walk~$W$ has vertices $u = y_1, y_2, \dots, y_{t-1}, y_t = v$ which are pairwise at least~$6K$ apart in~$G$. By the definition of~$W$, there exists for every~$y_j$ some~$x_{i_j}$ in~$P$, which hence lies in~$\partial_G C$, that has distance at most~$3K/2$ from~$y_i$. It follows that $d_G(x_{i_j},x_{i_\ell}) \geq d_G(y_j, y_\ell) - d_G(y_j, x_{i_j}) - d_G(y_\ell, x_{i_\ell}) \geq 6K - 3K = 3K$. Thus, applying \Lr{lem:FindingKFatK2tMinors} (with $X := X$ and $Y := V(C)$) to the~$x_{i_j}$ for $j \in [t]$ yields that~$\Theta_t$ is a $K$-fat minor of $G$.
    \smallskip

    For the second statement, it is again straightforward to compute and count the $3K$-near-components of $\partial_G C$, and to calculate their diameters, and so we can efficiently check whether $C$ satisfies the desired properties. If not, and the number of these $3K$-near-components is at least $t$, then invoking \Lr{lem:FindingKFatK2tMinors} as above will return a $K$-fat $\Theta_t$ minor. Finally, if one of these $3K$-near-components~$C'$ has diameter at least $6K(t-1)$, then the above proof yields an efficient procedure for finding a $t$-tuple of vertices in $C'$ pairwise at distance at least $3K$, and invoking \Lr{lem:FindingKFatK2tMinors} again returns a $K$-fat $\Theta_t$ minor. 
\end{proof}

Another consequence of Lemma~\ref{lem:FindingKFatK2tMinors} is
\begin{lemma} \label{lem:FatK2t:Components}
    Let $K, t, n \in \N$ with $t \geq 3$ and $n \leq t-1$, and let $G$ be a graph with no $K$-fat $\Theta_t$ minor. Let $X_1, X_2, \dots, X_n$ be connected subsets of $V(G)$ that are pairwise at least $3K$ apart and set $V' := \bigcup_{i \in [n]} B_G(X_i, K-1)$. 
    Let $\cc$ be the set of components of $G-V'$ that each have neighbours in at least two distinct $B_G(X_i, K-1)$. 
    Then there is no set of more than  $(t-1)^3(t-2)$ vertices of $\bigcup_{C \in \cc} \partial_G C$ pairwise at distance at least $3K$.

    Moreover, if \g is finite, then there is a polynomial-time algorithm that either confirms that $\cc$ has the aforementioned property, or returns a $K$-fat $\Theta_t$ minor of~$G$.
\end{lemma}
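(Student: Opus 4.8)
The plan is to prove the contrapositive: given a set $S\subseteq\bigcup_{C\in\cc}\partial_G C$ of more than $(t-1)^3(t-2)$ vertices that are pairwise at distance at least $3K$ in $G$, I would build a $K$-fat $\Theta_t$ minor of $G$, combining \Lr{lem:FindingKFatK2tMinors} with a threefold pigeonhole — one factor $(t-1)$ to choose an index, one to choose a component, and a factor $(t-2)$ to choose a \emph{second} index distinct from the first. We may assume $\cc\neq\emptyset$ (else the claim is vacuous), which forces $n\ge 2$. First note that each $s\in S$ lies in $\partial_G C_s$ for a unique component $C_s\in\cc$, and since $C_s$ is a component of $G-V'$, the vertex $s$ has a neighbour in some $B_G(X_i,K-1)$; fix one such index $i(s)$, and record that $s\notin V'$ forces $d_G(s,X_{i(s)})=K$, so $s\in B_G(X_{i(s)},K)$. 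Pigeonholing over the $n\le t-1$ indices, there is an index $i^*$ for which the set $S'$ of those $s\in S$ with $i(s)=i^*$ satisfies $|S'|>(t-1)^2(t-2)$.

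Next I would bound how much of $S'$ can lie in a single component. For $C\in\cc$, the set $S'\cap V(C)$ is contained in $B_G(X_{i^*},K)\cap V(C)$, its elements are pairwise at distance at least $3K$, and $d_G(X_{i^*},V(C))\ge K$ since $C$ avoids $V'\supseteq B_G(X_{i^*},K-1)$; hence if $|S'\cap V(C)|\ge t$ then \Lr{lem:FindingKFatK2tMinors} with $X:=X_{i^*}$ and $Y:=V(C)$ already produces a $K$-fat $\Theta_t$ minor. So each component meets $S'$ in at most $t-1$ vertices, whence $S'$ is spread over more than $(t-1)(t-2)$ distinct components of $\cc$. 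Each of these components is adjacent to $B_G(X_{i^*},K-1)$ and, being in $\cc$, also to $B_G(X_j,K-1)$ for some $j\neq i^*$; a second pigeonhole over the at most $t-2$ admissible values of $j$ yields an index $j^*$ and at least $t$ distinct components $D^1,\dots,D^t\in\cc$, each adjacent to both $B_G(X_{i^*},K-1)$ and $B_G(X_{j^*},K-1)$ and each containing a vertex $s_k\in S'$.

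To finish, set $Z:=\bigl(\bigcup_{k=1}^t V(D^k)\bigr)\cup B_G(X_{j^*},K-1)$. This set is connected, since $B_G(X_{j^*},K-1)$ is connected and each $D^k$ has a neighbour in it, and $d_G(X_{i^*},Z)\ge K$, because the $D^k$ avoid $B_G(X_{i^*},K-1)$ while $B_G(X_{j^*},K-1)$ is at distance at least $3K-(K-1)\ge K$ from $X_{i^*}$ (using $d_G(X_{i^*},X_{j^*})\ge 3K$). The $t$ distinct vertices $s_1,\dots,s_t$ lie in $B_G(X_{i^*},K)\cap Z$ and are pairwise at distance at least $3K$, so \Lr{lem:FindingKFatK2tMinors} with $X:=X_{i^*}$ and $Y:=Z$ delivers a $K$-fat $\Theta_t$ minor of $G$, contradicting the hypothesis; this proves the stated bound.

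For the algorithmic addendum, with $t$ fixed the number $N:=(t-1)^3(t-2)$ is a constant, so one can enumerate all (polynomially many) $(N+1)$-element subsets of $\bigcup_{C\in\cc}\partial_G C$ and test each for being pairwise at distance at least $3K$; if none passes, confirm the property, and otherwise run the construction above — computing the components of $G-V'$, the two pigeonhole selections, the ball $B_G(X_{j^*},K-1)$, and finally the algorithm of \Lr{lem:FindingKFatK2tMinors} — all in polynomial time. I expect the only real difficulty to be the bookkeeping: tracking which vertices of $S$ survive into each stage and checking that the three thresholds multiply to exactly $(t-1)^3(t-2)$ — in particular that it genuinely suffices to retain $t$, and not more, components after the final pigeonhole. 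The geometric content is light, essentially just \Lr{lem:FindingKFatK2tMinors} together with the observation that every component of $G-V'$ is automatically at distance at least $K$ from each $X_i$.
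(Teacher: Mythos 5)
Your proposal is correct and follows essentially the same route as the paper: a pigeonhole on the first index, a bound of $t-1$ elements per component via \Lr{lem:FindingKFatK2tMinors}, a pigeonhole on the second index, and then a $K$-fat $\Theta_t$ model whose second branch set $Z=B_G(X_{j^*},K-1)\cup\bigcup_k V(D^k)$ is exactly the paper's $V_2$. The only (harmless) differences are that you swap the order of the second pigeonhole and the per-component bound, and you finish by invoking \Lr{lem:FindingKFatK2tMinors} once more instead of verifying fatness of the final model by hand.
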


\begin{proof}
    Suppose for a contradiction that there is a set $U \subseteq \bigcup_{C \in \cc} \partial_G C$ of size at least $(t-1)^3(t-2)+1$ such that $d_G(u, u') \geq 3K$ for all $u, u' \in U$. 
    For every $u \in U$, let $C_u \in \cc$ be the component of $G-V'$ containing~$u$.

    By the pigeonhole principle and because $n \leq t-1$, there is $i \in [n]$ and a subset $U'\subseteq U$ of size at least $(t-1)^2(t-2)+1$ such that every $u \in U'$ has a neighbour in $B_G(X_i, K-1)$. Further, by the same argument and because every $C_u \in \cc$ has neighbours in at least two distinct~$B_G(X_j, K-1)$, it follows that there is $j \neq i \in [n]$ and a set $U'' \subseteq U'$ of size at least~$(t-1)^2+1$ such that for every $u \in U''$ the component~$C_u$ has a neighbour in~$B_G(X_j, K-1)$. 
    Moreover, by Lemma~\ref{lem:FindingKFatK2tMinors} (applied to $X := X_i$ and $Y := V(C_u)$ for every $u \in U''$) and because $\Theta_t$ is not a $K$-fat minor of~$G$, we deduce that there is a subset $W \subseteq U''$ of size at least $t$ such that $C_u \neq C_{u'}$ for all $u \neq u' \in W$. 
    
    We now use~$W$ to show that~$\Theta_t$ is a $K$-fat minor of~$G$, which contradicts our assumptions and thus concludes the proof.
    For every $u \in W$ pick a $u$--$X_i$ path $Q_u$ of length~$K$, which exists since $u \in N_G(B_G(X_i, K-1))$. 
    Then by the choice of $W$, the paths $Q_u$
    form the branch paths and $V_1 :=X_i$ and $V_2 := B_G(X_j, K-1) \cup \bigcup_{u \in W} V(C_u)$
    form the branch sets of a model of~$\Theta_t$ (see Figure~\ref{fig:FatK2t:Claim2}). 
    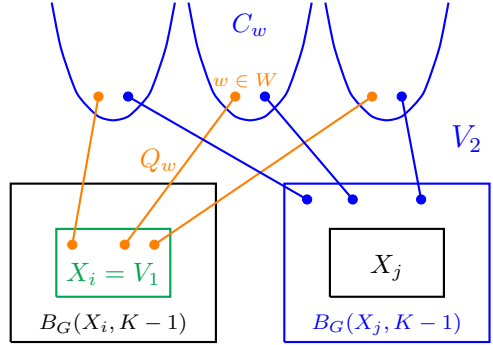
\begin{figure}[ht]
        \centering
        \begin{tikzpicture}[scale=0.3]
    
        \draw[thick] (0,0)--(9,0)--(9,7)--(0,7)--(0,0);
        \draw[Green,thick] (2,2)--(7,2)--(7,5)--(2,5)--(2,2);

        \draw[blue,thick] (12,0)--(21,0)--(21,7)--(12,7)--(12,0);
        \draw[thick] (14,2)--(19,2)--(19,5)--(14,5)--(14,2);

        \draw [blue, thick] plot [smooth, tension=0.4] coordinates {(1.8,15) (2.2,13) (2.9,11) (3.8,10) (4.5,9.8) (5.2,10) (6.1,11) (6.8,13) (7.2,15)};

        \draw [blue, thick] plot [smooth, tension=0.4] coordinates {(7.8,15) (8.2,13) (8.9,11) (9.8,10) (10.5,9.8) (11.2,10) (12.1,11) (12.8,13) (13.2,15)};

        \draw [blue, thick] plot [smooth, tension=0.4] coordinates {(13.8,15) (14.2,13) (14.9,11) (15.8,10) (16.5,9.8) (17.2,10) (18.1,11) (18.8,13) (19.2,15)};

        \draw[orange,thick] (3.85,10.85)--(2.7,4.3);
        \draw[fill,orange] (3.85,10.85) circle (.2);
        \draw[fill,orange] (2.7,4.3) circle (.2);

        \draw[orange,thick] (9.85,10.85)--(5,4.3);
        \draw[fill,orange] (9.85,10.85) circle (.2);
        \draw[fill,orange] (5,4.3) circle (.2);

        \draw[orange,thick] (15.85,10.85)--(6.3,4.3);
        \draw[fill,orange] (15.85,10.85) circle (.2);
        \draw[fill,orange] (6.3,4.3) circle (.2);

        \draw[blue,thick] (5.15,10.85)--(13,6.3);
        \draw[fill,blue] (5.15,10.85) circle (.2);
        \draw[fill,blue] (13,6.3) circle (.2);

        \draw[blue,thick] (11.15,10.85)--(15,6.3);
        \draw[fill,blue] (11.15,10.85) circle (.2);
        \draw[fill,blue] (15,6.3) circle (.2);

        \draw[blue,thick] (17.15,10.85)--(18,6.3);
        \draw[fill,blue] (17.15,10.85) circle (.2);
        \draw[fill,blue] (18,6.3) circle (.2);

        \draw[Green] (4.5,3) node {$X_i=V_1$};
        \draw (16.5,3.3) node {$X_j$};
        \draw (4.5,0.8) node {\footnotesize $B_G(X_i, K-1)$};
        \draw[blue] (16.5,0.8) node {\footnotesize $B_G(X_j, K-1)$};

        \draw[blue] (20,9) node {\large $V_2$};

        \draw[blue] (10.5,14) node {$C_w$};
        \draw[above,orange] (10.3,10.85) node {\scriptsize $w \in W$};
        \draw[orange] (6.5,8) node {$Q_w$};

    \end{tikzpicture}
        \vspace{-2em}
        \caption{An illustration of the fat $\Theta_t$ minor in the proof of Lemma~\ref{lem:FatK2t:Components}. The green and blue sets are its branch sets, and the orange paths are its branch paths.}
        \label{fig:FatK2t:Claim2}
    \end{figure}
    We claim that this model is $K$-fat. Indeed, we have 
    \begin{align*}
    d_G(Q_u, Q_{u'}) \geq d_G(u, u') - ||Q_u|| - ||Q_{u'}|| \geq 3K - K - K = K,
    \end{align*}
    since $u, u' \in U$ and hence $d_G(u, u') \geq 3K$ by the assumption on $U$.
    Moreover,
    \[
    d_G(V_1, B_G(X_j, K)) \geq d_G(X_i, X_j) - K \geq 3K - K  > K
    \]
    by the assumption on the $X_k$.
    Finally, we have $d_G(V_1, C_u) \geq K$ for all $u \in W$ since $C_u$ is a component of $G-V'$, which concludes the proof.
    \smallskip

    For the second statement, it is straightforward to compute the set $\cc$ of components of $G-V'$ that each have neighbours in at least two distinct sets $B_G(X_i, K-1)$, and to check if $\bigcup_{C \in \cc} \partial_G C$ has such a $t'$-tuple of vertices where $t':=(t-1)^3(t-2)+1$, as there are at most $n^{t'}$ tuples to consider. If such a $t'$-tuple is found, then the above proof provides an efficient procedure for finding a $K$-fat $\Theta_t$ minor (where it might find the $K$-fat $\Theta_t$ minor by invoking Lemma~\ref{lem:FindingKFatK2tMinors}, which is the only point in the proof (except for the contradiction in the end) where we used the assumption that $\Theta_t$ is not a $K$-fat minor of $G$). 
\end{proof}

\section{A merging lemma} 
\label{sec:ML}

Recall that for the proof of Lemma~\ref{lem:ObtainingaGPwithNiceProperties} we will construct a graph-partition of a graph~$G$ recursively. After each step, we will have constructed a graph-partition $\ch^n$ of some subgraph of~$G$. In the next step, we will consider, for some suitable $K'\in\N$, the $K'$-near-components of the boundaries $\partial_G C$ of the remaining components~$C$ as candidates for the new partition classes which we aim to add to~$\ch^n$. However, some of the near-components might be too close to each other for~\ref{itm:FatK2t:NEW:DistanceBtwBags}, in which case we combine them into one new partition class. The following lemma formalises this merging procedure. When we apply the lemma, the set $\cq$ will be a candidate for the partition of the boundaries $\partial_G C$, the integer $r$ will be the minimum height which we want to achieve, and $L \geq r$ will be the height $R_h$ which we need to choose. Moreover, $d$ is the distance that we want to ensure between the new partition classes plus balls around them of radius $L$.

Given a set $U$ and partitions $\cp, \cq$ of $U$, we say that $\cp$ is a \defi{coarsening} of $\cq$ if every $B \in \cq$ is a subset of some $A \in \cp$. 

\begin{lemma} \label{lem:MergingLemma}
    Let $n \in \N$, let $G$ be a graph, and let~$\cq$ be a set of at most~$n$ disjoint subsets of $V(G)$. (We think of \cq\ as a partition of $\bigcup \cq$.) 
    Then for every $d,r \in \N$, there exist some $L \in \N$ with $r \leq L \leq r + \lfloor\frac{nd}{2}\rfloor$, and a coarsening~$\cp$ of $\cq$ such that 
    \begin{enumerate}[label=\rm{(\roman*)}]
        \item \label{itm:MergingLem:1} for every $A \in \cp$ and every $u,v \in A$ there is a sequence $(B_i)_{i \in [k]} \subseteq \cq$ with $B_i \subseteq A$ for all $i \in [k]$ such that $u \in B_1$, $v \in B_k$ and $d_G(B_{i-1},B_i)\leq 2L$ for all $i \in \{2, \dots,k\}$,
        \item \label{itm:MergingLem:3} $d_G(A, A') \geq 2L + d$ for all $A \neq  A' \in \cp$, and
        \item \label{itm:MergingLem:2} if $\diam_G(B) \leq D$ for all $B \in \cq$ and some $D \in \N$, then every $A \in \cp$ has diameter at most $nD + (n-1)(2r+nd)$.
    \end{enumerate}
    Moreover, if \g is finite, then \cp\ can be computed in polynomial time. 
\end{lemma}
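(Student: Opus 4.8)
The plan is to run a pigeonhole argument over distance scales. For each integer $s \ge r$, form an auxiliary graph $\cq_s$ on vertex set $\cq$ by joining $B,B' \in \cq$ whenever $d_G(B,B') \le 2s$, and let $\cp_s$ be the partition of $\bigcup\cq$ whose parts are the unions $\bigcup \cc$ over the connected components $\cc$ of $\cq_s$. Since the $B\in\cq$ are pairwise disjoint, $\cp_s$ is a coarsening of $\cq$, and as $\cq_s \subseteq \cq_{s'}$ (as graphs) for $s \le s'$, the partition $\cp_s$ only gets coarser as $s$ grows. Starting from $L_0 := r$, I repeat the following: if $\cp_{L_j}$ already satisfies \ref{itm:MergingLem:3}, i.e.\ $d_G(A,A') \ge 2L_j + d$ for all distinct $A,A' \in \cp_{L_j}$, stop and output $L := L_j$ and $\cp := \cp_{L_j}$; otherwise pick distinct parts with $d_G(A,A') \le 2L_j + d - 1$ and set $L_{j+1} := L_j + \lfloor d/2 \rfloor$.

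The key bookkeeping step is termination with the right bound. If $d \le 1$ the process stops at $j=0$: distinct components of $\cq_r$ are by definition more than $2r$ apart, hence at distance $\ge 2r+1 \ge 2r+d$, so \ref{itm:MergingLem:3} already holds with $L=r$. If $d \ge 2$, then at a failure step some block $B \subseteq A$ and block $B' \subseteq A'$ have $d_G(B,B') \le 2L_j + d - 1 \le 2L_j + 2\lfloor d/2\rfloor = 2L_{j+1}$, so $BB'$ is an edge of $\cq_{L_{j+1}}$ and the two parts merge; thus the number of parts strictly decreases at each step, and since $|\cq| \le n$ at most $n-1$ steps occur. Hence $r \le L \le r + (n-1)\lfloor d/2\rfloor$, and since $(n-1)\lfloor d/2 \rfloor$ is an integer bounded by $(n-1)d/2 \le nd/2$ it is at most $\lfloor nd/2 \rfloor$, as required.

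Property \ref{itm:MergingLem:1} is then immediate from the construction: a part $A \in \cp = \cp_L$ is the union of a connected component of $\cq_L$, so for $u,v \in A$ one takes blocks $B_1 \ni u$ and $B_k \ni v$ in that component and a shortest path between them in $\cq_L$, which is a sequence of blocks with $B_i \subseteq A$ and $d_G(B_{i-1},B_i) \le 2L$, and with $k \le |\cq| \le n$. Property \ref{itm:MergingLem:3} holds by the stopping rule. For \ref{itm:MergingLem:2}, chaining the bounds along such a shortest block-sequence gives $d_G(u,v) \le kD + (k-1)\cdot 2L \le nD + (n-1)(2r+nd)$, using $k \le n$ and $2L \le 2r + 2\lfloor nd/2\rfloor \le 2r + nd$. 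Finally, the algorithmic claim follows since the argument is constructive: for finite $G$, precompute the $O(n^2)$ pairwise distances $d_G(B,B')$ by multi-source BFS, then simulate the at most $n$ rounds above, each round being a connected-components computation on the $n$ blocks together with a check of \ref{itm:MergingLem:3}; this runs in polynomial time.

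The only genuinely delicate point is matching the claimed bound $L \le r + \lfloor nd/2\rfloor$ exactly, which hinges on two observations: first, that the part count strictly decreases so only $n-1$ scale jumps of size $\lfloor d/2\rfloor$ are ever needed, rather than some cruder estimate; and second, that the degenerate cases $d \in \{0,1\}$, where $\lfloor d/2\rfloor = 0$ and the naive update would loop forever, must be dispatched separately by noting that \ref{itm:MergingLem:3} is already satisfied at scale $r$. Everything else is routine distance bookkeeping.
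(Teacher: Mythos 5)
Your proof is correct and follows essentially the same strategy as the paper's: an iterative merging process that raises the scale by $\lfloor d/2\rfloor$ each time a pair of parts violates \ref{itm:MergingLem:3}, terminating after at most $n-1$ merges, with the same chaining argument for \ref{itm:MergingLem:1} and \ref{itm:MergingLem:2}. Your formalization via components of a proximity graph at each scale (and your explicit dispatch of the degenerate cases $d\le 1$, which your variant genuinely needs to avoid a non-terminating loop) is a minor repackaging of the paper's direct pairwise merging, not a different argument.
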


\begin{proof}
    We first construct a coarsening $\cp$ satisfying \ref{itm:MergingLem:1} and \ref{itm:MergingLem:3}, and then verify that $\cp$ also satisfies \ref{itm:MergingLem:2}. We construct $\cp$ recursively as follows. Set $\cp_0 := \cq$ and $L_0 := r$, and assume that we have already defined $\cp_m$ for some $m < n$ such that~$\cp_m$ has $n-m$ elements and satisfies \ref{itm:MergingLem:1} with $L_m := r + \lfloor\frac{md}{2}\rfloor$ instead of~$L$.
    If~$\cp_m$ also satisfies \ref{itm:MergingLem:3} with~$L_m$ instead of~$L$, then $\cp:= \cp_m$ and $L := L_m$ are as desired. 
    In particular, if $m = n-1$, then $|\cp_m| = 1$, and hence~$\cp_m$ satisfies \ref{itm:MergingLem:3} trivially. 
    
    Otherwise, pick two sets $A, A' \in \cp_m$ with $d_G(A', A') < 2L_m + d$. 
    Then $\cp_{m+1} := (\cp_m \setminus \{A, A'\}) \cup \{A \cup A'\}$ has $n-m-1$ elements, and it still satisfies \ref{itm:MergingLem:1} with $L_{m+1} := r+\lfloor \frac{(m+1)d}{2}\rfloor \geq L_m + \lfloor \frac{d}{2}\rfloor$ instead of $L$. Indeed, let $a \in A$ and $a' \in A'$ such that $d_G(a, a') < 2L_m + d$. Then for every $u \in A$ and $v \in A'$ we can concatenate the sequences given by \ref{itm:MergingLem:1} for $u, a \in A$ and $a', v \in A'$, which yields a sequence for $u,v \in A \cup A'$ as in \ref{itm:MergingLem:1}.
    This completes the construction of~$\cp$ and the verification that~$\cp$ satisfies \ref{itm:MergingLem:1} and \ref{itm:MergingLem:3}.
    \smallskip

    To check \ref{itm:MergingLem:2}, let $A \in \cp$, and assume that $\diam_G(B) \leq D$ for some $D \in \N$ and all $B \in \cq$. Then 
    \begin{align*}
    \diam_G\left(A\right) \leq nD +(n-1)2L.
    \end{align*}
    by picking $u,v \in A$, and a sequence of $B_i's$ as in \ref{itm:MergingLem:1}, and noting that we have at most $n$ such $B_i's$. The right hand side is at most $nD+ (n-1)(2r+ nd)$ by our bound on $L$.
\smallskip

Since this recursive construction terminates after at most $n$ steps, each of which only compares distances between pairs of at most $n$ sets of vertices, it can be carried out by a polynomial-time algorithm. 
\end{proof}

\section{Proof of Lemma~\ref{lem:ObtainingaGPwithNiceProperties}}
\label{subsec:FatK2tProof:Lemma2}

We can now prove Lemma~\ref{lem:ObtainingaGPwithNiceProperties}. We will provide concrete values for $R(t,K)$ and $\ell$ that satisfy our requirements, but the reader can choose to ignore these values; what matters is that we can choose $R(t,K), \ell$ large in comparison to $t$ and~$K$, more concretely, large enough compared to values that come out of applications of Lemmas~\ref{lem:NearComps:NumberAndRadius}, ~\ref{lem:FatK2t:Components} and~\ref{lem:MergingLemma}. The values that we obtain are\footnote{We remark that we rounded the function $R_0(t, K)$ up to make it more readable. It is much larger than $N(t)$ and $L(t,K)$ but independent of $L'(t,K)$.} 
\begin{align*}
    N(t) &:= \bigg\lceil\frac{1}{2}(t-1)^3\cdot(t-2)\bigg\rceil,\\
    L(t,K) &:= \lceil3K/2\rceil + N(t) \cdot 3K,\\
    L'(t,K) &:= N(t) \cdot \big(4\cdot L(t,K) + 5K\big) 
    + 2\cdot L(t,K)+3K,\\
    R_0(t,K) &:=  15t^{12}K+18t^9K, 
    \text{ and}\\
    R(t,K) &:= R_0(t,K) + 2L'(t,K) \in O(t^{12}K). 
\end{align*}
We prove Lemma~\ref{lem:ObtainingaGPwithNiceProperties} with the function $R(t,K)$ and $\ell := L(t,K)$. 
\medskip

Let $t, K \in \N_{\geq 1}$ with $t \geq 3$, and let $G$ be a graph with no $K$-fat $\Theta_t$ minor. 
By considering each component of $G$ individually, we may assume that $G$ is connected.
\medskip

We first describe a method to inductively define a bounded and honest graph-partition that satisfies \ref{itm:FatK2t:NEW:Layers} and~\ref{itm:FatK2t:NEW:Bags}. We then fix specific constants so that also \ref{itm:FatK2t:NEW:Connected} and \ref{itm:FatK2t:NEW:DistanceBtwBags} hold.

We construct the desired graph-partition $\ch = (H, (V_h)_{h \in V(H)})$ of $G$ recursively `layer by layer', i.e.\ the nodes that we add to $H$ in the $n$-th step of the construction will form the $n$-th layer $L^n := L^n_{H,s}$ of $H$ with respect to the root~$s$ of $H$, which we specify in the first construction step. 

Pick $o \in V(G)$ arbitrarily.
We initialize $H^0 := (\{s\}, \emptyset)$ on a single vertex~$s$, its root, and set $V_{s} := B_G(o, L'(t,K))$. Then $H^0 = (H^0, (V_s))$ is an honest graph-partition of $G^0 = G[V_s]$. Moreover, $L^0 = \{s\}$. 

Having defined graph-partitions $\ch^i$ of $G^i$ \fe\ $i\leq n$, we proceed to construct $\ch^{n+1}$.
The main effort will go into finding a suitable partition $\cp$ of $N_G(G^n)$ into sets of diameter at most $R_0(t,K)$ (whose construction we postpone for later).
The new vertices of $H^{n+1}-H^n$ will be in bijection with the elements of $\cp$. For each $A \in \cp$, we introduce a vertex $h_A$, fix a `height' $R_A = R_{h_A} \leq L'(t, K)$.
We choose $\cp$ and the heights $R_A$ so that the $V_{h_A}$ are pairwise disjoint, and there is no edge of~$G$ between $V_{h_A}$ and $V_{h_B}$ for $A \neq B \in \cp$ (in fact, the $V_{h_A}$ will be pairwise far apart; see \ref{itm:FatK2t:DistanceBtwBags:Copy} below). 
We add an edge between nodes $h, h'\in V(H^{n+1})$ whenever there is an edge in $G$ between $V_h$ and $V_{h'}$. By the last property, $L^{n+1} = V(H^{n+1}-H^n)$ is independent. Moreover, $L^n = V(H^n-H^{n-1})$ separates $L^{n+1}$ from all $L^i$ with $i \leq n-1$ since the partition classes of nodes $h \in L^n$ contain the neighbourhood of $G^{n-1}$. Hence, $V(H^{n+1}-H^n)$ is indeed the $(n+1)$st layer $L^{n+1}$ of $H^{n+1}$.
By definition, $\ch^{n+1} = (H^{n+1}, (V_h)_{h \in V(H^{n+1})})$ is an honest graph-partition of $G^{n+1} = G[\bigcup_{h \in V(H^{n+1})} V_h]$.

If $N_G(G^n)$ is empty at some step $n$, which happens precisely when $G$ has finite diameter, then the process terminates. This is the only difference between the finite and infinite diameter case throughout our proof. 

We let $H := \bigcup_{n \in \N} H^n$. Then $\ch := (H, (V_h)_{h \in V(H)})$ is an honest graph-partition of $\bigcup_{n \in \N} G^n$, which is equal to $G$ since $G$ is connected and each $G^{n+1}$ contains the neighbourhood of $G^n$. By the comment above, $\ch$ satisfies \ref{itm:FatK2t:NEW:Layers}. 
Furthermore, $\ch$ satisfies \ref{itm:FatK2t:NEW:Bags} by the definition of $V_{h_A}$ and because $\partial^\downarrow_{h_A} = A$.
Moreover, as every $A \in \cp$ has diameter at most $R_0(t,K)$ and $R_A \leq L'(t,K)$, every partition class $V_{h_A}$ of $\ch$ has diameter at most $R_0(t,K) + 2L'(t,K) = R(t, K)$, and hence $\ch$ is $R(t,K)$-bounded.
\medskip

Thus, it only remains to specify $\cp$ and the heights $R_A$, which we will choose so that $R_A \geq 2\ell+3K$, and check that \ref{itm:FatK2t:NEW:Connected} and \ref{itm:FatK2t:NEW:DistanceBtwBags} hold. 

We repeat these properties here: 
for all $h \in V(H)$
\begin{enumerate}[label=\rm{(\arabic*)}]
    \item \label{itm:FatK2t:Connected:Copy} $\partial^\uparrow_h(R_h - \ell - K) \cup \partial^\downarrow_h(r_h)$ is connected.
\end{enumerate}
(We will specify the `depths' $r_h \leq \ell$ later on.) 
Recall that $\partial^\downarrow_h$ is the set of all vertices of $V_h$ that have a neighbour in $G^{i_h-1}$; in particular, $\partial^\downarrow_{h_A} = A$ for every node $h_A \in L^{n+1}$ by definition. We need the following modified version of \ref{itm:FatK2t:NEW:DistanceBtwBags}:
\begin{enumerate}[label=\rm{(\arabic*)}]
    \setcounter{enumi}{1}
    \item \label{itm:FatK2t:DistanceBtwBags:Copy} for all non-adjacent $g \neq h \in V(H)$ either $d_G(V_g, V_h) \geq 2\cdot\max\{r_g,r_h\}+3K$ or there is a node $x \in V(H)$ such that $V_x$ separates $V_g,V_h$ in $G$.
\end{enumerate}
(Note that \ref{itm:FatK2t:DistanceBtwBags:Copy} immediately implies \ref{itm:FatK2t:NEW:DistanceBtwBags} since $\ch$ is honest.) 

For our construction we need to inductively ensure that \ref{itm:FatK2t:Connected:Copy} and \ref{itm:FatK2t:DistanceBtwBags:Copy} hold for all $g,h \in V(H^n)$.
We remark that while for \ref{itm:FatK2t:Connected:Copy} it is enough to ensure that every $\ch^n$ is a graph-partition satisfying \ref{itm:FatK2t:Connected:Copy} \emph{with respect to $G^n$}, we need that $\ch^n$ satisfies \ref{itm:FatK2t:DistanceBtwBags:Copy} \emph{within~$G$}, i.e.\ if two partition classes $V_g, V_h$ of nodes $g\neq h \in V(H^n)$ are too close \emph{in $G$}, then the partition class $V_x$ of some node $x \in V(H^n)$ separates $V_g, V_h$ \emph{in~$G$}.

Moreover, we need to inductively ensure that the following property is true:
\begin{enumerate}[label=\rm{(\arabic*)}]
    \setcounter{enumi}{2}
    \item \label{itm:FatK2t:Star} Every component $C$ of $G-G^{n-1}$ meets at most $t-1$ partition classes $V_{h}$ of $\ch^n$.
\end{enumerate}
Letting $R_{s} := L'(t,K)$, $r_s:=0$, and $G^{-1}:= \emptyset$ clearly satisfies \ref{itm:FatK2t:Connected:Copy} to \ref{itm:FatK2t:Star} for $n=0$.

For every component $Z$ of $G-G^{n-1}$ let $\cd_Z$ be the set of all components of $G-G^n$ that are contained in $Z$ and that have neighbours in at least two distinct partition classes of $\ch^n$. Recall that $\cc(G-G^n)$ is the set of components of $G-G^n$.
Let $\mathfrak{R}$ be the partition of $\cc(G-G^n)$ comprising the $\cd_Z$ as above and a singleton $\{C\}$ for each component~$C$ of $G-G^n$ that is not in any $\cd_Z$ (i.e.\ that has neighbours in exactly one partition class of $\ch^n$) (see Figure~\ref{fig:PartitionR}). 

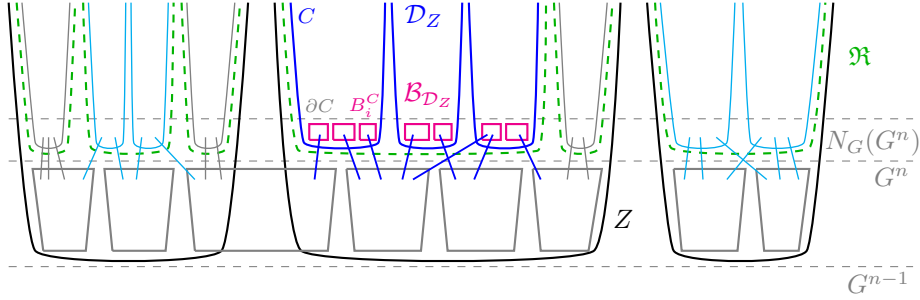
\begin{figure}[ht]
    \centering
    \begin{tikzpicture}[scale=0.35,auto=left]

\definecolor{dgreen}{rgb}{0,0.7,0}

\draw[gray,dashed] (0,0) -- (34,0);
\draw[gray,dashed] (0,4) -- (34,4);
\draw[gray,dashed] (0,5.6) -- (34,5.6);

\draw [thick] plot [smooth, tension=0.2] coordinates {(0,10) (1,0.6) (8,0.6) (9,10)};
\draw [thick] plot [smooth, tension=0.2] coordinates {(10,10) (11,0.6) (22,0.6) (23,10)};
\draw [thick] plot [smooth, tension=0.2] coordinates {(24,10) (25,0.6) (30,0.6) (31,10)};

\draw [thick,dgreen,dashed] plot [smooth, tension=0.2] coordinates {(0.3,10) (0.8,4.5) (2.2,4.5) (2.5,10)};
\draw [thick,dgreen,dashed] plot [smooth, tension=0.2] coordinates {(2.8,10) (3.1,4.5) (6,4.5) (6.2,10)};
\draw [thick,dgreen,dashed] plot [smooth, tension=0.2] coordinates {(6.5,10) (6.8,4.5) (8.2,4.5) (8.7,10)};

\draw [gray,line width=0.5] plot [smooth, tension=0.2] coordinates {(0.5,10) (1,4.7) (2,4.7) (2.3,10)};
\draw [cyan,line width=0.5] plot [smooth, tension=0.2] coordinates {(3,10) (3.3,4.7) (4.3,4.7) (4.4,10)};
\draw [cyan,line width=0.5] plot [smooth, tension=0.2] coordinates {(4.6,10) (4.7,4.7) (5.8,4.7) (6,10)};
\draw [gray,line width=0.5] plot [smooth, tension=0.2] coordinates {(6.7,10) (7,4.7) (8,4.7) (8.5,10)};

\draw [thick,dgreen,dashed] plot [smooth, tension=0.12] coordinates {(10.3,10) (10.9,4.4) (19.9,4.4) (20.2,10)};
\draw [thick,dashed,dgreen] plot [smooth, tension=0.2] coordinates {(20.5,10) (20.8,4.5) (22.2,4.5) (22.7,10)};

\draw [blue,thick] plot [smooth, tension=0.2] coordinates {(10.6,10) (11.1,4.7) (13.9,4.7) (14.15,10)};
\draw [blue,thick] plot [smooth, tension=0.2] coordinates {(14.4,10) (14.7,4.7) (16.8,4.7) (17.05,10)};
\draw [blue,thick] plot [smooth, tension=0.2] coordinates {(17.3,10) (17.6,4.7) (19.7,4.7) (19.9,10)};
\draw [gray,line width=0.5] plot [smooth, tension=0.2] coordinates {(20.7,10) (21,4.7) (22,4.7) (22.5,10)};

\draw[thick,dgreen,dashed] plot [smooth, tension=0.2] coordinates {(24.3,10) (24.9,4.5) (30.1,4.5) (30.7,10)};
\draw[cyan,line width=0.5] plot [smooth, tension=0.2] coordinates {(24.5,10) (25.1,4.7) (27,4.7) (27.3,10)};
\draw[cyan,line width=0.5] plot [smooth, tension=0.2] coordinates {(27.6,10) (27.9,4.7) (29.9,4.7) (30.5,10)};

\draw[gray,thick] (1.3,0.6) -- (2.9,0.6) -- (3.2,3.7) -- (0.9,3.7) -- (1.3,0.6);
\draw[gray,thick] (3.9,0.6) -- (5.9,0.6) -- (6.2,3.7) -- (3.6,3.7) -- (3.9,0.6);
\draw[gray,thick] (7,0.6) -- (12,0.6) -- (12.3,3.7) -- (6.7,3.7) -- (7,0.6);
\draw[gray,thick] (12.7,3.7) -- (13,0.6) -- (15.5,0.6) -- (15.8,3.7) -- (12.7,3.7);
\draw[gray,thick] (16.2,3.7) -- (16.5,0.6) -- (19,0.6) -- (19.3,3.7) -- (16.2,3.7);
\draw[gray,thick] (19.7,3.7) -- (20,0.6) -- (21.8,0.6) -- (22.3,3.7) -- (19.7,3.7);
\draw[gray,thick] (25,3.7) -- (25.3,0.6) -- (27.4,0.6) -- (27.7,3.7) -- (25,3.7);
\draw[gray,thick] (28.1,3.7) -- (28.4,0.6) -- (29.7,0.6) -- (30.1,3.7) -- (28.1,3.7);

\draw[magenta,thick] (11.3,4.8) -- (12,4.8) -- (12,5.4) -- (11.3,5.4) -- (11.3,4.8);
\draw[magenta,thick] (12.2,4.8) -- (13,4.8) -- (13,5.4) -- (12.2,5.4) -- (12.2,4.8);
\draw[magenta,thick] (13.2,4.8) -- (13.8,4.8) -- (13.8,5.4) -- (13.2,5.4) -- (13.2,4.8);
\draw[magenta,thick] (14.9,4.8) -- (15.8,4.8) -- (15.8,5.4) -- (14.9,5.4) -- (14.9,4.8);
\draw[magenta,thick] (16,4.8) -- (16.65,4.8) -- (16.65,5.4) -- (16,5.4) -- (16,4.8);
\draw[magenta,thick] (17.8,4.8) -- (18.5,4.8) -- (18.5,5.4) -- (17.8,5.4) -- (17.8,4.8);
\draw[magenta,thick] (18.7,4.8) -- (19.5,4.8) -- (19.5,5.4) -- (18.7,5.4) -- (18.7,4.8);

\draw[gray,line width=0.5] (1.3,4.9) -- (1.2,3.3);
\draw[gray,line width=0.5] (1.5,4.9) -- (1.5,3.3);
\draw[gray,line width=0.5] (1.7,4.9) -- (2.1,3.3);

\draw[cyan,line width=0.5] (3.5,4.9) -- (2.8,3.3);
\draw[cyan,line width=0.5] (4,4.9) -- (4.3,3.3);

\draw[cyan,line width=0.5] (5,4.9) -- (4.8,3.3);
\draw[cyan,line width=0.5] (5.5,4.9) -- (7,3.3);

\draw[gray,line width=0.5] (7.4,4.9) -- (7.4,3.3);
\draw[gray,line width=0.5] (7.8,4.9) -- (8,3.3);

\draw[blue,line width=0.7] (11.7,5) -- (11.5,3.3);
\draw[blue,line width=0.7] (12.6,5) -- (13.2,3.3);
\draw[blue,line width=0.7] (13.5,5) -- (14,3.3);

\draw[blue,line width=0.7] (15.2,5) -- (14.8,3.3);
\draw[blue,line width=0.7] (16.2,5) -- (16.8,3.3);

\draw[blue,line width=0.7] (18,5) -- (15.2,3.3);
\draw[blue,line width=0.7] (18.2,5) -- (17.5,3.3);
\draw[blue,line width=0.7] (19.2,5) -- (20,3.3);

\draw[gray,line width=0.5] (21.2,4.9) -- (21,3.3);
\draw[gray,line width=0.5] (21.6,4.9) -- (21.8,3.3);

\draw[cyan,line width=0.5] (25.6,4.9) -- (25.4,3.3);
\draw[cyan,line width=0.5] (26,4.9) -- (26.1,3.3);
\draw[cyan,line width=0.5] (26.6,4.9) -- (28.5,3.3);

\draw[cyan,line width=0.5] (28.3,4.9) -- (27,3.3);
\draw[cyan,line width=0.5] (28.8,4.9) -- (29,3.3);
\draw[cyan,line width=0.5] (29.3,4.9) -- (29.7,3.3);

\draw[gray] (32.7,-0.6) node [] {$G^{n-1}$};
\draw[gray] (33.2,3.4) node [] {$G^{n}$};
\draw[gray] (32.5,4.7) node [] {$N_G(G^n)$};
\draw[dgreen] (32,8) node [] {$\mathfrak{R}$};
\draw (23.1,1.8) node [] {$Z$};
\draw[blue] (15.6,9.5) node [] {$\cd_Z$};
\draw[magenta] (15.8,6.5) node [] {$\cb_{\cd_Z}$};
\draw[blue] (11.2,9.5) node [] {\footnotesize $C$};
\draw[gray] (11.7,6.1) node [] {\scriptsize $\partial C$};
\draw[magenta] (13.4,6.1) node [] {\scriptsize $B_i^C$};

\end{tikzpicture}
    \vspace{-3em}
    \caption{A visualisation of the partition $\mathfrak{R}$ of $\cc(G-G^n)$ (in green, with dashed lines). Every partition class in $\mathfrak{R}$ is either a singleton comprising a component that has only neighbours in exactly one partition class of $\ch^n$ (indicated in grey), or it is $\cd_Z$ for some component $Z$ of $G-G^{n-1}$ (indicated in light/dark blue).}
    \label{fig:PartitionR}
\end{figure}

Note that $\mathfrak{R}$ naturally induces a partition $\cgr$ of $N_G(G^n)$, by letting  $\cgr:= \big\{\partial_G (\bigcup \cd) \mid \cd\in \mathfrak{R}\big\}$. 
We will obtain $\cp$ by refining $\cgr$.

\medskip
For every $C \in \cc(G-G^n)$, let $B^C_1, \dots, B^C_{m_C}$ be the $3K$-near components of~$\partial_G C$. We group these $3K$-near components together over $\cgr$ by considering 
\[
\cb_\cd := \{B^C_i : C \in \cd,\, i \leq m_C\} \text{ for every } \cd \in \mathfrak{R}.
\]
Set $\cb := \bigcup_{\cd \in \mathfrak{R}} \cb_\cd$, and note that $\bigcup \cb = N_G(G^n)$. 
Our final partition $\cp$ of $N_G(G^n)$ will be a refinement of $\cgr$ and a coarsening of $\cb$.

We may think of $\cb$ as candidate for the partition $\cp$ of $N_G(G^n)$, and the $B_i^C$ as candidates for the new partition classes $V_h$ that we want to add to $\ch^{n}$. By taking $r_h := 0$ for all such new $V_h$ (and $R_h = 0$), they would already satisfy \ref{itm:FatK2t:DistanceBtwBags:Copy} at least in the case where $B_i^C$ and $B_j^{C'}$ are from the same component $C=C' \in \cd$. However, we need that they also satisfy \ref{itm:FatK2t:DistanceBtwBags:Copy} for $C \neq C'$. Moreover, since the $B_i^C$ are only $3K$-near components, they need not be connected, and hence might also not satisfy \ref{itm:FatK2t:Connected:Copy}. To make them connected, we might have to increase the heights $R_h$ and `depths' $r_h$ to $3K/2$. By doing so, the~$B_i^C$ might no longer satisfy \ref{itm:FatK2t:DistanceBtwBags:Copy} even within the same component $C$. To solve these two problems, we have to merge $B_i^C$'s that are to close. For this, we will employ Lemma~\ref{lem:MergingLemma}, which ensures that the merged sets are far apart and have bounded diameters (see Figure~\ref{fig:MergingLemma:1}). 
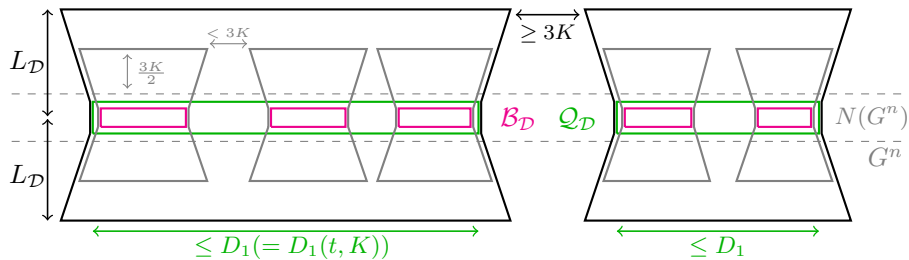
\begin{figure}[ht]
    \centering
    \begin{tikzpicture}[scale=0.35,auto=left]

\definecolor{dgreen}{rgb}{0,0.7,0}

\draw[gray,dashed] (-2.55,0) -- (31,0);
\draw[gray,dashed] (-2.55,1.8) -- (31,1.8);

\draw[dgreen,thick] (0.5,0.3) -- (15,0.3) -- (15,1.5) -- (0.5,1.5) -- (0.5,0.3);
\draw[magenta,thick] (0.8,0.55) -- (4,0.55) -- (4,1.25) -- (0.8,1.25) -- (0.8,0.55);
\draw[magenta,thick] (7.2,0.55) -- (10,0.55) -- (10,1.25) -- (7.2,1.25) -- (7.2,0.55);
\draw[magenta,thick] (12,0.55) -- (14.7,0.55) -- (14.7,1.25) -- (12,1.25) -- (12,0.55);

\draw[dgreen,thick] (20.2,0.3) -- (27.8,0.3) -- (27.8,1.5) -- (20.2,1.5) -- (20.2,0.3);
\draw[magenta,thick] (20.5,0.55) -- (23,0.55) -- (23,1.25) -- (20.5,1.25) -- (20.5,0.55);
\draw[magenta,thick] (25.5,0.55) -- (27.5,0.55) -- (27.5,1.25) -- (25.5,1.25) -- (25.5,0.55);

\draw[gray,thick] (0.7,0.55) -- (0.7,1.25) -- (0,3.5) -- (4.8,3.5) -- (4.1,1.25) -- (4.1,0.55) -- (4.8,-1.5) -- (0,-1.5) -- (0.7,0.55);
\draw[gray,thick] (7.1,0.55) -- (7.1,1.25) -- (6.4,3.5) -- (10.8,3.5) -- (10.1,1.25) -- (10.1,0.55) -- (10.8,-1.5) -- (6.4,-1.5) -- (7.1,0.55);
\draw[gray,thick] (11.9,0.55) -- (11.9,1.25) -- (11.2,3.5) -- (15.5,3.5) -- (14.8,1.25) -- (14.8,0.55) -- (15.5,-1.5) -- (11.2,-1.5) -- (11.9,0.55);

\draw[gray,thick] (20.4,0.55) -- (20.4,1.25) -- (19.7,3.5) -- (23.7,3.5) -- (23.1,1.25) -- (23.1,0.55) -- (23.7,-1.5) -- (19.7,-1.5) -- (20.4,0.55);
\draw[gray,thick] (25.4,0.55) -- (25.4,1.25) -- (24.7,3.5) -- (28.3,3.5) -- (27.6,1.25) -- (27.6,0.55) -- (28.3,-1.5) -- (24.7,-1.5) -- (25.4,0.55);

\draw[thick] (0.4,0.3) -- (0.4,1.5) -- (-0.7,5) -- (16.2,5) -- (15.1,1.5) -- (15.1,0.3) -- (16.2,-3) -- (-0.7,-3) -- (0.4,0.3);

\draw[thick] (20.1,0.3) -- (20.1,1.5) -- (19,5) -- (29,5) -- (27.9,1.5) -- (27.9,0.3) -- (29,-3) -- (19,-3) -- (20.1,0.3);

\draw[gray,<->] (1.8,2) -- (1.8,3.3);
\draw[line width=0.6pt,<->] (-1.2,0.95) -- (-1.2,5);
\draw[line width=0.6pt,<->] (-1.2,0.85) -- (-1.2,-3);
\draw[line width=0.6pt,<->] (16.4,4.8) -- (18.8,4.8);
\draw[gray,<->] (4.9,3.7) -- (6.3,3.7);
\draw[line width=0.6pt,dgreen,<->] (0.5,-3.4) -- (15,-3.4);
\draw[line width=0.6pt,dgreen,<->] (20.2,-3.4) -- (27.8,-3.4);

\draw[gray] (30.3,-0.6) node [] {$G^{n}$};
\draw[gray] (29.8,0.9) node [] {\small $N(G^n)$};

\draw[gray] (2.7,2.6) node [] {\scriptsize $\frac{3K}{2}$};
\draw (17.6,4.1) node [] {\footnotesize $\geq 3K$};

\draw[gray] (5.6,4.1) node [] {\tiny $< 3K$};

\draw (-1.95,3) node [] {$L_\cd$};
\draw (-1.95,-1.3) node [] {$L_\cd$};

\draw[dgreen] (18.7,0.85) node [] {$\cq_\cd$};
\draw[magenta] (16.5,0.85) node [] {$\cb_\cd$};

\draw[dgreen] (8,-4) node [] {\small $\leq D_1 (= D_1(t,K))$};
\draw[dgreen] (24,-4) node [] {\small $\leq D_1$};

\end{tikzpicture}
    \vspace{-3em}
    \caption{Indicated in pink is the partition $\cb_\cd$.
    The grey boxes around the $B_i^C$ are connected, but they need not to be pairwise $3K$ far apart. Applying Lemma~\ref{lem:MergingLemma} yields a coarsening $\cq_\cd$ of $\cb_\cd$ such that the black boxes (of height $L_\cd$) around the (green) partition classes of $\cq_\cd$ are connected and pairwise at least $3K$ far apart. Moreover, the partition classes in $\cq_\cd$ have bounded diameter.}
    \label{fig:MergingLemma:1}
\end{figure}
In order to apply Lemma~\ref{lem:MergingLemma}, we need to ensure that each $\cb_\cd$ contains only boundedly many elements all of bounded diameter. More precisely, we claim that for all $\cd \in \mathfrak{R}$
\labtequtag{cb}{there is no set of more than $(t-1)^3(t-2)$ elements of $\cb_\cd$ that are pairwise at least $3K$ far apart,
}{$\ast$}
(we will later merge elements of $\cb_\cd$ so that this bound will play the role of $n$ in our application of Lemma~\ref{lem:MergingLemma}), and
\labtequtag{Diametercb}{$\diam_G(B) \leq 6K(t-1)$ for every $B \in \cb_\cd$.
}{$\ast\ast$}
For this, let $C \in \cc(G-G^n)$. 
Then applying Lemma~\ref{lem:NearComps:NumberAndRadius} to $C$ (with some $X$ that we specify in the next sentence) yields that every $3K$-near component $B_i^C$ of $\partial_G C$ has diameter at most $6K(t-1)$ in $G$ and that $m_C \leq t-1$; in particular, \eqref{Diametercb} holds.
For this, let $X$ be the component of $Y:=G^n-B_G(G-G^n, K-1)$ which contains $o$ (which exists, since $V(G^0) = B_G(o, L'(t,K))$ and $L'(t,K)>K-1$). 
For the application of Lemma~\ref{lem:NearComps:NumberAndRadius}, we need to check that $C$ is a component of $B_G(X, K-1)$, which we do next. Since $C$ is connected and avoids~$X$ (as $X\subseteq G^n$ and $C\subseteq G-G^n$), it suffices to show that $N_G(C) \subseteq B_G(X, K-1)$. Pick $v \in N_G(C)$,
and note that $v\in V(G_n)$, so there is some $h\in L_i$ with $v\in V_h$. By \ref{itm:FatK2t:NEW:Bags} of $G^n$ (which holds inductively), we obtain $i=n$ and $\dist_{G_n}(v,\partial_h^{\downarrow})=R_h\geq K$. Let $P$ be a shortest $v$--$\partial_h^{\downarrow}$ path. By \ref{itm:FatK2t:NEW:Bags}, exactly the first $K-1$ vertices of $P$ are not in $Y$, and the $K$-th vertex $w$ (which exists) is contained in $Y$ and has distance exactly $K-1$ from $v$.
We need to show that $w \in X$.
For this, note first that the remainder of $P$ provides a $w$--$\partial_h^{\downarrow}$ path in $Y$.
Since all vertices in $\partial^\downarrow_h$ send an edge to $G^{i-1}$ by the definition of $\partial^\downarrow_h$, there is a path in $Y$ from $w$ to $G^{i-1}$ (note that $V(G^{i-1}) \subseteq Y$). Inductively applying this argument (except that now the entire path $P$ is contained in $Y$) thus yields that there is a $w$--$o$ path in $Y$, and thus $w \in X$ as claimed.

To complete the proof of \eqref{cb}, note that if $\cd = \{C\}$ for some $C \in \cc(G-G^n)$, then \eqref{cb} follows immediately from $m_C \leq t-1$. 
Otherwise, $\cd = \cd_Z$ for some component $Z \in \cc(G-G^{n-1})$. We then obtain \eqref{cb} by applying Lemma~\ref{lem:FatK2t:Components} with the sets $X_i$ being the sets $\partial^\uparrow_h(R_h-K-1) \cup \partial^\downarrow_h(r_h)$ for nodes $h \in L^n$ whose partition class $V_h$ has a neighbour in some $C \in \cd$ (which implies that $\cd=\cd_Z$ is a subset of the set $\cc$ from Lemma~\ref{lem:FatK2t:Components}). For this, note that there are at most $t-1$ such $V_h$ by \ref{itm:FatK2t:Star} and because the components in $\cd = \cd_Z$ are all contained in $Z$, and hence every such $V_h$ meets $Z$ (at least in a vertex of $N_G(C)$). 
Moreover, note that the $X_i$ are connected by \ref{itm:FatK2t:Connected:Copy} and are pairwise at least $3K$ apart by~\ref{itm:FatK2t:DistanceBtwBags:Copy} (as $X_i, X_j \subseteq V(Z)$ implies that no partition class separates them). 
\medskip
 
Having established the conditions \eqref{cb} and \eqref{Diametercb}, we are almost in a position to apply Lemma~\ref{lem:MergingLemma}, except that the size of the $\cb_\cd$'s is not yet bounded. For this, we modify $\cb_\cd$ for $\cd \in \mathfrak{R}$ as follows. Let $\cb'_\cd$ be some maximal subset of $\cb_\cd$ such that every two elements of $\cb'_\cd$ are at least $3K$ apart in $G$. We now obtain $\cb''_\cd$ by merging every $B \in \cb_\cd\setminus \cb'_\cd$ to a single (but arbitrary) $B' \in \cb'_\cd$ from which it has distance less than $3K$. Then $\cb''_\cd$ is a coarsening of $\cb_\cd$ and has size at most $(t-1)^3(t-2)$ by \eqref{cb}. Moreover, every $B \in \cb''_\cd$ has diameter at most $3 \cdot (6K(t-1)) + 2\cdot(3K-1) = 18tK - 12K-2$.

We can now apply Lemma~\ref{lem:MergingLemma} to each~$\cb''_\cd, \cd \in \mathfrak{R}$, with the parameters being $n :=  |\cb''_\cd| \leq |\cb_\cd| \leq 2N(t) \leq t^4$, $r := \lceil3K/2\rceil$, $d := 3K$ and $D := 18tK-12K-2 \leq 18tK-3K$. 
This merging yields a coarsening~$\cq_\cd$ of~$\cb''_\cd$ and some $L_\cd \leq \ell$ (see Figure~\ref{fig:MergingLemma:1}) such that every $A \in \cq_\cd$ has diameter at most $D_1 := nD + (n-1)(2r+nd) \leq 3t^8K+18t^5K$ (by \ref{itm:MergingLem:2}) and such that $B_G(A, L_\cd)$ is connected (by \ref{itm:MergingLem:1}, because $B_G(B, \lceil 3K/2\rceil)$ is connected for every $B \in \cb''_\cd$, and because $L_\cd \geq r = \lceil 3K/2\rceil$). Moreover, (by \ref{itm:MergingLem:3}) for all $A, A' \in \cq_\cd$
\labtequtag{distance1}{$d_{G}(A, A') \geq 2L_{\cd}+3K$.}
{$\Box$}
Set $\cq := \bigcup_{\cd \in \mathfrak{R}} \cq_\cd$, and note that $\bigcup \cq = \bigcup \cb = N_G(G^n)$.

The partition $\cq$ is our new candidate for $\cp$, and the $L_\cd$ are our candidates for the `heights' $R_A$. They would satisfy \ref{itm:FatK2t:Star} and a variant of \ref{itm:FatK2t:DistanceBtwBags:Copy} (see \eqref{distance1}), and they would satisfy a variant of \ref{itm:FatK2t:Connected:Copy} with `depths' $r_h := L_\cd$ whereby we need the whole height for connectedness, i.e.\ for all $A \in \cq_\cd$ we have that
\labtequtag{fullheightconn}{$B_G(A, L_\cd)$ is connected,}
{$1'$}
which we have proven above. 
Note that $B_G(A, L_\cd)$ would be equivalent to $\partial_h^\uparrow(R_h) \cup \partial_h^\downarrow(r_h)$
if we would set $R_h,r_h := L_\cd$ and $V_h := B_{G-G^n}(A, R_h)$.
To achieve \ref{itm:FatK2t:Connected:Copy}, we need to add a `buffer zone' of height $\ell+K$, that is, we need to increase the `height' $R_A$ for each $A \in \cq$ by $\ell+K$. This increase in height might however violate \ref{itm:FatK2t:DistanceBtwBags:Copy} even if \ref{itm:FatK2t:DistanceBtwBags:Copy} was satisfied earlier, and therefore we need to perform another round of merging, namely to merge any sets in some $\cq_\cd$ that violate \ref{itm:FatK2t:DistanceBtwBags:Copy}, i.e. which are two close together (see Figure~\ref{fig:MergingLemma:2}). This merging will ensure \ref{itm:FatK2t:DistanceBtwBags:Copy}, and \ref{itm:FatK2t:Connected:Copy} will follow from \eqref{fullheightconn}, as we will see below.

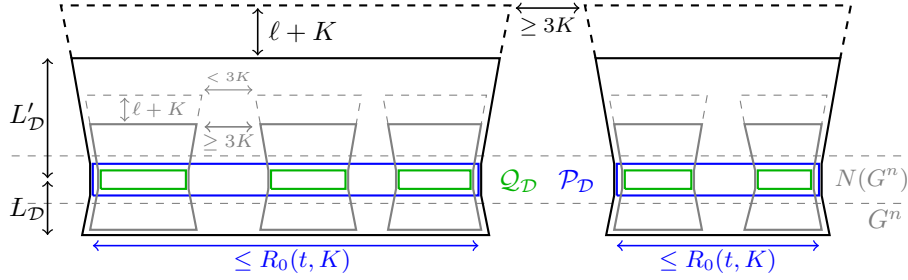
\begin{figure}[ht]
    \centering
    \begin{tikzpicture}[scale=0.35,auto=left]

\definecolor{dgreen}{rgb}{0,0.7,0}

\draw[gray,dashed] (-2.55,0) -- (31,0);
\draw[gray,dashed] (-2.55,1.8) -- (31,1.8);

\draw[blue,thick] (0.5,0.3) -- (15,0.3) -- (15,1.5) -- (0.5,1.5) -- (0.5,0.3);
\draw[dgreen,thick] (0.8,0.55) -- (4,0.55) -- (4,1.25) -- (0.8,1.25) -- (0.8,0.55);
\draw[dgreen,thick] (7.2,0.55) -- (10,0.55) -- (10,1.25) -- (7.2,1.25) -- (7.2,0.55);
\draw[dgreen,thick] (12,0.55) -- (14.7,0.55) -- (14.7,1.25) -- (12,1.25) -- (12,0.55);

\draw[blue,thick] (20.2,0.3) -- (27.8,0.3) -- (27.8,1.5) -- (20.2,1.5) -- (20.2,0.3);
\draw[dgreen,thick] (20.5,0.55) -- (23,0.55) -- (23,1.25) -- (20.5,1.25) -- (20.5,0.55);
\draw[dgreen,thick] (25.5,0.55) -- (27.5,0.55) -- (27.5,1.25) -- (25.5,1.25) -- (25.5,0.55);


\draw[gray,thick] (0.7,0.55) -- (0.7,1.25) -- (0.4,3) -- (4.4,3) -- (4.1,1.25) -- (4.1,0.55) -- (4.4,-1) -- (0.4,-1) -- (0.7,0.55);
\draw[gray,dashed] (0.4,3) -- (0.2,4.1) -- (4.6,4.1) -- (4.4,3);

\draw[gray,thick] (7.1,0.55) -- (7.1,1.25) -- (6.8,3) -- (10.4,3) -- (10.1,1.25) -- (10.1,0.55) -- (10.4,-1) -- (6.8,-1) -- (7.1,0.55);
\draw[gray,dashed] (6.8,3) -- (6.6,4.1) -- (10.6,4.1) -- (10.4,3);

\draw[gray,thick] (11.9,0.55) -- (11.9,1.25) -- (11.6,3) -- (15.1,3) -- (14.8,1.25) -- (14.8,0.55) -- (15.1,-1) -- (11.6,-1) -- (11.9,0.55);
\draw[gray,dashed] (11.6,3) -- (11.4,4.1) -- (15.3,4.1) -- (15.1,3);

\draw[gray,thick] (20.4,0.55) -- (20.4,1.25) -- (20.1,3) -- (23.4,3) -- (23.1,1.25) -- (23.1,0.55) -- (23.4,-1) -- (20.1,-1) -- (20.4,0.55);
\draw[gray,dashed] (20.1,3) -- (19.9,4.1) -- (23.6,4.1) -- (23.4,3);

\draw[gray,thick] (25.4,0.55) -- (25.4,1.25) -- (25,3) -- (27.9,3) -- (27.6,1.25) -- (27.6,0.55) -- (27.9,-1) -- (25,-1) -- (25.4,0.55);
\draw[gray,dashed] (25,3) -- (24.8,4.1) -- (28.1,4.1) -- (27.9,3);

\draw[thick] (0.4,0.3) -- (0.4,1.5) -- (-0.3,5.5) -- (15.8,5.5) -- (15.1,1.5) -- (15.1,0.3) -- (15.4,-1.2) -- (0.1,-1.2) -- (0.4,0.3);
\draw[thick,dashed] (-0.3,5.5) -- (-0.7,7.5) -- (16.2,7.5) -- (15.8,5.5);

\draw[thick] (20.1,0.3) -- (20.1,1.5) -- (19.4,5.5) -- (28.6,5.5) -- (27.9,1.5) -- (27.9,0.3) -- (28.2,-1.2) -- (19.8,-1.2) -- (20.1,0.3);
\draw[thick,dashed] (19.4,5.5) -- (19,7.5) -- (29,7.5) -- (28.6,5.5);


\draw[gray,<->] (4.7,4.3)--(6.5,4.3);
\draw[gray,<->] (1.7,3.1) -- (1.7,4);
\draw[line width=0.6pt,<->] (-1.2,0.95) -- (-1.2,5.5);
\draw[line width=0.6pt,<->] (-1.2,0.85) -- (-1.2,-1.2);
\draw[line width=0.6pt,<->] (16.4,7.4) -- (18.8,7.4);
\draw[line width=0.6pt,gray,<->] (4.7,2.9) -- (6.5,2.9);
\draw[line width=0.6pt,blue,<->] (0.5,-1.6) -- (15,-1.6);
\draw[line width=0.6pt,blue,<->] (20.2,-1.6) -- (27.8,-1.6);
\draw[line width=0.6pt,<->] (6.7,5.6) -- (6.7,7.4);

\draw[gray] (30.3,-0.6) node [] {$G^{n}$};
\draw[gray] (29.8,0.9) node [] {\small $N(G^n)$};

\draw[gray] (5.6,4.8) node [] {\tiny $< 3K$};
\draw[gray] (3,3.55) node [] {\scriptsize $\ell+K$};
\draw (8.4,6.5) node [] {$\ell+K$};

\draw (17.6,6.7) node [] {\footnotesize $\geq 3K$};

\draw[gray] (5.6,2.3) node [] {\scriptsize $\geq 3K$};

\draw (-1.95,3.3) node [] {$L'_\cd$};
\draw (-1.95,-0.2) node [] {$L_\cd$};

\draw[blue] (18.7,0.85) node [] {$\cp_\cd$};
\draw[dgreen] (16.5,0.85) node [] {$\cq_\cd$};

\draw[blue] (8,-2.2) node [] {\small $\leq R_0(t,K)$};
\draw[blue] (24,-2.2) node [] {\small $\leq R_0(t,K)$};

\end{tikzpicture}
    \vspace{-3em}
    \caption{Indicated in green is the partition $\cq_\cd$. The grey boxes around its partition classes (of height $L_\cd$) are connected and pairwise at least $3K$ far apart, but to ensure \ref{itm:FatK2t:Connected:Copy}, we need to add a `buffer zone' of height $\ell+K$ (indicated with dashed lines). These taller boxes need no longer be pairwise $3K$ far apart. Applying Lemma~\ref{lem:MergingLemma} yields a coarsening $\cp_\cd$ of $\cq_\cd$ such that the black boxes (of `depth' $L_\cd$ and height $L'_\cd$) around the (blue) partition classes of $\cp_\cd$ are connected, and such that they are still $3K$ far apart even after adding a `buffer zone' of height $\ell+K$. Moreover, the partition classes in $\cq_\cd$ have diameter at most $R_0(t,K)$.}
    \label{fig:MergingLemma:2}
\end{figure}

To perform the aforementioned merging, we now apply Lemma~\ref{lem:MergingLemma} again, to each~$\cq_\cd$ with $\cd \in \mathfrak{R}$. More precisely, we apply Lemma~\ref{lem:MergingLemma} to $\cq_\cd$ in the subgraph $G-G^{n}$ with $n' := |\cq_\cd| \leq 2N(t)$, $r' := \ell+2K$, $d' := 4 \ell+5K$ and $D' := D_1$. 
This yields a coarsening~$\cp_\cd$ of~$\cq_\cd$ and some $L'_\cd \leq L'(t,K)-\ell-K$ with $L'_\cd \geq r'$ (see Figure~\ref{fig:MergingLemma:2}). 
This new $L'_\cd$ is the `height' that we need to ensure connectedness as in \eqref{fullheightconn} (or in \ref{itm:FatK2t:Connected:Copy}), i.e.\ for all $A \in \cp_\cd$ it follows by \eqref{fullheightconn} and Lemma~\ref{lem:MergingLemma}~\ref{itm:MergingLem:1} that
\labtequtag{fullheightconn2}{$B_{G-G^n}(A, L'_\cd) \cup B_G(A, L_\cd)$ is connected.}
{$1''$}
Moreover, by Lemma~\ref{lem:MergingLemma} \ref{itm:MergingLem:3}, for every $A \neq B \in \cp_\cd$,
\labtequtag{distance}{$d_{G-G^n}(A, A') \geq 2L'_{\cd}+4 \ell+5K$.}
{$\triangle$}

Setting $\cp := \bigcup_{\cd \in \mathfrak{R}} \cp_\cd$, we have defined our desired partition of~$N_G(G^n)$. Note that $\cp$ is a refinement of $\cgr$ and a coarsening of $\cb$. Moreover, every $\cp_\cd$ is a coarsening of $\cb''_\cd$, and hence of $\cb_\cd$. Since $\bigcup \cb_\cd = \bigcup_{C \in \cd} \partial_G C$, every $A \in \cp$ is contained in the union of the boundaries of components in some $\cd_A \in \mathfrak{R}$.

For every $A \in \cp$, we set $R_A := L'_{\cd_A}+\ell+K$ and $r_A := L_{\cd_A}$. Note that $2\ell + 3K = r'+\ell+K \leq R_A \leq L'(t,K)$ and $0 < r \leq r_A \leq \ell$. This completes the construction at step $n+1$. 

Let us note that, as promised in the description of our construction in the beginning, the partition classes $V_A$ for $A \in \cp$ are pairwise disjoint and not joined by edges. Indeed, if $A \neq A' \in \cp$ do not meet the same component of $G-G^n$, then this is immediate. Otherwise, $A,A'$ are both contained in the same $\cp_\cd$, and hence this follows from \eqref{distance}.
\medskip

It remains to check that every $A \in \cp$ has diameter at most $R_0(t,K)$ and that $\ca$ and the $R_A, r_A$ satisfy \ref{itm:FatK2t:Connected:Copy} to \ref{itm:FatK2t:Star}. 
For every $A \in \ca$ we have\footnote{We remark that this is the (only) inequality that $R_0(t,K)$ needs to satisfy. Since $n', D', r'$ and $d'$ depend only on $t,K$ and $\ell$ (which in turn depends only on $t$ and $K$ as $\ell := L(t,K)$), it suffices to choose $R_0(t,K)$ large in comparison to $t$ and $K$.} \footnote{We used here that $n' \leq 2N(t) \leq t^4$ and $r' := \ell+2K \leq 3t^4K$ and $d' := 4\ell+5K\leq 12t^4K$ and $D':=D_1 \leq 3t^8K+18t^5K$ (where we used for $r'$ and $d'$ that $N(t) \leq t^4-2$).} 
\[
    \diam_G(A) \leq n'D' + (n'-1)(2r'+n'd') \leq R_0(t,K)
\]
by Lemma~\ref{lem:MergingLemma}~\ref{itm:MergingLem:2}.
\smallskip

To prove \ref{itm:FatK2t:Connected:Copy}, let $A \in \cp$ and $h:=h_A$.
By the choice of $R_h, r_h$ we have $R_h = L'_{\cd_A}+\ell+K$ and $r_h = L_{\cd_A}$, and hence \ref{itm:FatK2t:Connected:Copy} follows from \eqref{fullheightconn2}.
\smallskip

To prove \ref{itm:FatK2t:Star}, let $C$ be a component of $G-G^n$. Since $\cp_\cd$ is a coarsening of $\cb_\cd$ and $\cp$ is the union over all $\cp_\cd$ with $\cd \in \mathfrak{R}$, there are at most $m_C$ elements of $\cp$ that meet $C$. By the definition of the new partition classes $V_{h_A}$ as $B_{G-G^n}(A, R_A)$, it follows that at most $m_C$ partition classes of $\ch^{n+1}$ meet $C$. Since $m_C \leq t-1$ as shown earlier, this concludes the proof of \ref{itm:FatK2t:Star}.
\smallskip

To prove \ref{itm:FatK2t:DistanceBtwBags:Copy}, let $g \neq h \in V(H^{n+1})$ be non-adjacent. By \ref{itm:FatK2t:DistanceBtwBags:Copy} of $H^n$, it suffices to consider the case where $g \in L^{n+1} = V(H^{n+1}-H^{n})$.
If $h \in V(H^{n-1})$, then 
\begin{equation} \label{eq:a}
\begin{aligned}
d_G(V_g, V_h) &\geq d_G(G-G^{n}, G^{n-1}) \geq \min\{R_{h'} : h' \in L^n\}\\ 
&\geq 2 \ell + 3K \geq 2\cdot\max\{r_g,r_h\} + 3K.
\end{aligned} \tag{a}
\end{equation}
where the second inequality holds by the definition of the $V_{h'}$, the third inequality holds because $R_{h'} \geq 2\ell+3K$, and the last inequality holds because $r_g, r_h \leq \ell$.

Now assume $h \in L^n = V(H^n-H^{n-1})$, and let $P$ be a $V_g$--$V_h$ path in $G$ of length $d_G(V_g,V_h)$. As $gh \notin E(H^{n+1})$, and the partition classes of nodes in $L^{n+1}$ are disjoint and not joined by an edge, $P$ meets either $G-G^{n+1}$ or it meets a bag $V_{h'}$ of some $h' \neq h \in L^n$ (see Figure~\ref{fig:ProofOf2}). In the former case, we obtain $d_G(V_g, V_h) \geq d_G(G-G^{n+1}, G^{n}) \geq 2\cdot\max\{r_g,r_h\} + 3K$ by the same argument as in \eqref{eq:a}. 
In the latter case, since the partition classes of nodes in $L^{n}$ are disjoint and not joined by an edge, $P$ has to meet either $G^{n-1}$, and we are done as before, or $P$ meets a bag~$V_{g'}$ of some $g' \neq g \in L^{n+1}$ (see Figure~\ref{fig:ProofOf2}). 
Then $d_G(V_g, V_h) \geq \max\{d_G(V_g,V_{g'}), d_G(V_h, V_{h'})\} \geq 2\cdot\max\{r_g, r_{g'}, r_h, r_{h'}\} + 3K$ by \ref{itm:FatK2t:DistanceBtwBags:Copy}, once we have proved that \ref{itm:FatK2t:DistanceBtwBags:Copy} holds for $g, g' \in L^{n+1}$. 

\begin{figure}[ht]
    \centering
    \begin{tikzpicture}[scale=0.4]

\draw[gray,dashed] (0,0) -- (21,0);
\draw[gray,dashed] (0,5) -- (21,5);
\draw[gray,dashed] (0,10) -- (21,10);

\draw[gray] (0,0.5) -- (2,0.5) -- (3,4.5) -- (0,4.5);
\draw[thick] (5,0.5) -- (9,0.5) -- (10,4.5) -- (4,4.5) -- (5,0.5);
\draw[gray,thick] (12,0.5) -- (15,0.5) -- (16,4.5) -- (11,4.5) -- (12,0.5);
\draw[gray] (19,0.5) -- (18,0.5) -- (17,4.5) -- (19,4.5);

\draw[gray] (1.5,5.5) -- (4.5,5.5) -- (5.5,9.5) -- (0.5,9.5) -- (1.5,5.5);
\draw[gray,thick] (7.5,5.5) -- (10.5,5.5) -- (11.5,9.5) -- (6.5,9.5) -- (7.5,5.5);
\draw[thick] (13.5,5.5) -- (17.5,5.5) -- (18.5,9.5) -- (12.5,9.5) -- (13.5,5.5);

\draw [cyan, thick] plot [smooth, tension=0.3] coordinates {(14.5,6) (14,4) (12.5,4) (10.2,6) (9,6) (8,4)};
\draw[fill,cyan] (14.5,6) circle (.125);
\draw[fill,cyan] (8,4) circle (.125);

\draw [blue, thick] plot [smooth, tension=0.3] coordinates {(14.5,9) (13.5,10.5) (4.5,10.5) (4,6) (5,4)};
\draw[fill,blue] (14.5,9) circle (.125);
\draw[fill,blue] (5,4) circle (.125);

\draw [blue, thick] plot [smooth, tension=0.3] coordinates {(16,6) (14.25,-0.3) (9,-0.5) (8.5,1)};
\draw[fill,blue] (16,6) circle (.125);
\draw[fill,blue] (8.5,1) circle (.125);

\draw (16,9) -- (16.5,10.5);
\draw[fill,black] (16,9) circle (.08);
\draw[fill,black] (16.5,10.5) circle (.08);

\draw (17.2,9) -- (18,10.5);
\draw[fill,black] (17.2,9) circle (.08);
\draw[fill,black] (18,10.5) circle (.08);

\draw (17,6) -- (18,4);
\draw[fill,black] (17,6) circle (.08);
\draw[fill,black] (18,4) circle (.08);

\draw (6,1) -- (5.3,-0.5);
\draw[fill,black] (6,1) circle (.08);
\draw[fill,black] (5.3,-0.5) circle (.08);

\draw (7.2,1) -- (7.2,-0.5);
\draw[fill,black] (7.2,1) circle (.08);
\draw[fill,black] (7.2,-0.5) circle (.08);

\draw (7,2.5) node {\large $V_h$};
\draw (15.5,7.5) node {\large $V_g$};
\draw[gray] (13.5,2.5) node {\large $V_{h'}$};
\draw[gray] (9,7.5) node {\large $V_{g'}$};

\draw[cyan] (11.8,5.5) node {\large $P$};
\draw[blue] (14.8,10.5) node {\large $P$};
\draw[blue] (11,0.1) node {\large $P$};

\draw[gray] (20,-0.6) node {\large $G^{n-1}$};
\draw[gray] (20,4.4) node {\large $G^{n}$};
\draw[gray] (20,9.4) node {\large $G^{n+1}$};

\end{tikzpicture}

    
    \vspace{-2em}
    \caption{Depicted is the case where $g \in L^{n+1}$ and $h \in L^n$. The light blue path meets both $V_{h'}$ and $V_{g'}$. The dark blue paths meet either $G-G^{n+1}$ or $G^{n-1}$.}
    \label{fig:ProofOf2}
\end{figure}
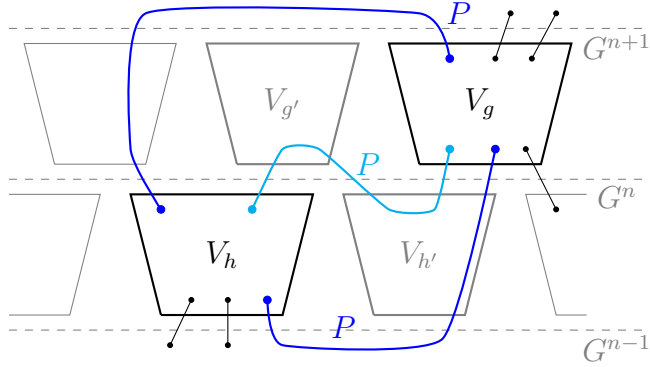

Hence, it remains to consider the case where $g \neq h \in L^{n+1}$, i.e.\ $g = h_A$ and $h = h_B$ for some $A, B \in \cp$.
Let us first assume that $\cd_A \neq \cd_B$. 
If at least one of $\cd_A, \cd_B$ is of the form $\{C\}$ for some $C \in \cc(G-G^n)$, then one of $V_{h_A}$, $V_{h_B}$ is contained in $C$ and can be separated in $G$ from the other one by the partition class $V_x$ of the (unique) node $x\in L^n$ with $N_G(C) \subseteq V_x$.
Otherwise, $V_{h_A}, V_{h_B}$ are contained in distinct components $Z_A, Z_B$ of $G-G^{n-1}$. 
Hence, any $V_{h_A}$--$V_{h_B}$ path meets $G^{n-1}$, and so\\ $d_G(V_{h_A},V_{h_B}) \geq d_G(G-G^n, G^{n-1}) \geq 2\cdot\max\{r_{h_A}, r_{h_B}\} + 3K$ as in \eqref{eq:a}.

Thus, we may assume $\cd_A = \cd_B$. 
Let~$P$ be a $V_{h_A}$--$V_{h_B}$ path in~$G$ of length $d_G(V_{h_A}, V_{h_B})$.
If $P$ has a subpath that lies in $G-G^{n}$ and starts in $V_{h_A}$ and ends in $V_{h_{B'}}$ for some $B' \neq A \in \cp$ with $\cd_{B'} = \cd_A$, then
\begin{align*}
d_G(V_{h_A}, V_{h_B}) &\geq d_{G-G^n}(V_{h_A}, V_{h_{B'}}) \geq d_{G-G^n}(A,B') - R_{h_A} - R_{h_{B'}}\\
\intertext{and hence, by \eqref{distance} and the definition of $R_{h_A}, R_{h_B}$,}
d_G(V_{h_A}, V_{h_B}) &\geq (2L'_{\cd_A} + 4 \ell + 5K) - 2\cdot (L'_{\cd_A} + \ell + K)\\ 
&\geq 2 \ell + 3K \geq 2\cdot \max\{r_{h_A}, r_{h_B}\} + 3K.
\end{align*}
Otherwise, the path $P$ has a subpath that starts in $A$ and ends in $B'$ for some $B' \in \cp$. If $\cd_{B'} \neq \cd_A$, then we are done as in the previous case where $\cd_A \neq \cd_B$, so we may assume $\cd_{B'} =\cd_A$. 
Then by \eqref{distance1}
\[
d_G(V_{h_A}, V_{h_B}) \geq d_G(A, B') \geq 2L_{\cd_A} + 3K = 2\cdot\max\{r_{h_A}, r_{h_B}\} + 3K.
\]
This establishes \ref{itm:FatK2t:DistanceBtwBags:Copy} and hence concludes the proof. 
\qed

\section{The approximation algorithm} \label{sec:algo}

Note that our proof of \Lr{lem:ObtainingaGPwithNiceProperties} is constructive (and so are any lemmas it relies on), and therefore we will be able to turn it into an algorithm that approximates, to a constant factor, the optimal distortion \defi{$\alpha_t(G)$} of any embedding of a finite graph \g into a $K_{2,t}$-minor-free graph in polynomial time, thereby proving \Cr{cor:algo}:

\begin{proof}[Proof of \Cr{cor:algo}] 
Let $n:=|V(G)|$. For each $K=1,2,\ldots n$, our algorithm attempts to carry out the construction of $H$ and the $H$-partition of \g as in the proof of \Lr{lem:ObtainingaGPwithNiceProperties}, without knowing in advance whether \g has a $K$-fat $K_{2,t}$~minor. Note that the only occasions in that proof where we used the assumption that \g has no such minor were when invoking Lemmas~\ref{lem:NearComps:NumberAndRadius} and~\ref{lem:FatK2t:Components}. Thus, either the attempt will output such an $H$, or one of these calls to the aforementioned Lemmas will return a $K$-fat $K_{2,t}$ minor model in $G$, in which case we say that the attempt failed.
In the former case, where our algorithm constructs a graph $H$ and an $H$-partition of~$G$, it then checks whether $H$ is $K_{2,t}$-minor-free (which can be done in polynomial time~\cite{GMXIII}). If $H$ is $K_{2,t}$-minor-free, then we say that the attempt was \defi{successful}. If not, then the attempt failed, and invoking Corollary~\ref{cor:FirstLemma:PolyTimeAlgo} again returns a $K$-fat model of $K_{2,t}$ in $G$. 

Our algorithm returns the smallest value $K_{\min}$ of $K\leq n$ for which this procedure succeeds as an approximate value for $\alpha_t(G)$. Note that $K_{\min}$ exists since $G$ cannot have a $n$-fat $K_{2,0}$ minor. Along with $K_{\min}$, the algorithm can return a witness: we start with the graph $H$ and the embedding of \g into~$H$, defined by mapping each $v\in V(G)$ into its partition class $V_h\ni v$, and then modify $H$ and the embedding using the star trick mentioned before the statement of \Cr{cor:algo} to eliminate the additive error. 

We claim that $K_{\min}$ is within a constant factor of $\alpha_t(G)$. Indeed, our \Tr{thm:FatK2t} (and the remark thereafter) guarantees that the multiplicative distortion of \g into $H$, which is $K_{2,t}$-minor-free by definition, is at most $C\cdot K_{\min}$ for a universal constant $C$. If $K_{\min}>1$ then our procedure failed for $K=K_{\min}-1$, and therefore as mentioned above it will identify a $(K-1)$-fat $K_{2,t}$ minor model $M$ in \G. It is not hard to see that such a model implies that $\alpha_t(G)$ is at least $c\cdot (K_{\min}-1)$ for a small universal constant $c$ \cite[Proposition 3]{CDNRV} (the precise value of which depends on the convention chosen in the definition of multiplicative distortion). Our algorithm outputs $M$ as a witness for this lower bound on $\alpha_t(G)$. If on the other hand $K_{\min}=1$, then as above we deduce that $\alpha_t(G)\leq C$, and so we do not need a lower bound or a witness, as we can use the trivial bound $\alpha_t(G)\geq 1$. 
\end{proof}

Both the running time of our algorithm, and the approximation constant we obtained, increase with $t$. We do not know to what extent this is necessary. 

\medskip
Our \Cr{cor:algo}, along with analogous results of \cite{ABFNPT}, and remarks of \cite{CDNRV}, motivates the following problem related to the coarse Menger conjecture of \cite{AHJKW,GeoPapMin}.  Given a finite graph $G$, and $S,T \subset V(G)$, and $n\in \N$,  let \defi{$MM_n(G,S,T)$} denote the maximum $K\in \N$  such that there is an $n$-tuple of $S$--$T$ paths in $G$ pairwise at distance at least $K$.

\begin{problem}
    Is it true that for every $n\geq 2$, there are universal constants $C,c>1$, such that:
    \begin{enumerate}[label=\rm{(\roman*)}]
        \item \label{prob i} there is an efficient algorithm that, given $G,S,T$ as above, approximates $MM_n(G,S,T)$ up to a multiplicative factor of $C$; and
        \item \label{prob ii} approximating $MM_n(G,S,T)$ up to a multiplicative factor of $c$ is NP-hard.
    \end{enumerate}
\end{problem}

We remark that the results of \cite{AHJKW,GeoPapMin} that the coarse Menger conjecture is true for $n= 2$ imply that \ref{prob i} holds for $n=2$: the algorithm can output the smallest radius of a ball in \g separating $S$ from $T$. This trivially lower-bounds $MM_2(G,S,T)$, and the aforementioned result states that it is also an upper bound up to a universal constant $C$.

If we require the exact rather than an approximate value for $MM_n(G,S,T)$, the problem is NP-hard as proved by Balig\'acs and MacManus\cite{BalMcMmet}.

We do not know whether the analogue of \ref{prob ii} holds for $\alpha_t(G)$.

\bibliographystyle{plain}
\bibliography{collective}

\end{document}